\tikzstyle{hackennode}=[draw,circle,inner sep=0,minimum size=4pt]
\tikzstyle{hackenline}=[line width=2pt]
\newtheorem{theorem}{Theorem}
\newtheorem{lemma}[theorem]{Lemma}
\newtheorem{prop}[theorem]{Proposition}
\newtheorem{cor}[theorem]{Corollary}
\newtheorem{remark}[theorem]{Remark}
\theoremstyle{definition}
\newtheorem{definition}[theorem]{Definition}
\newtheorem{example}[theorem]{Example}
\newcommand{\Z}{\mathbb{Z}}
\newcommand{\X}{\mathbb{X}}
\newcommand{\F}{\mathbb{F}}
\newcommand{\N}{\mathbb{N}}
\newcommand{\Hb}{\mathbb{H^+}}
\newcommand{\Hp}{\mathbb{H}_{\text{red}}(\Gamma)}
\title{Surgery Obstructions to Seifert Fibered Homology Spheres}
\author{Claire Zajaczkowski}
\begin{document}
\maketitle

\begin{abstract}
We examine surgeries on knots in $S^3$ to find surgery obstructions to Seifert fibered integral homology spheres. We find such obstructions by using Heegaard Floer and Knot Floer homology and the mapping cone formula for computing Heegaard Floer homology of surgery on a knot. Here however, we take a novel approach and use the toroidal structure, or the number of singular fibers of a homology sphere, to find obstructions. This approach allows us to show that genus one knots cannot yield Seifert fibered integral homology spheres with five or more singular fibers. Some other obstructions are also presented for higher genus knots. 
\end{abstract}

\section{Introduction} \label{introduction}
\subsection{Background}
Consider a knot $k$ in a closed oriented 3-manifold $M$. We can perform Dehn surgery on $k$ in $M$ by cutting $M$ open along the neighborhood of our knot, which is isomorphic to a solid torus, and gluing back in $D^2 \times S^1$ \cite{SavYellow}. This gluing is entirely determined by sending the meridian of $D^2 \times S^1$ to some simple closed curve, $pm + ql$, given by $p$ times the meridian and $q$ times the longitude of the knot exterior. This is called $\nicefrac{p}{q}$ surgery on the knot $k$. The space resulting from this construction is a closed oriented manifold, generally different from $M$. For this paper we will only be considering surgery on knots in $S^3$. Lickorish and Wallace showed that every closed orientable 3-manifold can be obtained by integral surgery on a link in $S^3$ \cite{Lickorish,Wallace}. Due to this result, Dehn surgery has become a fundamental method of representing 3-manifolds \cite{BoyerDS}. It is then natural to ask which manifolds can be represented by surgery on a knot in $S^3$, as opposed to a link.\par

Knots can be partitioned into three categories: torus, satellite and hyperbolic. Many surgery problems are understood for torus and satellite knots. For example, Moser completely classified surgery on torus knots in $S^3$ in \cite{MoserTK}, and Gabai was able to first prove the Property $P$ conjecture for satellite knots in \cite{Gabai}. (The Property $P$ conjectures has since been completely proved by Kronheimer and Mrowka \cite{PropP}.)  Since surgeries on torus and satellite knots are well understood, the most interesting surgeries to consider are those on hyperbolic knots. Surgeries on hyperbolic knots that do not yield hyperbolic manifolds are called exceptional. Exceptional surgeries are reducible, toroidal, or Seifert fibered. Thurston's hyperbolic Dehn surgery theorem says that there are only finitely many exceptional surgery slopes on a hyperbolic knot \cite{Thur}. We call Dehn surgery Seifert fibered, toroidal or reducible if it yields a Seifert fibered, toroidal or reducible manifold respectively \cite{TSFmont}. \par 

Since exceptional surgeries have remained the most elusive, there has been much work put into their study. Dean introduced a condition on knots in $S^3$ that guarantees a hyperbolic surgery \cite{DeansmallSF}, while Eudave-Mu\~noz extended this to include surgeries producing Seifert fibered manifolds with a projective plane orbit surface and two exceptional fibers \cite{EudMun}. In this same paper Eudave-Mu\~noz finds a collection of hyperbolic knots that yield toroidal Seifert fibered manifolds \cite[Proposition 4.5]{EudMun}. It can be checked that these knots will never yield Seifert fibered integral homology 3 spheres. Teragaito showed that any positive integers can arise as the toroidal surgery slope of a hyperbolic knot in \cite{TeraToroidal}, and Wu classified toroidal surgeries on length 3 Montesinos knots \cite{Wu1}.  For further general discussions of exceptional surgeries we suggest \cite{BritWu, IchiEx, Motegi}.\par

We are particularly interested in toroidal Seifert fibered integral homology 3-sphere surgeries, so let us explore these specifics further. Gordon and Luecke showed that the denominator of toroidal surgery slope is at most 2 for hyperbolic knots in \cite{GordonLueckeEssTori}, and Miyazaki and Motegi built on this result to show that if $K$ is a hyperbolic, periodic knot with period 2, only an integer coefficient can yield a toroidal surgery\cite{MiyMot}. Boyer and Zhang proved that toroidal Seifert fibered spaces cannot arise by non-integer surgery on  a hyperbolic knots in $S^3$, a result also implied by the work of Gordon and Luecke \cite{BZ, GL}.  Ichihara and Jong showed there is no toroidal Seifert fibered surgery on pretzel knots except for the trefoil in \cite{TSFmont}. Ozsv\'{a}th and Szab\'o showed that the family of Kinoshita-Terasaka knots $KR_{r,n}$ with $| r | \geq 2$ and $n \neq 0$ cannot yield integral Seifert fibered homology 3-spheres \cite{OSSurg}. Wu found the only three large arborescent knots that yield exceptional toroidal surgeries in \cite{WuExcep}. Wu also found the Montesinos knots that yield toroidal Seifert fibered surgeries in \cite{WuMont}. 

In this paper our goal is to find obstructions to which knots in $S^3$ yield integral Seifert fibered integral homology spheres. Seifert fibered, integral homology spheres are a result of exceptional surgery \cite{SavYellow}. One of the main tools used is Heegaard Floer homology, and the knot invariant: knot Floer homology. Heegaard Floer Homology is an invariant of closed 3 manifolds, introduced by Ozsv\'{a}th and Szab\'{o}. This invariant is isomorphic to Seiberg-Witten Floer Homology \cite{OSintro}. Ozsv\'{a}th and Szab\'{o} and many others have used Heegaard Floer homology to find such obstructions \cite{OSSurg, WuMont, WuExcep}. Here, however we use the number of singular fibers of a Seifert fibered integral homology sphere to find obstructions. This is using the toroidal structure of the manifold, as Seifert fibered homology spheres are toroidal if and only if they have 4 or more singular fibers. \par 

The other tools used in this paper are N\'{e}methi's graded root \cite{NemGR}, and Ozsv\'{a}th and Szab\'{o}'s mapping cone formula \cite{OSMC}. The graded root is a combinatorial object that allows us to compute Heegaard Floer homology more easily. This always exists when looking at Heegaard floer homology of Seifert fibered homology spheres, making it a very useful tool for our purposes. The mapping cone formula is used to make the Heegaard Floer homology of a manifold resulting from surgery more computible \cite{GainMC}. \par

Our general strategy is to examine the reduced Heegaard Floer homologies of both integral homology spheres and surgeries on knots. Specifically we observe how each of these behave under the U-action. We use Nem\'{e}thi's graded root to analyze this for Seifert fibered integral homology spheres and the mapping cone formula to analyze this for surgeries on knots. Then we compare these results in order to establish obstructions to surgeries on knots yielding Seifert Fibered integral homology spheres. \par

\subsection{Main Results}

Before we give our obstructions, we must introduce some notation. $HF^+(Y)$ is the Heegaard Floer homology of a 3-manifold $Y$; $HF_{red}(Y)$ is the reduced Heegaard Floer homology of $Y$. Also for the remainder of this paper we will only consider manifolds $Y$ that have negative definite plumbing. Our most general result is the following. 

\begin{restatable}[]{theorem}{HGres}
No surgery on a knot $K$ in $S^3$ of genus $g$ can yield a Seifert fibered integral  homology sphere with $l$ or more singular fibers if $g \leq \frac{24l - 103}{90}.$ 
\label{HGres}
\end{restatable}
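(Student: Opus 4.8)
The plan is to isolate an invariant of the $\Z[U]$-module structure of $HF^+$ that is forced to be small for $\pm1/q$-surgery on a low-genus knot but large for a Seifert fibered integral homology sphere with many singular fibers, and then to compare the two bounds. The right choice is the maximal $U$-torsion order $T(Y):=\min\{n\geq 0:U^n\cdot HF^+_{\mathrm{red}}(Y)=0\}$ --- equivalently, for a manifold carrying a graded root, the largest height of a leaf above its branch point. One uses $T$ rather than $\mathrm{rk}_{\mathbb{F}}HF^+_{\mathrm{red}}$ because the rank is not genus-controlled: a genus-one knot may have arbitrarily large $\widehat{HFK}$, hence arbitrarily large $HF^+_{\mathrm{red}}$ of its surgeries, whereas $T$ is controlled.

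\emph{Surgery side.} A Dehn surgery on a knot in $S^3$ produces an integral homology sphere only for slopes $\pm1/q$; treat $\pm1$ first, the general case being an analogous, bulkier, mapping-cone computation. By Ozsv\'ath--Szab\'o's mapping cone formula, $HF^+(S^3_{\pm1}(K))$ is the homology of the cone on $\bigoplus_s A^+_s\to\bigoplus_s B^+_s$ with $B^+_s\cong\mathcal{T}^+$ and connecting maps $v_s,h_s$. Since $v_s$ is an isomorphism for $s\geq g$ and $h_s$ for $s\leq -g$, cancel down to a finite complex involving only the $A^+_s,B^+_s$ with $|s|<g$. On the towers, $v_s$ and $h_s$ are multiplication by $U^{V_s}$ and $U^{H_s}$ with these exponents bounded in terms of $g$ on the relevant range of $s$ (at most $1$ when $g=1$), so the tower parts assemble into a graded-root-like object of bounded height; the reduced parts $A^{\mathrm{red}}_s$ add summands whose $U$-torsion orders are themselves bounded in terms of $g$, via the width $2g$ of $CFK^\infty$. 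The outcome is an explicit bound $T(S^3_{\pm1}(K))\leq\phi(g)$ with $\phi$ linear in $g$; notably $\phi(1)=1$, so $U\cdot HF^+_{\mathrm{red}}(S^3_{\pm1}(K))=0$ when $g(K)=1$.

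\emph{Seifert side.} Use N\'emethi's graded root of a negative-definite Seifert fibered homology sphere $\Sigma(a_1,\dots,a_l)$: leaves are the local minima of the tau-function $\tau_\Sigma:\Z_{\geq0}\to\Z$, and the height of a leaf above its branch point is the drop of $\tau_\Sigma$ into the corresponding valley. The crux is a lower bound on $T(\Sigma)$, i.e.\ on the deepest such drop, uniform over all choices of $l$ pairwise coprime multiplicities $a_i\geq 2$ (with orbifold Euler number $e$ satisfying $e\cdot a_1\cdots a_l=-1$). Concretely one estimates the increments $\tau_\Sigma(n+1)-\tau_\Sigma(n)=1+ne-\sum_i\lceil n\omega_i/a_i\rceil$ over the first few ``periods'' of the fractional parts $\{n\omega_i/a_i\}$, showing that as $l$ grows these increments are forced to alternate between large positive and large negative values, so $\tau_\Sigma$ must descend by at least a linear-in-$l$ amount before becoming eventually increasing; optimizing over the admissible data (worst case: consecutive small primes) gives $T(\Sigma(a_1,\dots,a_l))\geq\psi(l)$ with $\psi$ linear in $l$, in particular $\psi(l)\geq 2$ once $l\geq 5$.

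Imposing $\psi(l)>\phi(g)$ carves out exactly the range $g\leq\frac{24l-103}{90}$ in which the two bounds are incompatible, which proves the theorem; the hands-on $g=1$ analysis above then upgrades the genus-one case to the sharp threshold $l=5$. I expect the Seifert-side estimate to be the main obstacle: getting a clean linear-in-$l$ lower bound on the depth of $\tau_\Sigma$'s deepest valley \emph{uniformly} over all Seifert data --- not just Brieskorn spheres on small primes --- requires controlling the interaction of the $l$ fractional-part sequences $\{n\omega_i/a_i\}$, and the slack there is presumably why the constants $24,90,103$ are far from optimal. A secondary technical point is making the reduction from $\pm1/q$- to $\pm1$-surgery fully rigorous.
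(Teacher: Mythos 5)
Your overall architecture is the same as the paper's: bound the $U$-torsion order of $HF_{red}$ from above for $1/n$-surgeries in terms of $g$, from below for Seifert fibered homology spheres in terms of $l$, and compare. But as written the proposal does not prove either quantitative input, and the one you yourself flag as ``the main obstacle'' is exactly the content of the theorem. On the Seifert side, the paper's Lemma~\ref{kcond} is not obtained by a soft ``the increments must alternate between large positive and large negative values'' argument; it comes from a specific evaluation: at $n=kp_1\cdots p_l$ every ceiling in $\Delta(n)=1+|e_0|n-\sum_i\lceil np_i'/p_i\rceil$ is exact, and the Diophantine relation gives $\Delta(n)=k+1$ on the nose; the symmetry $\Delta(n)=-\Delta(N_0-n)$ then supplies the matching $-(k+1)$, and the hypothesis $k<\frac12\left(l-2-\sum_i 1/p_i\right)$ is precisely the condition $n<N_0-n$ ensuring the positive value occurs first, so Lemma~\ref{DeltaCond} applies. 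Your sketch contains no substitute for this computation, so the lower bound $\psi(l)$ is asserted rather than proved. Moreover your quantitative intuition there is off: the uniform linear bound does \emph{not} give $\psi(l)\geq 2$ at $l=5$ (for $\Sigma(2,3,5,7,11)$ one has $\frac12\left(3-\sum 1/p_i\right)<1$), which is exactly why the paper needs the long case analysis of Section~\ref{FiveSingularFibers} for five fibers; this does not affect Theorem~\ref{HGres} itself, but it shows the ``linear-in-$l$ valley depth'' heuristic cannot be taken for granted.

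On the surgery side the gap is smaller but still real: the stated constants $\frac{24l-103}{90}$ require the specific bound $U^{g+\lceil g_4/2\rceil}\cdot HF_{red}(S^3_{p/q}(K))=0$ (Gainullin), combined with $g_4\leq g$ to get the threshold $\frac{3g+1}{2}$, and with the worst-case estimate $\sum 1/p_i<\frac12+\frac13+\frac{l-2}{5}$. Your unspecified linear $\phi(g)$, derived by a mapping-cone truncation you only sketch (and only for slope $\pm1$, deferring $1/q$), would yield a different inequality unless it reproduces exactly this constant; nothing in the proposal pins it down. Note also that citing Gainullin's theorem removes the need for any $\pm1$-versus-$1/q$ reduction, since it holds for all slopes at once. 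So the proposal identifies the right invariant and the right comparison, but both pillars of the argument remain to be supplied.
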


The proof of this is easy to follow given the following general result, which is proved in Section 2. 

\begin{restatable}[]{lemma}{kcond}
Let $Y = \Sigma(p_1, p_2, \dots, p_l).$ Then if $$0 \leq k < \dfrac{1}{2}\left( l - 2 - \displaystyle \sum_{i=1}^l\dfrac{1}{p_i}\right),$$
$$U^k \cdot HF_{red}(Y) \neq 0.$$
\label{kcond}
\end{restatable}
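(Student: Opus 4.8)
The plan is to analyze the graded root of $Y = \Sigma(p_1,\dots,p_l)$ and extract a lower bound on how deep the $U$-tower structure reaches into $HF_{red}(Y)$. Recall that for a Seifert fibered homology sphere with negative-definite plumbing, N\'emethi's construction gives a tau-function $\tau\colon \Z_{\geq 0}\to\Z$ (a normalized version of the Riemann--Roch / delta sequence associated to the plumbing), whose graph determines the graded root, and hence $HF^+(Y)$ together with its $U$-module structure. The key point is that the number of ``local minima'' of $\tau$ (equivalently, the number of leaves of the graded root minus one, or the rank-generating behavior of $HF_{red}$) and the spacing between consecutive critical values control the minimal power $U^k$ that annihilates $HF_{red}(Y)$. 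Concretely, if $\tau$ has a local maximum of height $h$ sitting between two local minima, then $HF_{red}$ carries a summand on which $U$ acts nontrivially up to order roughly $h$; so I need to produce a single local maximum of $\tau$ whose ``depth'' below it on both sides exceeds the claimed bound $\tfrac12(l-2-\sum 1/p_i)$.

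First I would write down $\tau$ explicitly: for $\Sigma(p_1,\dots,p_l)$ with $p=\prod p_i$ and Seifert invariants normalized appropriately, $\tau(n+1)-\tau(n) = 1 - \#\{\text{something}\} $, or more usefully the second-difference/increment $\Delta(n) = \tau(n+1)-\tau(n)$ is given by a counting function $1 - a(n)$ where $a(n)$ counts lattice points / the number of $i$ with a certain congruence condition, following N\'emethi (e.g.\ $\Delta(n) = 1 - \#\{ i : \text{(divisibility condition on } n\text{)}\} $ or the analogous Dolgachev--Pinkham--Neumann formula). Summing $\Delta$ from $0$ up to the first index where it turns around gives the height of the first local maximum. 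The arithmetic heart is the estimate: as $n$ ranges over an interval of length comparable to $p$, the average value of $a(n)$ is $\sum 1/p_i$, so $\sum \Delta \approx (1 - \sum 1/p_i)\cdot(\text{length})/\,$ something, and one picks the interval length so that the accumulated height is at least $\tfrac12(l - 2 - \sum 1/p_i)$. Then I would invoke the standard dictionary (graded root $\to$ $HF^+$): a local maximum of $\tau$ at height $m$ above the neighboring minima forces $U^{m-1}\cdot HF_{red}(Y)\neq 0$, so with $m-1 \geq \lfloor \tfrac12(l-2-\sum 1/p_i)\rfloor$ we are done, giving $U^k\cdot HF_{red}(Y)\neq 0$ for all $k < \tfrac12(l-2-\sum 1/p_i)$.

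I expect the main obstacle to be the arithmetic estimate controlling $\tau$: one must choose the right truncation point $n_0$ and show that $\tau(n_0)$ (measured from the baseline of the two flanking minima) is at least the stated quantity, \emph{uniformly} over all choices of pairwise coprime $p_1,\dots,p_l$. The danger is that cancellation between the $+1$ terms and the counting terms $a(n)$ could be worse than the naive average suggests for adversarial $p_i$ (e.g.\ when one $p_i$ is small); handling this likely requires looking at $\tau$ on the interval $[0, p_i)$ for the \emph{largest} $p_i$, or a Chinese Remainder Theorem argument to show the count $a(n)$ is ``equidistributed enough'' that no long run of $a(n)\geq 2$ can eat up the accumulated height. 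A secondary, more bookkeeping-level obstacle is making sure the normalization of $\tau$ and the grading shifts match the convention under which ``local maximum at height $m$'' translates precisely to ``$U^{m-1}$ acts nontrivially,'' so that the constant $-2$ and the factor $\tfrac12$ come out exactly rather than up to an additive slack.
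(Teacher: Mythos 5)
Your structural dictionary (a deep side branch of the graded root, i.e.\ a sufficiently large rise of $\tau$ followed later by a sufficiently large fall, forces $U^{k}\cdot HF_{red}(Y)\neq 0$) is essentially the paper's Lemma \ref{DeltaCond}, so that part of the plan is sound up to the off-by-one bookkeeping you yourself flag. The genuine gap is in how you propose to produce the required jump. Your averaging heuristic is miscalibrated: writing $\Delta(n) = 1+|e_0|n-\sum_i\lceil np_i'/p_i\rceil = 1 + n/\!\prod_i p_i - \sum_i c_i(n)$ with $c_i(n)=\lceil np_i'/p_i\rceil - np_i'/p_i\in[0,1)$ (using the Diophantine relation $|e_0|-\sum_i p_i'/p_i = 1/\!\prod_i p_i$), the mean of $\sum_i c_i(n)$ is about $\tfrac{l}{2}-\tfrac12\sum_i \tfrac1{p_i}$ per step, not $\sum_i\tfrac1{p_i}$. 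Consequently the average drift of $\tau$ on the whole range $[0,N_0/2]$ is \emph{downward}; summing $\Delta$ over an interval does not accumulate positive height at all, so no interval-averaging or equidistribution estimate of the kind you sketch can produce the needed local maximum, and it also says nothing about the ordering constraint (the positive jump must occur \emph{before} the negative one).

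What is missing is the exact-evaluation trick that the paper uses in place of any estimate: at $n=k\,p_1p_2\cdots p_l$ every ceiling term is an integer, so the fractional corrections vanish simultaneously and the Diophantine equation gives $\Delta(n)=k+1$ on the nose; the symmetry $\Delta(x)=-\Delta(N_0-x)$ then supplies the matching value $-(k+1)$ at $N_0-n$, and the hypothesis $0\le k<\tfrac12\bigl(l-2-\sum_i\tfrac1{p_i}\bigr)$ is exactly the condition $n<N_0-n$ (since $N_0=p_1\cdots p_l\bigl(l-2-\sum_i\tfrac1{p_i}\bigr)$), which puts the $+(k+1)$ before the $-(k+1)$ so that Lemma \ref{DeltaCond} applies. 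Without identifying these special points and the symmetry of $\Delta$, your plan as written would stall at the arithmetic step you correctly predicted to be the main obstacle.
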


\begin{proof}[Proof of Theorem \ref{HGres}] 
Gainullin's Theorem 3 in \cite{GainMC} tells us that $U^{g(K) +\left \lceil \frac{g_4(K)}{2}\right\rceil} \cdot HF_{red}\left(S^3_{p/q}(K)\right) = 0$ where $g(K)$ is the genus of the knot $K$, and $g_4(K)$ is the four ball genus of $K$. Using that $g_4(K) \leq g(K)$ we have the following:
\begin{align*}
g(K) + \left\lceil \frac{g_4(K)}{2} \right\rceil &\leq g(K) + \frac{g_4(K)+1}{2} \\
&\leq g(K) + \frac{g(K)+1}{2} \\
&= \frac{3g(K)+1}{2}.
\end{align*}
By Lemma \ref{kcond} we have that $U^k \cdot HF_{red}(Y) \neq 0$ for $$0 \leq k < \dfrac{1}{2}\left( l - 2 - \displaystyle \sum_{i=1}^l\dfrac{1}{p_i}\right),$$
thus if $$0 \leq \frac{3g(K)+1}{2} < \dfrac{1}{2}\left( l - 2 - \displaystyle \sum_{i=1}^l\dfrac{1}{p_i}\right),$$
we will have a contradiction, and $K$ will not yield a Seifert fibered integral  homology sphere with $l$ or more singular fibers in this case. Now we use that $$\displaystyle \sum_{i = 1}^l \frac{1}{p_i} < \displaystyle \sum_{i=1}^l\frac{1}{q_i} < \frac{1}{2} + \frac{1}{3} + \frac{l-2}{5},$$ where $q_i$ is the $i^{th}$ prime to solve the following inequality. 
\begin{align*}
\frac{3g(K)+1}{2} &< \dfrac{1}{2}\left( l - 2 - \displaystyle \sum_{i=1}^l\dfrac{1}{p_i}\right) \\
3g(K) + 1 &< l - 2 - \displaystyle \sum_{i = 1}^l\frac{1}{p_i}\\
\displaystyle \sum_{i = 1}^l \frac{1}{p_i} +3g(K) + 1 &< l - 2 \\
\displaystyle \sum_{i = 1}^l \frac{1}{p_i} &< l - 2 - 3g - 1 \\
\frac{1}{2} + \frac{1}{3} + \frac{l-2}{5} &< l - 3 - 3g(K) \\
g &< \frac{24l - 103}{90}.
\end{align*}
Thus our proof is complete. 
\end{proof}

We also have the following result for genus 1 knots. 

\begin{theorem}
No surgery on a genus 1 knot, $K$, in $S^3$ can yield a Seifert fibered integral homology sphere with 5 or more singular fibers. 
\label{gen1thm}
\end{theorem}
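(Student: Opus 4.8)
The plan is to run the same reduction used for Theorem~\ref{HGres}, comparing the order of $U$-torsion in $HF_{red}$ of a surgery on a genus one knot with that of a Seifert fibered integral homology sphere, but to replace the crude estimate in that argument by a sharper analysis of the graded root. Since a genus one knot $K$ satisfies $g_4(K)\le g(K)=1$, we have $g(K)+\lceil g_4(K)/2\rceil\le 2$, so Gainullin's Theorem~3 in~\cite{GainMC} gives $U^{2}\cdot HF_{red}\!\left(S^3_{p/q}(K)\right)=0$ for every slope $p/q$. Hence it suffices to prove that $U^{2}\cdot HF_{red}(\Sigma(p_1,\dots,p_l))\neq 0$ for every $l\ge 5$ and every tuple $(p_1,\dots,p_l)$ of pairwise coprime integers at least $2$; comparing this with the vanishing above yields the desired contradiction.

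First I would dispose of the large cases using Lemma~\ref{kcond} directly: one has $U^{2}\cdot HF_{red}(\Sigma(p_1,\dots,p_l))\neq 0$ as soon as $2<\tfrac12\big(l-2-\sum_i 1/p_i\big)$, that is, whenever $\sum_i 1/p_i<l-6$. Because the $p_i$ are pairwise coprime integers at least $2$, $\sum_i 1/p_i$ is bounded above by the sum of the reciprocals of the first $l$ primes, which is already less than $2$ at $l=8$ and whose deficit below $l-6$ only grows with $l$. So Lemma~\ref{kcond} settles all $l\ge 8$, as well as those $l=7$ tuples with $\sum_i 1/p_i<1$, leaving essentially $l\in\{5,6,7\}$.

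For these remaining small cases the bound coming out of Lemma~\ref{kcond} is too weak, and I would return to N\'emethi's description of $HF^+(\Sigma(p_1,\dots,p_l))$ via the graded root~\cite{NemGR} --- the tree of local minima of the associated $\tau$-function. The goal is to show that once there are at least five singular fibers this tree necessarily contains a leaf sitting at least three $U$-steps below its branch point, equivalently a local minimum of $\tau$ lying far enough below each of its two enclosing local maxima; such a leaf produces a summand of $HF_{red}(\Sigma(p_1,\dots,p_l))$ on which $U^{2}$ acts nontrivially, which is exactly what is needed. Concretely one wants a lower bound on the oscillation of $\tau$ over a window of the relevant width that is uniform over all such tuples --- phrased in terms of $l$ and the orbifold Euler number $e=-1/(p_1\cdots p_l)$, and tracking how the residues $b_i/p_i$ are distributed rather than only the single quantity $l-2-\sum_i 1/p_i$ that drives Lemma~\ref{kcond}. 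Establishing this uniform graded-root estimate is the technical heart of the proof and the step I expect to be the main obstacle; given it, the reduction above closes the argument.
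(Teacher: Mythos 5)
There is a genuine gap, and it sits exactly where you put it: the ``technical heart'' you defer is not a refinement of the paper's estimates but a strictly stronger statement that the paper deliberately avoids, and which is very doubtful in the smallest cases. Your reduction keeps Gainullin's bound for arbitrary slopes, $U^{2}\cdot HF_{red}(S^3_{p/q}(K))=0$ for $g(K)=1$, and therefore needs $U^{2}\cdot HF_{red}(\Sigma(p_1,\dots,p_l))\neq 0$ for every $l=5,6,7$ tuple not caught by Lemma \ref{kcond}. The paper only ever proves the $U^{1}$-statement for five fibers (Theorem \ref{5singfibersthm}), and that alone consumes all of Section \ref{FiveSingularFibers}: the mechanism there is to locate two consecutive $+1$'s in the $\Delta$-sequence before $N_0/2$, i.e.\ a local minimum of depth exactly two. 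Your plan needs depth three, and for the smallest five-fiber space this appears to fail outright. For $Y=\Sigma(2,3,5,7,11)$ one has $e_0=-3$, $p_1\cdots p_5=2310$, $N_0=4003$, and since $\Delta(n)\leq 1+n/2310$, every $n<N_0/2$ satisfies $\Delta(n)\leq 1$; so a depth-three configuration would require three values $\Delta=+1$ with no negative value of $\Delta$ in between, all before $N_0/2$. The cluster the paper's tables use ($pq\equiv 17 \bmod 30$, $e_0=-3$, entries $25pq=1925$ and $25pq+1$) is flanked by $\Delta(1924)=-1$ and $\Delta(1931)=-1$ with only zeros in between, so it yields depth exactly two, and the $-1$'s (which outnumber the $+1$'s by roughly the total drop of $\tau$ over the first half) make a three-term positive run implausible anywhere; in particular nothing like a uniform ``oscillation of $\tau$ by three'' estimate in terms of $l$ and $e$ can be true, since the quantity $\frac12\bigl(l-2-\sum_i 1/p_i\bigr)$ that controls such oscillations is below $1$ for this very example. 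So the step you expect to be the main obstacle is not merely unproven: the evidence is that it cannot be supplied for $l=5$ (and the same issue persists for $l=6$ and most $l=7$ tuples, where Lemma \ref{kcond} would need $\sum_i 1/p_i<0$ or $<1$).

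The missing idea is the one the paper actually uses: an integral homology sphere can only arise from $\pm 1/n$-surgery, and for these slopes one can do better than Gainullin's general bound. Theorem \ref{MCG1}, proved in Section \ref{MappingCone} by analyzing the truncated mapping cone for $1/n$-surgery on a genus one knot (restricting the possible $\mathbf{v}_0$, $\mathbf{h}_0$ via Corollary \ref{genusthm} and the level arguments), gives the sharper vanishing $U\cdot HF_{red}(S^3_{1/n}(K))=0$. With that, the non-vanishing needed on the Seifert side is only $U\cdot HF_{red}\neq 0$, which Lemma \ref{kcond} gives immediately for $l\geq 6$ and which Theorem \ref{5singfibersthm} supplies for $l=5$. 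If you want to salvage your plan, the work must go into strengthening the vanishing side for the relevant slopes (i.e.\ reproving something like Theorem \ref{MCG1}), not into strengthening the graded-root non-vanishing, which is where your proposal currently places it.
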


To prove this we need the following results: the first is proved in Section \ref{FiveSingularFibers}, the second is proved in Section \ref{MappingCone}. 

\begin{restatable}[]{theorem}{singfibersthm}
Let $Y = \Sigma(p_1,p_2,p_3,p_4,p_5).$ Then $$U\cdot HF_{red}(Y) \neq 0.$$
\label{5singfibersthm}
\end{restatable}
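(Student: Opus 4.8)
The plan is to compute $HF^+(Y)$ for $Y=\Sigma(p_1,\dots,p_5)$ via N\'emethi's graded root and then reduce the statement to an elementary feature of the associated weight function. Recall that a negative definite Seifert fibered homology sphere has a graded root determined by a function $\tau\colon\Z_{\geq 0}\to\Z$ with $\tau(0)=0$ and $\tau(n)\to+\infty$, built from the N\'emethi difference sequence $\Delta(n)=\tau(n+1)-\tau(n)$, which drops sharply wherever several of the five legs interact; and that $HF^+_{\mathrm{red}}(Y)$ is a direct sum of truncated towers $\mathcal{T}^+$, one for each ``merge'' of the graded root, the length of a tower being the difference between the level of that merge and the level at which the more recently born of the two merging branches was born. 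Since $U$ acts on a truncated tower of length $h$ by dropping height and annihilates it exactly after $h$ applications, it suffices to produce one truncated tower of length $\geq 2$, and a clean sufficient condition is that $\tau$ have a local maximum $n^{\ast}$ lying at least $2$ above \emph{both} of its two adjacent local minima: since $\Z_{\geq 0}$ is a line, any path between the two flanking minima passes through $n^{\ast}$, so one level below the peak the two flanking basins are distinct components, each born at least $2$ levels below the peak, and they merge at the peak, yielding a tower of length $\geq 2$. So the goal is to show that the $\tau$-function of every $\Sigma(p_1,\dots,p_5)$ has such a peak.

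First I would dispose of the generic case using Lemma~\ref{kcond}: with $l=5$ and $k=1$, its hypothesis $1<\tfrac12\bigl(3-\sum_{i=1}^5 1/p_i\bigr)$ is exactly $\sum_{i=1}^5 1/p_i<1$, so $U\cdot HF_{\mathrm{red}}(Y)\neq 0$ whenever $\sum 1/p_i<1$. It remains to treat the $5$-tuples of pairwise coprime integers $\geq 2$ with $\sum_{i=1}^5 1/p_i\geq 1$, of which there are only finitely many: if $\min_i p_i\geq 5$ then $\sum 1/p_i\leq 1/5+1/6+1/7+1/8+1/9<1$, so every such tuple contains one of $2,3,4$, and applying the same reciprocal-sum bound to the remaining four entries bounds the entire tuple.

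For these finitely many tuples I would analyse $\tau$ directly. Written out, the N\'emethi $\Delta$-sequence has the shape $\Delta(n)=1-w(n)$ with $w(n)\geq 0$ recording how many of the five legs have a ``wall'' at step $n$, so $\tau$ climbs in unit steps but plunges where the walls of the small fibers cluster; I expect to show that in each such configuration $\tau$ has already dropped to $\leq -2$ on a short initial interval $[0,M]$ after first climbing, forcing a peak of the required kind. This is a finite verification, organised by which of $2,3,4$ and which next-smallest values occur; a few resulting sub-families remain infinite (e.g.\ $\Sigma(2,3,5,7,p_5)$), so I would first establish a monotonicity statement---enlarging one Seifert invariant does not decrease the depth of the deepest valley of $\tau$---to reduce each such sub-family to its smallest members. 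An alternative, cleaner route is to revisit the estimate proving Lemma~\ref{kcond}: the bound there is the worst-case output of that method, and the slack lost in it (replacing fractional-part terms by extreme values, or a ceiling) should already upgrade $k=0$ to $k=1$ in the range $\sum 1/p_i\geq 1$ when $l=5$.

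The main obstacle is precisely this last mile: Lemma~\ref{kcond} is genuinely short of the required amount for the hardest tuples, such as $\Sigma(2,3,5,7,11)$, so one cannot avoid a closer look at $\tau$, and the real content lies in the bookkeeping for the small-fiber families---the monotonicity under enlarging a $p_i$ and the base cases, or the sharpening of the general estimate---rather than in anything deep about Heegaard Floer homology. A secondary point is to pin down, with all the off-by-one conventions, the passage between heights in the graded root, lengths of the $\mathcal{T}^+$ summands of $HF^+_{\mathrm{red}}$, and the local shape of $\tau$; I would isolate this and the sufficient-peak criterion above as a short preliminary lemma, so that the case analysis need only exhibit one honest depth-$2$ valley. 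Should some configuration lack such a peak---the length-$2$ tower arising instead from nested oscillations of $\tau$---a direct reading of its graded root would still settle it, but I expect the peak to be present in every case.
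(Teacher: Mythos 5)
Your generic reduction agrees with the paper: the case $\sum_{i=1}^5 1/p_i<1$ is exactly what Corollary~\ref{5singfiberscor} extracts from Lemma~\ref{kcond} (equivalently, $\Delta(p_1\cdots p_5)=2$, $\Delta(N_0-p_1\cdots p_5)=-2$ with $2p_1\cdots p_5\leq N_0$), and your ``peak $2$ above both flanking minima'' criterion is, after merges, the $k=1$ case of Lemma~\ref{DeltaCond}. The problem is everything after that. The set of exceptional tuples is not finite, and not just ``a few'' one-parameter stragglers: since $1/2+1/3+1/5>1$, the entire two-parameter family $\Sigma(2,3,5,p,q)$ escapes the generic bound, as do $\Sigma(2,3,7,p,q)$ for a range of $p$ and $\Sigma(2,3,11,13,q)$; your claim that membership of one of $2,3,4$ ``bounds the entire tuple'' fails because the four remaining reciprocals can exceed the needed threshold using only three of them (e.g.\ $3,5,7$), leaving the last entry unconstrained. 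These infinite families are precisely where the paper does almost all of its work (Lemmas~\ref{235pqlem}, \ref{2357pqlem}, \ref{237pqlem}, \ref{other5lem}): it locates two consecutive $+1$'s in the $\Delta$-sequence before $N_0/2$ by explicitly evaluating $\Delta$ at points such as $25pq$, $26pq\pm1$, $15pq+30pq'$, $781p$, etc., with the correct choice depending on $pq \bmod 30$ (or $42$), on the Euler term $e_0$, and on which subinterval $p'/p$ and $q'/q$ lie in, followed by inequalities certifying these points occur before $N_0/2$.

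Your proposal replaces all of this with an unproven ``monotonicity'' assertion (enlarging one multiplicity does not decrease the depth of the deepest valley of $\tau$), and that is the gap: it is the actual mathematical content of the theorem in the hard range, it is not established by anything you cite, and the paper's tables make it look doubtful as a cheap lemma, since the location and even the shape of the depth-$2$ feature jumps around discontinuously with the residue of $pq$ and with the Diophantine data $(e_0,p',q')$ as the multiplicities vary; moreover, to cover $\Sigma(2,3,5,p,q)$ you would need such monotonicity simultaneously in two parameters, through changing coprimality constraints. You also concede that the other escape route (sharpening Lemma~\ref{kcond}) falls short for tuples like $(2,3,5,7,11)$. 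So as written the argument proves the statement only when $\sum 1/p_i<1$ plus finitely many checked cases, and the infinite families with $\sum 1/p_i\geq 1$ remain open; to close them you would need either to prove the monotonicity statement (a substantial result in itself) or to carry out a case analysis of the $\Delta$-sequence of the kind the paper actually performs.
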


\begin{theorem}
For a genus 1 knot $K$ and $n \in \Z$ we have that $U \cdot HF_{red}(S^3_{1/n}(K)) = 0$.
\label{MCG1}
\end{theorem}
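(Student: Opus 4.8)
The plan is to compute $HF^+(S^3_{1/n}(K))$ directly from the rational surgery mapping cone formula of Ozsv\'ath and Szab\'o and to exploit the fact that a genus-one knot has very little knot Floer homology. The case $n=0$ is vacuous, since $S^3_{1/0}(K)=S^3$. Because the mirror of a genus-one knot is again genus one, and reversing orientation changes the reduced homology only through the usual duality, which does not affect whether it is annihilated by $U$, it suffices to treat $n>0$. For such $n$, $\mathbb{X}^+(1/n)$ is the mapping cone of a map $D\colon\bigoplus_s A_s^+\to\bigoplus_s B_s^+$ in which every $B_s^+\cong\mathcal{T}^+$ and, because $g(K)=1$, every $A_s^+$ with $s\ne 0$ is also isomorphic to $\mathcal{T}^+$; only $A_0^+$ is potentially interesting. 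On the tower summands the vertical and horizontal maps are multiplication by $U^{V_s}$ and $U^{H_s}$, with $V_s=0$ for $s\ge 1$, $H_s=0$ for $s\le -1$, and $V_0=H_0\in\{0,1\}$.

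The key input is the shape of $A_0^+(K)$ for a genus-one knot. Since $\widehat{HFK}(K)$ is supported in Alexander gradings $\{-1,0,1\}$, in the complex $C\{\max(i,j)\ge 0\}$ (whose homology is $A_0^+$) the only surviving generators, and $U$-powers of generators, with $i<0$ are single $U$-multiples of generators in Alexander grading $+1$; hence the subcomplex $C\{i<0\}\cap C\{\max(i,j)\ge 0\}$ is annihilated by $U$, which already yields $U^2\cdot HF_{red}(A_0^+)=0$. To sharpen this to $U\cdot HF_{red}(A_0^+)=0$ I would use the structure theorem for $CFK^\infty$ of a genus-one knot: up to filtered chain homotopy equivalence over $\mathbb{F}_2$ it is a direct sum of the staircase complex attached to $\tau(K)\in\{-1,0,1\}$ and finitely many box summands, each of which contributes at most a single copy of $\mathbb{F}$ to $HF_{red}(A_0^+)$. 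Thus $A_0^+(K)\cong\mathcal{T}^+\oplus\mathbb{F}^m$ as $\mathbb{F}[U]$-modules for some $m\ge 0$, and in particular $U$ annihilates its reduced part.

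Granting this, the remainder is a finite linear-algebra computation. Because $\mathbb{X}^+(1/n)$ carries no differential internal to the $A_s^+$ and $B_s^+$, its homology is $\ker D\oplus\operatorname{coker}D$ up to a grading shift on the second summand; after truncating to a finite subcomplex and Gaussian-eliminating the isomorphisms $v_s$ $(s\ge 1)$ and $h_s$ $(s\le -1)$, one is left with a small complex assembled from $A_0^+=\mathcal{T}^+\oplus\mathbb{F}^m$ and finitely many copies of $\mathcal{T}^+$, all residual maps being powers of $U$. The tower summands recombine, exactly as in the lens-space computation, into a single $\mathcal{T}^+$ together with at most one further $\mathbb{F}$ that appears when $V_0=1$; the $\mathbb{F}^m$ in $A_0^+$ contributes only copies of $\mathbb{F}$, since it is $U$-torsion of order one and any map out of it lands in the $U$-socle of a $\mathcal{T}^+$. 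A direct check that none of these finite pieces extend one another into an $\mathbb{F}[U]$-module with $U$-torsion of order $\ge 2$ then gives $HF_{red}(S^3_{1/n}(K))\cong\mathbb{F}^N$ for some $N\ge 0$, whence $U\cdot HF_{red}(S^3_{1/n}(K))=0$.

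I expect the genuine difficulty to lie in two places. First, improving the bound on $A_0^+$ from $U^2$ to $U^1$ rests on the staircase-plus-boxes description of $CFK^\infty$ for genus-one knots; establishing or carefully citing that description is the real crux, since the coarse support argument alone only reproduces the exponent already implied by Gainullin's theorem. Second, the truncation window of $\mathbb{X}^+(1/n)$ grows with $n$, so the Gaussian elimination and the identification of the residual complex must be carried out uniformly in $n$, keeping track of where the single possible jump $V_0=1$ enters and verifying that no chain of residual $U$-powers, linked through the mapping-cone differential, conspires to produce $U$-torsion of order two or more. The remaining ingredients — the reduction to $n>0$, the general form of the mapping cone, and the final bookkeeping — are routine.
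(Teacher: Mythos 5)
Your setup (reduction to $n>0$, truncation, the fact that only $A_0^+$ is interesting, $V_0=H_0\in\{0,1\}$) matches the paper, but two things go wrong. First, the input you flag as ``the real crux'' --- a staircase-plus-boxes structure theorem for $CFK^\infty$ of genus-one knots --- is both unavailable off the shelf (with $\widehat{HFK}$ supported in Alexander gradings $-1,0,1$ nothing standard excludes summands with length-two or diagonal differentials) and unnecessary for what you use it for: the module statement $\mathbf{A}_0^+\cong\mathcal{T}^+\oplus\F^m$, i.e.\ $U\cdot\mathbf{A}_0^{red}=0$, is exactly Gainullin's result quoted in the paper as Corollary \ref{genusthm} with $g=1$.

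The genuine gap is in your ``direct check.'' Knowing the $\F[U]$-module structure of the $\mathbf{A}_s^+$ and $\mathbf{B}^+$ together with $V_s,H_s$ does \emph{not} determine $HF^+(S^3_{1/n}(K))$, because the restrictions of $\mathbf{v}_0,\mathbf{h}_0$ to the reduced summand $\F^m$ are unconstrained beyond landing in the socles of the target towers --- and that is not enough. Concretely, suppose $n\geq 2$, $V_0=H_0=1$, and some reduced class $x$ in the grading of $a^1_{(0,2)}$ (one level above the bottom of the tower) had $\mathbf{v}_0(x)$ and $\mathbf{h}_0(x)$ equal to the bottom classes of the two adjacent copies of $\mathbf{B}^+$, i.e.\ the same images as $a^1_{(0,2)}$. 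Then $x+a^1_{(0,2)}\in\ker\mathbf{D}^+_{1/n}$, while $U\cdot(x+a^1_{(0,2)})=a^0_{(0,2)}$ is a nonzero kernel element that is not in the stable image of $U$ (that image consists of the diagonal sums $\sum_i a^j_{(0,i)}$), so $HF_{red}(S^3_{1/n}(K))$ would contain $U$-torsion of order two. Your hypotheses allow exactly this configuration, so the check cannot succeed as stated; ruling it out is the actual content of the paper's argument, namely Lemmas \ref{lvls1} and \ref{lvls2}, which show $\mathbf{v}_0$ and $\mathbf{h}_0$ vanish on $\mathbf{A}_0^{red}$ away from level $0$, the delicate $V_0=H_0=1$, level-one case being handled with the isomorphism $\phi$ satisfying $\mathbf{v}_0\circ\phi=\mathbf{h}_0$ together with the $\pm1$-surgery computation (Lemma \ref{onesurg} via Corollary \ref{genusthm}). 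You would need to supply this step (or genuinely compute the maps from a full filtered homotopy type, which brings back the unproved structure theorem). A smaller slip: when $V_0=1$ the tower summands contribute the bottom class of each of the $n$ copies of $\mathbf{A}_0^+$, not ``at most one further $\F$,'' though this is only bookkeeping.
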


\begin{proof}[Proof of Theorem \ref{gen1thm}] By Lemma \ref{kcond} we have that $U\cdot HF_{red}(Y) \neq 0$ when $$1 < \frac{1}{2}\left(l - 2 - \displaystyle \sum_{i=1}^l\frac{1}{p_i}\right),$$ and $Y$ is a Seifert fibered integral homology sphere with $l$ of more singular fibers. We now show this condition is satisfied for $l \geq 6$.
\begin{align*}
\displaystyle \sum_{i=1}^6 \frac{1}{p_i}&< \frac{1}{2} + \frac{1}{3} + \frac{1}{5} + \frac{1}{7} + \frac{1}{11} + \frac{1}{13} \\
\displaystyle \sum_{i=1}^6 \frac{1}{p_i}&< 2\\
0 &< 2 - \displaystyle \sum_{i=1}^6 \frac{1}{p_i} \\
2 &< 4 - \displaystyle \sum_{i=1}^6 \frac{1}{p_i} \\
1 &< \frac{1}{2}\left( 6 - 2 - \displaystyle \sum_{i=1}^6 \frac{1}{p_i}\right)
\end{align*} 
Now by Theorem \ref{MCG1} we have that $U \cdot HF_{red}(S^3_{1/n}(K)) = 0$ for a genus 1 knot $K$. It follows that surgery on a genus 1 knot cannot yield a Seifert fibered integral homology sphere with 6 or more singular fibers. Then by applying Theorem \ref{5singfibersthm} our proof is complete. 

\end{proof}

\paragraph{Organization} In Section \ref{GradedRoots} we introduce N\'{e}methi's graded root and use this to prove Lemma \ref{kcond}. We also discuss some conjectures and the improved bound mentioned above. In Section \ref{FiveSingularFibers} we address the case of five singular fibers and prove Theorem \ref{5singfibersthm}. Finally, in Section \ref{MappingCone} we use the mapping cone formula for knot Floer homology to prove Theorem \ref{MCG1}. 

\paragraph{Acknowledgments} I would like to thank Tye Lidman for his guidance in research and Jen Hom and Duncan McCoy for their help with writing this paper. In particular, Duncan McCoy suggested a more streamlined way to prove the graded root results presented in Section 2. I would also like to thank Samuel Flynn for his help with the necessary coding involved in this work. I was partially supported by DMS-1709702 while working on this paper.

\section{Graded Roots}\label{GradedRoots}
In this section we introduce N\'{e}methi's graded root \cite{NemGR} and setup the preliminary results necessary for our obstructions. Originally this object was constructed from the plumbing graph for a plumbed 3-manifold. We will predominantly be following Can and Karakurt's specific construction for Seifert fibered homology spheres in \cite{CanKar}.

\subsection{Construction}
First, a graded root is defined as follows:
\begin{definition}(N\'emethi, \cite[Definition 3.1.2]{NemGR}) Let $R$ be an infinite tree with vertices $V$ and edges $E$. We denote by $[u, v]$ the edge with end-points $u$ and $v$. We say that $R$ is a \textbf{graded root} with grading $\chi \colon V \to \Z$ if
\begin{itemize}
\item[(a)] $\chi(u)- \chi(v) = \pm 1$ for any $[u,v] \in E$,
\item[(b)] $\chi(u) > \min\{\chi(v), \chi(w)\}$ for any $[u,v], [u,w] \in E$, and $v \neq w$,
\item[(c)] $\chi$ is bounded below, and $\chi^{-1}(k)$ is finite for any $k \in \Z$ and $| \chi^{-1}(k)| = 1$ for $k$ sufficiently large. 
\end{itemize}
\end{definition}

Now we can give the basics for the construction of a graded root for a given Seifert fibered homology sphere, $Y = \Sigma(p_1,\dots, p_l)$, where $p_1<p_2<\cdots<p_l$. We define a function $\Delta \colon \N \to \Z$ by 
\begin{align}
\Delta(n) = 1+|e_0|n - \displaystyle \sum_{i=1}^l \left\lceil \frac{np_i'}{p_i}\right\rceil,
   \label{Delta}
 \end{align}
  where $(e_0, p_1',\dots,p_l')$ is the unique solution to 
  $$e_0p_1\cdots p_l + p_1'p_2\cdots p_l + p_1p_2'\cdots p_l +\cdots + p_1p_2 \cdots p_l' = -1,$$ and $$0 < p_i' \leq p_i - 1 \text{ for } i =1, 2, \dots, l.$$ By \cite[Theorem 4.1, (2)]{CanKar} we have that $\Delta(n)$ is always positive for $n \geq N_0$ and 
  $$N_0 = p_1p_2 \cdots p_l \left( (l-2) - \displaystyle \sum_{i=1}^l \frac{1}{p_i}\right).$$

The general definition follows. 
  \begin{definition}(Karakurt-Lidman, \cite[Definition 3.1]{KarLid})
  A \textbf{$\Delta$-sequence} is a pair $(X, \delta)$ where $X$ is a well ordered finite set, and $\delta \colon X \to \Z \setminus \{0\}$ with $\delta(x_0) > 0$ where $x_0$ is the minimum of $X$. 
  \end{definition}
  
For our purposes, we will have the $\Delta$-sequence $(X, \Delta)$, where $\Delta(n)$ is given by \eqref{Delta} and $X = \{x \in \N \mid \Delta(x) \neq 0 \text{ and } x \leq N_0\}$. We also have that $\Delta$ will be symmetric, as $\Delta(n) = -\Delta(N_0-n)$ \cite[Theorem 4.1, (2)]{CanKar}. For the remainder of this paper when we refer to a $\Delta$-sequence we will be referring to the set $\{\Delta(x_0), \Delta(x_1), \dots \Delta(x_k)\}$, where $x_k$ is the last integer that satisfies $\Delta(x_k) <0$, as we will only be using the function defined in \eqref{Delta}.\par
  
Once we have our $\Delta$-sequence, we can define a $\tau$ function $\tau_{\Delta} \colon \{0,1,\dots,k\} \to \Z$ using the recurrence relation $\tau_\Delta(n+1) - \tau_\Delta(n) = \Delta(x_n)$, with the initial condition $\tau_\Delta(0) = 0$. Let $\{\tau_\Delta(0), \dots, \tau_\Delta(k)\}$, be called the $\tau$ sequence. \par

Now that we know how to get a $\tau$ sequence for a Seifert fibered homology sphere, we can use this to construct our graded root, following \cite[Example 3.1.3]{NemGR}. For every $n \in \Z$, let $R_n$ be the infinite graph with vertex set $\Z \cap [\tau(n), \infty)$ and edge set $\{[k,k+1] \colon k \in \Z \cap [\tau(n),\infty)\}$. We then identify all common vertices and edges of $R_n$ and $R_{n+1}$. This gets us an infinite tree $\Gamma_\tau$, and we assign a function $\chi(v)$ that gives the unique integer corresponding to the grading of the vertex $v$. 

\begin{example}
The construction of $\Gamma_\tau$ for the $\tau$ sequence $\{0,-1,0,-1,-2,-3,-4,-3\}$, is seen in Figure \ref{taufig}.
 \begin{figure}[h]
 \centering
 \label{taufig}
\begin{tikzpicture}[thick,scale=0.6, every node/.style={scale=0.6}]
\node[hackennode] (w0) at (-4,0) {};
\node[left] at (-4,.25) {};
\node[hackennode] (w1) at (-4,1) {};
\node[left] at (-4,1.25) {};
\node[hackennode] (w2) at (-4,2) {};
\node[left] at (-4,2.25) {};
\node[hackennode] (w3) at (-4,3) {};
\node[hackennode] (w3.0) at (-4,3.25) {};
\node[hackennode] (w3.1) at (-4,3.5) {};
\node[below] at (-4,4.25) {$R_0$};

\draw[thick, blue] (w0) -- (w1) -- (w2) -- (w3);

\node[hackennode] (v-1) at (-3,-1) {};
\node[left] at (-3,-.75) {};
\node[hackennode] (v0) at (-3,0) {};
\node[hackennode] (v1) at (-3,1) {};
\node[hackennode] (v2) at (-3,2) {};
\node[hackennode] (v3) at (-3,3) {};
\node[hackennode] (v3.0) at (-3,3.25) {};
\node[hackennode] (v3.1) at (-3,3.5) {};
\node[below] at (-3,4.25) {$R_1$};

\draw[thick, blue] (v-1) -- (v0) -- (v1) -- (v2) -- (v3);

\node[hackennode] (x0) at (-2,0) {};
\node[hackennode] (x1) at (-2,1) {};
\node[hackennode] (x2) at (-2,2) {};
\node[hackennode] (x3) at (-2,3) {};
\node[hackennode] (x3.0) at (-2,3.25) {};
\node[hackennode] (x3.1) at (-2,3.5) {};
\node[below] at (-2,4.25) {$R_2$};

\draw[thick, blue] (x0) -- (x1) -- (x2) -- (x3);

\node[hackennode] (y-1) at (-1,-1) {};
\node[left] at (-1,-.75) {};
\node[hackennode] (y0) at (-1,0) {};
\node[hackennode] (y1) at (-1,1) {};
\node[hackennode] (y2) at (-1,2) {};
\node[hackennode] (y3) at (-1,3) {};
\node[hackennode] (y3.0) at (-1,3.25) {};
\node[hackennode] (y3.1) at (-1,3.5) {};
\node[below] at (-1,4.25) {$R_3$};

\draw[thick, blue] (y-1) -- (y0) -- (y1) -- (y2) -- (y3);

\node[hackennode] (z-2) at (0,-2) {};
\node[left] at (0,-1.75) {};
\node[hackennode] (z-1) at (0,-1) {};
\node[hackennode] (z0) at (0,0) {};
\node[hackennode] (z1) at (0,1) {};
\node[hackennode] (z2) at (0,2) {};
\node[hackennode] (z3) at (0,3) {};
\node[hackennode] (z3.0) at (0,3.25) {};
\node[hackennode] (z3.1) at (0,3.5) {};
\node[below] at (-0,4.25) {$R_4$};

\draw[thick, blue] (z-2) -- (z-1) -- (z0) -- (z1) -- (z2) -- (z3);

\node[hackennode] (a-3) at (1,-3) {};
\node[left] at (1,-2.75) {};
\node[hackennode] (a-2) at (1,-2) {};
\node[hackennode] (a-1) at (1,-1) {};
\node[hackennode] (a0) at (1,0) {};
\node[hackennode] (a1) at (1,1) {};
\node[hackennode] (a2) at (1,2) {};
\node[hackennode] (a3) at (1,3) {};
\node[hackennode] (a3.0) at (1,3.25) {};
\node[hackennode] (a3.1) at (1,3.5) {};
\node[below] at (1,4.25) {$R_5$};

\draw[thick, blue] (a-3) -- (a-2) -- (a-1) -- (a0) -- (a1) -- (a2) -- (a3);

\node[hackennode] (b-4) at (2,-4) {};
\node[left] at (2,-3.75) {};
\node[hackennode] (b-3) at (2,-3) {};
\node[hackennode] (b-2) at (2,-2) {};
\node[hackennode] (b-1) at (2,-1) {};
\node[hackennode] (b0) at (2,0) {};
\node[hackennode] (b1) at (2,1) {};
\node[hackennode] (b2) at (2,2) {};
\node[hackennode] (b3) at (2,3) {};
\node[hackennode] (b3.0) at (2,3.25) {};
\node[hackennode] (b3.1) at (2,3.5) {};
\node[below] at (2,4.25) {$R_6$};

\draw[thick, blue] (b-4) -- (b-3) -- (b-2) -- (b-1) -- (b0) -- (b1) -- (b2) -- (b3);

\node[hackennode] (c-3) at (3,-3) {};
\node[hackennode] (c-2) at (3,-2) {};
\node[hackennode] (c-1) at (3,-1) {};
\node[hackennode] (c0) at (3,0) {};
\node[hackennode] (c1) at (3,1) {};
\node[hackennode] (c2) at (3,2) {};
\node[hackennode] (c3) at (3,3) {};
\node[hackennode] (c3.0) at (3,3.25) {};
\node[hackennode] (c3.1) at (3,3.5) {};
\node[below] at (3,4.25) {$R_7$};

\draw[thick, blue] (c-3) -- (c-2) -- (c-1) -- (c0) -- (c1) -- (c2) -- (c3);

\node[left] at (4,-.5) {$\rightarrow$};

\node[hackennode] (w0') at (5,0) {};
\node[left] at (5,.25) {};
\node[hackennode] (w1') at (5,1) {};
\node[left] at (5,1.25) {};
\node[hackennode] (w2') at (5,2) {};
\node[left] at (5,2.25) {};
\node[hackennode] (w3') at (5,3) {};
\node[hackennode] (w3.0') at (5,3.25) {};
\node[hackennode] (w3.1') at (5,3.5) {};
\node[below] at (5,4.25) {$\Gamma_\tau$};

\node[hackennode] (v1') at (5,-1) {};
\node[left] at (5,-.75) {};
\node[hackennode] (v3') at (6,-1) {};
\node[right] at (6,-.75) {};
\node[hackennode] (v4') at (6,-2) {};
\node[right] at (6,-1.75) {};
\node[hackennode] (v5') at (6,-3) {};
\node[right] at (6,-2.75) {};
\node[hackennode] (v6') at (6,-4) {};
\node[right] at (6,-3.75) {};

\draw[thick, blue] (v1') -- (w0') -- (w1') -- (w2') -- (w3');
\draw[thick, blue] (w0') -- (v3') -- (v4') -- (v5') -- (v6');

\node[left] at (-5,2) {2};
\node[left] at (-5,1) {1};
\node[left] at (-5,0) {0};
\node[left] at (-5,-1) {-1};
\node[left] at (-5,-2) {-2};
\node[left] at (-5,-3) {-3};
\node[left] at (-5,-4) {-4};

\draw[gray, dashed] (-5,2)--(8,2);
\draw[gray, dashed] (-5,1)--(8,1);
\draw[gray, dashed] (-5,0)--(8,0);
\draw[gray, dashed] (-5,-1)--(8,-1);
\draw[gray, dashed] (-5,-2)--(8,-2);
\draw[gray, dashed] (-5,-3)--(8,-3);
\draw[gray, dashed] (-5,-4)--(8,-4);

\end{tikzpicture}

 \caption{Construction of $\Gamma_\tau$}
\end{figure}
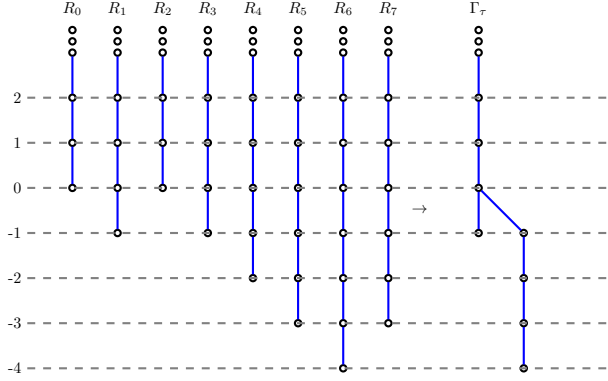
\end{example}

We now define some operations on $\Delta$-sequences which will be used to prove our results. 

\begin{definition}(Karakurt-Lidman, \cite[Section 3]{KarLid})
Let $(X, \Delta)$ be a $\Delta$-sequence. Let $t$ be a positive integer and $x \in X$ with $| \Delta(x)| \geq t$. From this we construct a new $\Delta$-sequence $(X', \Delta')$ as follows. The set $X'$ is obtained by removing $x$ from $X$ and putting $t$ consecutive elements $x_1,\dots,x_t$ in its place. Now, choose nonzero integers $n_1, \dots , n_t$ each with the same sign as $\Delta(x)$ such that $n_1 + \cdots + n_t = \Delta(x)$. The new $\Delta'$ agrees with $\Delta$ on $X \setminus \{x\}$ and satisfies $\Delta'(x_i) = n_i$ for $i = 1,\dots,t$. $(X',\Delta')$ is called a \textbf{refinement} of $(X,\Delta)$ at $x$. Conversely $(X, \Delta)$ is called a \textbf{merge} of $(X', \Delta')$
\end{definition}

\subsection{Properties}

Now that we can construct our graded root, we will introduce some properties and terminology. To any graded root $\Gamma$, we associate a $\Z$-graded, $\Z[U]$-module. We let $\Hb(\Gamma)$ be the free abelian group generated by the vertex set of $\Gamma$, and the degree of the generator corresponding to the vertex, $v$ has degree $2 \chi(v)$. The $U$-action on $\Hb ( \Gamma)$ is a degree $-2$ endomorphism that sends each vertex $v$ to the sum of all vertices $w$ connected to $v$ by an edge where $\chi(w) < \chi(x)$. If no vertices $w$ satisfy this, then $U$ sends $v$ to zero. We can now define two finitely generated groups:
$$\Hp = \operatorname{Coker}(U^n), \text{ for large } n,$$
$$\widehat{\mathbb{H}}(\Gamma) = \operatorname{Ker}(U) \oplus \operatorname{Coker}(U)[-1].$$
We will also refer to the image of $U^n$ for large $n$ as the tower, denoted $\mathcal{T}^+$, which will eventually stabilize and become constant. We can then view $\Hp$, which is the primary focus of this paper, as $\nicefrac{\Hb(\Gamma)}{\mathcal{T}^+}$. \par
These groups encode Heegaard Floer homology as follows:
\begin{theorem}(N\'emethi, \cite[Section 11.3]{NemOS})
Let $Y$ be a positively oriented integral homology sphere, then we have the following isomorphisms of $\Z[U]$-modules up to an overall degree shift:
\begin{itemize}
\item[(1)] $HF^+(-Y) \cong \Hb(\Gamma)$,
\item[(2)] $HF_{red}(-Y) \cong \Hp$,
\item[(3)] $\widehat{HF}(-Y) \cong \widehat{\mathbb{H}}(\Gamma)$.
\end{itemize}
\end{theorem}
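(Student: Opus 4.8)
The plan is to obtain all three isomorphisms from two external inputs, combined in the now-standard way: Ozsv\'ath and Szab\'o's computation of $HF^+$ for negative-definite plumbed three-manifolds with at most one bad vertex, and N\'emethi's reduction of the resulting combinatorial module to a graded root in the almost-rational case \cite{NemGR, NemOS}.

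First I would fix the plumbing. The standing hypothesis is that $Y$ bounds a negative-definite plumbing, and for $Y = \Sigma(p_1,\dots,p_l)$ this is the canonical star-shaped tree $G$ with a central vertex of weight $e_0 < 0$ and $l$ legs encoding the Seifert data $(p_i, p_i')$ that appears in \eqref{Delta}. Every non-central vertex of $G$ is good (its weight is at most minus its valence), so $G$ has at most one bad vertex and Ozsv\'ath--Szab\'o's plumbing theorem applies, giving an isomorphism of $\Z$-graded $\Z[U]$-modules $HF^+(-Y) \cong \mathbb{H}^+(G)$, where $\mathbb{H}^+(G)$ is the combinatorial $\Z[U]$-module they attach to the plumbing lattice (determined by characteristic covectors and an adjunction-type relation, with $U$ the natural degree $-2$ endomorphism). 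Since $Y$ is an integral homology sphere there is a unique spin$^c$ structure, so $\mathbb{H}^+(G)$ is a single $\mathcal{T}^+$-tower together with a finite reduced summand --- the same shape as $\Hb(\Gamma)$.

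Next I would carry out the reduction to the graded root, following N\'emethi. Because $G$ is star-shaped it is almost rational, and one may use the central vertex as the distinguished bad vertex: restricting the Riemann--Roch/adjunction data to the one-parameter family of lattice points obtained by increasing the central multiplicity, the multivariable count defining $\mathbb{H}^+(G)$ collapses to a single function $\N \to \Z$ whose consecutive increments recover $\Delta(n)$ of \eqref{Delta}. Summing these increments from $\tau_\Delta(0) = 0$ produces the $\tau$-sequence, and the fact that $\Delta(n) > 0$ for $n \ge N_0$ \cite[Theorem 4.1]{CanKar} is exactly what forces $\tau$ to be eventually increasing, hence to have only finitely many local minima, hence to define a legitimate graded root $\Gamma = \Gamma_\tau$. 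The core claim is then that sending a lattice point to its $\tau$-value induces an isomorphism $\mathbb{H}^+(G) \cong \Hb(\Gamma_\tau)$ of $\Z[U]$-modules: in grading $2m$ the generators correspond to the connected components of the sublevel set $\{\tau \le m\}$, and the $U$-action on $\Hb(\Gamma_\tau)$, which sends a vertex to the sum of its lower-grading neighbours, matches the $U$-action on $\mathbb{H}^+(G)$ because the tree structure of $\Gamma_\tau$ records precisely how these sublevel sets refine as $m$ decreases. Composing with the isomorphism of the previous paragraph proves (1).

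Finally, (2) and (3) are formal consequences of (1) and the definitions. By construction $HF_{red}(-Y) = \operatorname{Coker}(U^n) = HF^+(-Y)/\mathcal{T}^+$ for $n \gg 0$, and likewise $\Hp = \Hb(\Gamma)/\mathcal{T}^+$; since the isomorphism of (1) intertwines $U$ it carries the tower to the tower and descends to (2). For (3), the exact triangle $\cdots \to HF^+(-Y) \xrightarrow{U} HF^+(-Y) \to \widehat{HF}(-Y) \to \cdots$ identifies $\widehat{HF}(-Y) \cong \operatorname{Ker}(U) \oplus \operatorname{Coker}(U)[-1]$, and the same bookkeeping on the graded-root side computes $\widehat{\mathbb{H}}(\Gamma)$, so (1) transports one to the other. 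The overall degree shift in all three statements is pinned down by matching a single distinguished grading --- the bottom of the tower, i.e.\ the correction term $d(-Y)$, against $\min_v 2\chi(v)$ --- after which the $\pm 1$ edge condition in the definition of a graded root propagates the agreement to every grading. The step I expect to be the main obstacle is the reduction in the third paragraph: proving that for a star-shaped graph the genuinely multivariable module $\mathbb{H}^+(G)$ is captured with no loss by the one-variable graded root \emph{as a $\Z[U]$-module}, not merely as a graded abelian group --- that the adjunction relations among characteristic covectors degenerate, along the central-vertex direction, into exactly the branching relations encoded by $\Gamma_\tau$, so that the $U$-action is faithfully recorded by the path $\tau$ and is not an artifact of the full lattice.
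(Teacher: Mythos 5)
The paper offers no proof of this theorem: it is quoted directly from N\'emethi \cite{NemOS}, so there is no internal argument to compare yours against. Your sketch is a faithful reconstruction of how the result is actually established in the cited literature --- Ozsv\'ath--Szab\'o's computation of $HF^+$ for negative-definite plumbings with at most one bad vertex, followed by N\'emethi's collapse of that combinatorial module, for an almost-rational (here star-shaped) graph, onto the one-variable $\tau$-sequence and its graded root --- and your derivation of (2) and (3) from (1) matches the paper's own definitions of $\Hp$ and $\widehat{\mathbb{H}}(\Gamma)$. The one point worth flagging is that the theorem as stated needs the standing negative-definite (almost-rational) plumbing hypothesis that the paper imposes globally rather than holding for an arbitrary homology sphere; you invoke that hypothesis correctly, including the check that all non-central vertices of the star-shaped graph are good.
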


\begin{prop}(Karakurt-Lidman, \cite[Proposition 3.6]{KarLid})
Refinements and merges do not change $\Hb(\Gamma)$ or $\Hp$. 
\end{prop}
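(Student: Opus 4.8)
The plan is to show that a refinement does not change the graded root $\Gamma_\tau$ at all, up to graded isomorphism; since both $\Hb(\Gamma)$ and $\Hp$ are defined purely from the graded-root data $(\Gamma,\chi)$, invariance under refinement follows immediately, and invariance under merges follows because a merge is by definition the inverse of a refinement.

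First I would isolate the key structural fact about the construction $\tau \mapsto \Gamma_\tau$: the graded isomorphism type of $\Gamma_\tau$ depends only on the \emph{extremal profile} of $\tau$, that is, on the ordered list of values of $\tau$ at its local minima and local maxima. The reason is built into the gluing. Along any maximal strictly monotone run $\tau(a), \tau(a+1),\dots,\tau(b)$ of the $\tau$-sequence (all steps of the same sign), the rays $R_a,R_{a+1},\dots,R_b$ are totally ordered by inclusion, since $R_n$ has vertex set $\Z \cap [\tau(n),\infty)$ and the $\tau(n)$ are monotone; identifying their common vertices and edges therefore simply collapses them onto the single ray starting at the lower endpoint. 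Hence no branch vertex is created in the interior of a monotone run, while $\chi$ of every vertex is just its integer height. Consequently the branch vertices and their gradings are determined by the local minima, and the lengths of the finite branches are determined by the adjacent local maxima; the extremal profile alone determines $(\Gamma_\tau,\chi)$ up to graded isomorphism.

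Next I would check that a refinement preserves the extremal profile. A refinement at $x = x_n \in X$ replaces the single step of size $\Delta(x)$ from $\tau(n)$ to $\tau(n+1)$ by $t$ consecutive steps of sizes $n_1,\dots,n_t$, all of the same sign as $\Delta(x)$ and summing to $\Delta(x)$, leaving the rest of the $\tau$-sequence unchanged up to re-indexing. Since each partial sum $n_1+\cdots+n_j$ for $1 \le j \le t-1$ lies strictly between $0$ and $\Delta(x)$, the new intermediate values of $\tau$ lie strictly between $\tau(n)$ and $\tau(n+1)$ and are strictly monotone. So the affected portion of the sequence is still a strictly monotone run with the same endpoints: no local extremum is created, destroyed, or moved, and no extremal value changes. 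By the structural fact above, $\Gamma_{\tau'} \cong \Gamma_\tau$ as graded roots, hence the associated modules $\Hb(\Gamma)$ and $\Hp$ agree on the two sides. Exchanging the roles of $(X,\Delta)$ and $(X',\Delta')$ gives the statement for merges.

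The main obstacle I anticipate is making the structural fact fully precise: one must verify that identifying the common vertices and edges of the nested rays along a monotone run never merges vertices of different gradings and never alters the two branch vertices at the ends of the run, and one should treat the two boundary cases — the initial run emanating from $\tau(0)$ and the terminal run — so that the stabilized tower $\mathcal{T}^+$, and hence the quotient $\Hp = \Hb(\Gamma)/\mathcal{T}^+$, is matched correctly on both sides. Everything beyond that is routine bookkeeping with the gluing.
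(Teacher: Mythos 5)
This proposition is imported verbatim from Karakurt--Lidman (\cite[Proposition 3.6]{KarLid}) and the paper gives no proof of its own, so there is nothing internal to compare against; your argument is correct and is essentially the standard argument behind the cited result. The mechanism you identify — $\Gamma_\tau$ is a gluing of nested rays, so along a strictly monotone run of $\tau$ the intermediate rays are absorbed and $(\Gamma_\tau,\chi)$ depends only on the values of $\tau$ at its local extrema, which a refinement (inserting strictly intermediate, strictly monotone values) cannot create, destroy, or move — does yield that refinements leave the graded root literally unchanged (no degree shift even), hence $\Hb(\Gamma)$ and $\Hp$ are unchanged, and merges follow as inverses of refinements.
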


\subsection{Results}

We are now ready to prove Lemma \ref{DeltaCond}

\begin{lemma}
If a $\Delta$-sequence for a graded root $\Gamma$ contains $x$ such that $\Delta(x) = k+1$ and $x'>x$ such that $\Delta(x') = -(k+1)$
then $$U^k \cdot \mathbb{H}_{red}(\Gamma) \neq 0.$$
\label{DeltaCond}
\end{lemma}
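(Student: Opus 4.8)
The plan is to work directly with the $\Z[U]$-module $\Hb(\Gamma)$ built from the $\tau$-sequence, and to exhibit an explicit element of $\Hp = \Hb(\Gamma)/\T$ that survives multiplication by $U^k$. Since refinements and merges do not change $\Hb(\Gamma)$ or $\Hp$, I may assume the $\Delta$-sequence has entries $\pm 1$ only; in that normalization the existence of $x$ with $\Delta(x)=k+1$ and $x'>x$ with $\Delta(x')=-(k+1)$ becomes the existence of a stretch of $(k+1)$ consecutive $+1$'s in the $\Delta$-sequence followed (not necessarily immediately) by a stretch of $(k+1)$ consecutive $-1$'s. Translating to the $\tau$-sequence via $\tau(n+1)-\tau(n)=\Delta(x_n)$: there is an index where $\tau$ ascends by $k+1$ over $k+1$ steps, i.e. hits a local maximum $M$ that lies at least $k+1$ above the preceding local minimum, and later $\tau$ descends by $k+1$ over $k+1$ steps, i.e. comes down from a local maximum $M'$ at least $k+1$ above the following local minimum.

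First I would recall how $\Gamma_\tau$ encodes these features: each local minimum of $\tau$ of value $m$ produces a leaf of the graded root at grading $m$, and the two ascending/descending runs around a local maximum of value $M$ meet at a vertex of grading $M$. So the hypothesis guarantees a leaf vertex $v$ at some grading $m$ such that the branch above $v$ rises monotonically for at least $k+1$ gradings before merging into the rest of the tree at a vertex of grading $\geq m+k+1$; symmetry (or the second half of the hypothesis) guarantees such a branch exists. The key step is then to identify the class of the leaf vertex $v$ in $\Hp$. Because $\T$ is the image of $U^n$ for large $n$, and $\T$ in grading $2j$ is supported on the unique vertex at grading $j$ when $j$ is large, one checks that the leaf vertex $v$ is not in $\T$ (a leaf is never in the image of $U$), so $[v]\neq 0$ in $\Hp$. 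Next, $U$ applied to $v$ moves it one step up the branch (to the unique neighbor of larger... actually smaller $\chi$ — I must be careful with the convention: here $U$ lowers degree, hence sends a vertex to its neighbor of smaller $\chi$, so on a leaf $U$ acts by moving toward the merge point along the branch). Iterating, $U^j\cdot v$ is the vertex at grading $m+j$ along this branch, for $0 \le j \le k$, since the branch is simple (no splitting) up to grading $m+k+1$.

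Then the crux is to show $U^k\cdot v \neq 0$ in $\Hp$, i.e. that the vertex $w$ at grading $m+k$ along this branch is not yet absorbed into the tower $\T$. This is where the "$x'>x$ with $\Delta(x')=-(k+1)$" half of the hypothesis — equivalently the symmetry $\Delta(n)=-\Delta(N_0-n)$ — does real work: it guarantees the branch really has "width two" up through grading $m+k$, so that the tower (which occupies only one vertex per grading) cannot cover $w$. Concretely, I would argue that along the initial ascending run the graded root has at least two vertices in each grading $m+1,\dots,m+k$ (one on our branch, one elsewhere), so the quotient $\Hb(\Gamma)/\T$ is nonzero in each such grading and the particular vertex $w$ represents a nonzero class; a clean way to see this is to use the merge moves to reduce to a minimal $\Delta$-sequence and then read off $\Hp$ grading-by-grading as in Karakurt–Lidman's description. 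Thus $U^k\cdot[v] = [w] \neq 0$, proving $U^k\cdot \mathbb{H}_{red}(\Gamma)\neq 0$.

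The main obstacle I anticipate is the bookkeeping in the last step: making precise that the tower $\T$ meets each relevant grading in exactly one vertex while our branch contributes a genuinely different vertex, and that this persists all the way up to grading $m+k$ (not fewer). The symmetry property $\Delta(n) = -\Delta(N_0-n)$ and the structure of $X$ should make this automatic, but one has to handle the possibility that the ascending run of $+1$'s and the descending run of $-1$'s interact with the rest of the $\tau$-sequence in a way that prematurely merges the branch; ruling that out is the technical heart. I expect the cleanest route is to normalize via refinement to the all-$\pm1$ case, locate the sub-$\tau$-sequence of the form "down to $m$, up by $k+1$, $\dots$, up by $k+1$ somewhere later, down to a minimum", and invoke the leaf-and-branch picture together with Karakurt–Lidman's Proposition on invariance of $\Hp$ to conclude.
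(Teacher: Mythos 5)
Your overall strategy is the same as the paper's (pass to the graded root, locate two branches that stay separate through $k+1$ consecutive gradings, and detect nontriviality in the quotient by the tower), but the central computational step is backwards. With the convention used here, $U$ is a degree $-2$ endomorphism sending a vertex to the sum of its neighbors of \emph{smaller} $\chi$; in particular $U$ annihilates a leaf, since a leaf's only neighbor lies above it. So the assertion that ``$U^j\cdot v$ is the vertex at grading $m+j$ along this branch'' is false --- in fact $U\cdot v=0$ --- and the crux claim $U^k\cdot v=w\neq 0$ cannot be repaired by bookkeeping: the argument must be run in the opposite direction, as the paper does. One takes the vertex $a$ sitting $k$ levels \emph{above} the relevant local minimum and shows that $U^k a$ (the vertex at the minimum, or more generally a partial sum of the grading-$m$ vertices below $a$) does not lie in $\T$. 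Note also that showing $a\notin\T$ and $U^k a\neq 0$ in $\Hb(\Gamma)$ is not what is needed; one needs precisely $U^k a\notin\T$, and this is where the second hypothesis does its work: the other jump of size $k+1$ produces a second deep minimum whose branch passes through grading $m$ without merging into $a$'s branch below grading $m+k+1$, while the grading-$m$ part of $\T$ is spanned by the \emph{sum of all} vertices at that grading --- not ``one vertex per grading'' as you assert --- so it cannot contain $U^k a$.

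Two further inaccuracies feed into this. First, ``a leaf is never in the image of $U$'' is false: for the graded root of a monotone $\tau$-sequence the unique leaf is the bottom of the tower and $\Hp=0$; nonvanishing of a class in $\Hp$ comes from the existence of a second vertex in the same grading, not from leaf-ness. Second, you never choose \emph{which} of the two minima to base the argument at, and the choice matters: if the minimum created by $\Delta(x')=-(k+1)$ sits higher than the earlier one, the earlier leaf can be the unique vertex in its grading for many levels and your ``width two through $m+k$'' fails for it; one must then work over the later minimum and use the earlier, deeper branch to supply the extra vertex. This is exactly the case split $\tau_\Delta(i)>\tau_\Delta(j+1)$ versus $\tau_\Delta(i)<\tau_\Delta(j+1)$ in the paper's proof. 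Finally, the symmetry $\Delta(n)=-\Delta(N_0-n)$ plays no role in this lemma --- the hypothesis already supplies the $-(k+1)$ value --- so invoking it conflates the lemma with its later applications. With the $U$-direction corrected, the element chosen at the appropriate minimum, and the tower described correctly, your outline becomes the paper's argument.
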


\begin{proof}
Since $\Delta(x) = k+1$ we have $$\tau_{\Delta}(i+1) - \tau_{\Delta}(i) = k+1$$ where $x$ is in the $i^{th}$ position in our $\Delta$-sequence. Similarly since $\Delta(x') = -(k+1)$ we have $$\tau_{\Delta}(j) - \tau_{\Delta}(j+1) = k+1$$ where $x'$ is in the $j^{th}$ position in our $\Delta$-sequence. Since we assumed $x' > x$, we know that $j>i$. It is also clear that $$\tau_{\Delta}(i+1) > \tau_{\Delta}(i) \quad \text{ and } \quad \tau_{\Delta}(j) > \tau_{\Delta}(j+1).$$ We now have two cases. First let $\tau_{\Delta}(i) > \tau_{\Delta}(j+1)$. This is shown in Figure \ref{Case1}. We have that $$U^k \cdot a = b$$ in this case, and since $a$ and $c$ are both at grading $n+k$, $a$ is not in the image of $U^z$ for large $z$. Thus $a \in \mathbb{H}_{red}(\Gamma)$ and $U^k \cdot a \neq 0$ so $$U^k \cdot \mathbb{H}_{red}(\Gamma) \neq 0.$$

Now consider when $\tau_\Delta(i) > \tau_\Delta(j+1)$, which is shown in Figure \ref{Case2}. We can proceed exactly as above to find that  $$U^k \cdot \mathbb{H}_{red}(\Gamma) \neq 0.$$ Thus our proof is complete. 

\begin{figure}
  \begin{minipage}{.5\textwidth}
  \centering
\begin{tikzpicture}[thick,scale=0.5, every node/.style={scale=0.5}]
%Grading
    %Top Dots
    \node[hackennode] (dot1) at (-6,4.25) {};
    \node[hackennode] (dot1) at (-6,4.5) {};
    \node[hackennode] (dot1) at (-6,4.75) {};
    %n+k+3
    \node at (-6,3) {$n+k+3$};
    \draw[gray, dashed] (-5,3)--(5,3);
    %n+k+2
    \node at (-6,2) {$n+k+2$};
    \draw[gray, dashed] (-5,2)--(5,2);
    %n+k+1
    \node at (-6,1) {$n+k+1$};
    \draw[gray, dashed] (-5,1)--(5,1);
    %n+k
    \node at (-6,0) {$n+k$};
    \draw[gray, dashed] (-5,0)--(5,0);
    %Upper Center Dots
    \node[hackennode] (dot1) at (-6,-1.25) {};
    \node[hackennode] (dot1) at (-6,-1.5) {};
    \node[hackennode] (dot1) at (-6,-1.75) {};
    %n+1
    \node at (-6,-3) {$n+1$};
    \draw[gray, dashed] (-5,-3)--(5,-3);
    %n
    \node at (-6,-4) {$n$};
    \draw[gray, dashed] (-5,-4)--(5,-4);
    %Lower Center Dots
    \node[hackennode] (dot1) at (-6,-5.75) {};
    \node[hackennode] (dot1) at (-6,-5.5) {};
    \node[hackennode] (dot1) at (-6,-5.25) {};
    %m+1
    \node at (-6,-7) {$m+1$};
    \draw[gray, dashed] (-5,-7)--(5,-7);
    %m
    \node at (-6,-8) {$m$};
    \draw[gray, dashed] (-5,-8)--(5,-8);
    %Bottom Dots
    \node[hackennode] (dot1) at (-6,-9.75) {};
    \node[hackennode] (dot1) at (-6,-9.5) {};
    \node[hackennode] (dot1) at (-6,-9.25) {};

%Left Dots
\node[hackennode] (Ldot1) at (-4.25,-1.5) {};
\node[hackennode] (Ldot2) at (-4,-1.5) {};
\node[hackennode] (Ldot3) at (-4.5,-1.5) {};

%Middle Dots
\node[hackennode] (Mdot1) at (-.25,-1.5) {};
\node[hackennode] (Mdot2) at (0,-1.5) {};
\node[hackennode] (Mdot3) at (.25,-1.5) {};

%Right Dots
\node[hackennode] (Rdot1) at (3.75,-1.5) {};
\node[hackennode] (Rdot2) at (4,-1.5) {};
\node[hackennode] (Rdot3) at (4.25,-1.5) {};

%Tau i - Called A
    %Nodes
    \node[hackennode] (An) at (-3,-4) {};
    \node[hackennode] (An+1) at (-3,-3) {};
    %Lower Dots
    \node[hackennode] (Da) at (-3,-2.5) {};
    \node[hackennode] (Db) at (-3,-1.25) {};
    \node[hackennode] (Dc) at (-3,-1.5) {};
    \node[hackennode] (Dd) at (-3,-1.75) {};
    \node[hackennode] (De) at (-3,-.5) {};
    %Nodes
    \node[hackennode] (An+k) at (-3,0) {};
    \node[hackennode] (An+k+1) at (-3,1) {};
    \node[hackennode] (An+k+2) at (-3,2) {};
    \node[hackennode] (An+k+3) at (-3,3) {};
    %Top Dots
    \node[hackennode] (Df) at (-3,3.5) {};
    \node[hackennode] (Dg) at (-3,4.75) {};
    \node[hackennode] (Dh) at (-3,4.5) {};
    \node[hackennode] (Di) at (-3,4.25) {};
    %Lines
    \draw[very thick, blue] (An) -- (An+1) -- (Da);
    \draw[very thick, blue] (De) -- (An+k) -- (An+k+1) -- (An+k+2) -- (An+k+3) -- (Df) ;
    %Labels
    \node[below] (Lj+1) at (-3,-4.25) {$\tau_{\Delta}(i)$};
    \node[left] (a) at (-3,.25) {\textbf{a}};
    \node[left] (a) at (-3,-3.75) {\textbf{b}};
    
%Tau i+1 - Called B
    %Nodes
    \node[hackennode] (Bn+k+1) at (-1.5,1) {};
    \node[hackennode] (Bn+k+2) at (-1.5,2) {};
    \node[hackennode] (Bn+k+3) at (-1.5,3) {};
    %Top Dots
    \node[hackennode] (Bf) at (-1.5,3.5) {};
    \node[hackennode] (Bg) at (-1.5,4.75) {};
    \node[hackennode] (Bh) at (-1.5,4.5) {};
    \node[hackennode] (Bi) at (-1.5,4.25) {};
    %Lines
    \draw[very thick, blue] (Bn+k+1) -- (Bn+k+2) -- (Bn+k+3) -- (Bf);
    %Label
    \node[below] (Lj+1) at (-1.5,.75) {$\tau_{\Delta}(i+1)$};
    
%Tau j+1 - Called C
    %Nodes
    \node[hackennode] (Cm) at (2,-8) {};
    \node[hackennode] (Cm+1) at (2,-7) {};
    %Lower Dots
    \node[hackennode] (Cv) at (2,-6.5) {};
    \node[hackennode] (Cw) at (2,-5.75) {};
    \node[hackennode] (Cx) at (2,-5.5) {};
    \node[hackennode] (Cy) at (2,-5.25) {};
    \node[hackennode] (Cz) at (2,-4.5) {};
    %Nodes
    \node[hackennode] (Cn) at (2,-4) {};
    \node[hackennode] (Cn+1) at (2,-3) {};
    %Middle Dots
    \node[hackennode] (Ca) at (2,-2.5) {};
    \node[hackennode] (Cb) at (2,-1.25) {};
    \node[hackennode] (Cc) at (2,-1.5) {};
    \node[hackennode] (Cd) at (2,-1.75) {};
    \node[hackennode] (Ce) at (2,-.5) {};
    %Nodes
    \node[hackennode] (Cn+k) at (2,0) {};
    \node[hackennode] (Cn+k+1) at (2,1) {};
    \node[hackennode] (Cn+k+2) at (2,2) {};
    \node[hackennode] (Cn+k+3) at (2,3) {};
    %Top Dots
    \node[hackennode] (Cf) at (2,3.5) {};
    \node[hackennode] (Cg) at (2,4.75) {};
    \node[hackennode] (Ch) at (2,4.5) {};
    \node[hackennode] (Ci) at (2,4.25) {};
    %Lines
    \draw[very thick, blue] (Cz) -- (Cn) -- (Cn+1) -- (Ca);
    \draw[very thick, blue] (Ce) -- (Cn+k) -- (Cn+k+1) -- (Cn+k+2) -- (Cn+k+3) -- (Cf) ;
    \draw[very thick, blue] (Cm) -- (Cm+1) -- (Cv);
    %Label
    \node[below] (Lj+1) at (2,-8.25) {$\tau_{\Delta}(j+1)$};
    \node[left] (c) at (2,.25) {\textbf{c}};
\end{tikzpicture}
\caption{$\tau_\Delta(i) > \tau_\Delta(j+1)$}
    \label{Case1}
\end{minipage}
\begin{minipage}{.5\textwidth}
\centering
\begin{tikzpicture}[thick,scale=0.5, every node/.style={scale=0.5}]
%Grading
    %Top Dots
    \node[hackennode] (dot1) at (-6,4.25) {};
    \node[hackennode] (dot1) at (-6,4.5) {};
    \node[hackennode] (dot1) at (-6,4.75) {};
    %n+k+3
    \node at (-6,3) {$n+k+3$};
    \draw[gray, dashed] (-5,3)--(5,3);
    %n+k+2
    \node at (-6,2) {$n+k+2$};
    \draw[gray, dashed] (-5,2)--(5,2);
    %n+k+1
    \node at (-6,1) {$n+k+1$};
    \draw[gray, dashed] (-5,1)--(5,1);
    %n+k
    \node at (-6,0) {$n+k$};
    \draw[gray, dashed] (-5,0)--(5,0);
    %Upper Center Dots
    \node[hackennode] (dot1) at (-6,-1.25) {};
    \node[hackennode] (dot1) at (-6,-1.5) {};
    \node[hackennode] (dot1) at (-6,-1.75) {};
    %n+1
    \node at (-6,-3) {$n+1$};
    \draw[gray, dashed] (-5,-3)--(5,-3);
    %n
    \node at (-6,-4) {$n$};
    \draw[gray, dashed] (-5,-4)--(5,-4);
    %Lower Center Dots
    \node[hackennode] (dot1) at (-6,-5.75) {};
    \node[hackennode] (dot1) at (-6,-5.5) {};
    \node[hackennode] (dot1) at (-6,-5.25) {};
    %m+1
    \node at (-6,-7) {$m+1$};
    \draw[gray, dashed] (-5,-7)--(5,-7);
    %m
    \node at (-6,-8) {$m$};
    \draw[gray, dashed] (-5,-8)--(5,-8);
    %Bottom Dots
    \node[hackennode] (dot1) at (-6,-9.75) {};
    \node[hackennode] (dot1) at (-6,-9.5) {};
    \node[hackennode] (dot1) at (-6,-9.25) {};

%Left Dots
\node[hackennode] (Ldot1) at (-4.25,-1.5) {};
\node[hackennode] (Ldot2) at (-4,-1.5) {};
\node[hackennode] (Ldot3) at (-4.5,-1.5) {};

%Middle Dots
\node[hackennode] (Mdot1) at (-.25,-1.5) {};
\node[hackennode] (Mdot2) at (0,-1.5) {};
\node[hackennode] (Mdot3) at (.25,-1.5) {};

%Right Dots
\node[hackennode] (Rdot1) at (4.75,-1.5) {};
\node[hackennode] (Rdot2) at (4.5,-1.5) {};
\node[hackennode] (Rdot3) at (4.25,-1.5) {};

%Tau j+1 - Called A
    %Nodes
    \node[hackennode] (An) at (3,-4) {};
    \node[hackennode] (An+1) at (3,-3) {};
    %Lower Dots
    \node[hackennode] (Da) at (3,-2.5) {};
    \node[hackennode] (Db) at (3,-1.25) {};
    \node[hackennode] (Dc) at (3,-1.5) {};
    \node[hackennode] (Dd) at (3,-1.75) {};
    \node[hackennode] (De) at (3,-.5) {};
    %Nodes
    \node[hackennode] (An+k) at (3,0) {};
    \node[hackennode] (An+k+1) at (3,1) {};
    \node[hackennode] (An+k+2) at (3,2) {};
    \node[hackennode] (An+k+3) at (3,3) {};
    %Top Dots
    \node[hackennode] (Df) at (3,3.5) {};
    \node[hackennode] (Dg) at (3,4.75) {};
    \node[hackennode] (Dh) at (3,4.5) {};
    \node[hackennode] (Di) at (3,4.25) {};
    %Lines
    \draw[very thick, blue] (An) -- (An+1) -- (Da);
    \draw[very thick, blue] (De) -- (An+k) -- (An+k+1) -- (An+k+2) -- (An+k+3) -- (Df) ;
    %Labels
    \node[below] (Lj+1) at (3,-4.25) {$\tau_{\Delta}(j+1)$};
    \node[left] (a) at (3,.25) {\textbf{a}};
    \node[left] (a) at (3,-3.75) {\textbf{b}};
    
%Tau j - Called B
    %Nodes
    \node[hackennode] (Bn+k+1) at (1.5,1) {};
    \node[hackennode] (Bn+k+2) at (1.5,2) {};
    \node[hackennode] (Bn+k+3) at (1.5,3) {};
    %Top Dots
    \node[hackennode] (Bf) at (1.5,3.5) {};
    \node[hackennode] (Bg) at (1.5,4.75) {};
    \node[hackennode] (Bh) at (1.5,4.5) {};
    \node[hackennode] (Bi) at (1.5,4.25) {};
    %Lines
    \draw[very thick, blue] (Bn+k+1) -- (Bn+k+2) -- (Bn+k+3) -- (Bf);
    %Label
    \node[below] (Lj+1) at (1.5,.75) {$\tau_{\Delta}(j)$};
    
%Tau i - Called C
    %Nodes
    \node[hackennode] (Cm) at (-2,-8) {};
    \node[hackennode] (Cm+1) at (-2,-7) {};
    %Lower Dots
    \node[hackennode] (Cv) at (-2,-6.5) {};
    \node[hackennode] (Cw) at (-2,-5.75) {};
    \node[hackennode] (Cx) at (-2,-5.5) {};
    \node[hackennode] (Cy) at (-2,-5.25) {};
    \node[hackennode] (Cz) at (-2,-4.5) {};
    %Nodes
    \node[hackennode] (Cn) at (-2,-4) {};
    \node[hackennode] (Cn+1) at (-2,-3) {};
    %Middle Dots
    \node[hackennode] (Ca) at (-2,-2.5) {};
    \node[hackennode] (Cb) at (-2,-1.25) {};
    \node[hackennode] (Cc) at (-2,-1.5) {};
    \node[hackennode] (Cd) at (-2,-1.75) {};
    \node[hackennode] (Ce) at (-2,-.5) {};
    %Nodes
    \node[hackennode] (Cn+k) at (-2,0) {};
    \node[hackennode] (Cn+k+1) at (-2,1) {};
    \node[hackennode] (Cn+k+2) at (-2,2) {};
    \node[hackennode] (Cn+k+3) at (-2,3) {};
    %Top Dots
    \node[hackennode] (Cf) at (-2,3.5) {};
    \node[hackennode] (Cg) at (-2,4.75) {};
    \node[hackennode] (Ch) at (-2,4.5) {};
    \node[hackennode] (Ci) at (-2,4.25) {};
    %Lines
    \draw[very thick, blue] (Cz) -- (Cn) -- (Cn+1) -- (Ca);
    \draw[very thick, blue] (Ce) -- (Cn+k) -- (Cn+k+1) -- (Cn+k+2) -- (Cn+k+3) -- (Cf) ;
    \draw[very thick, blue] (Cm) -- (Cm+1) -- (Cv);
    %Label
    \node[below] (Lj+1) at (-2,-8.25) {$\tau_{\Delta}(i)$};
    \node[left] (c) at (-2,.25) {\textbf{c}};
\end{tikzpicture}
\caption{$\tau_\Delta(i) < \tau_\Delta(j+1)$}
    \label{Case2}
\end{minipage}
\end{figure}
\end{proof}

Lemma \ref{DeltaCond} is equivalent to requiring a sequence of values $(x_1, \dots, x_j)$ and $(x'_1,\dots,x'_l)$ that after the appropriate refinements or merges we have $$\{\dots, k+1, \dots, -(k+1),\dots\}$$ in our $\Delta$-sequence.
There is no need to prove this separately as we know refinements and merges do not change $\mathbb{H}_{red}$. \par

The next Lemma follows easily. 

\kcond*

\begin{proof}
Consider $n = kp_1p_2\cdots p_l$. We have that 
\begin{align*}
\Delta(n) &= 1+|e_0|n- \displaystyle \sum_{i=1}^l\dfrac{np_i'}{p_i}\\
&= 1 + |e_0|n - k \cdot \left( p_1'p_2\cdots p_l + p_1p_2'\cdots p_l + \cdot + p_1p_2\cdots p_l'\right)\\
&= 1+ |e_0|n - k\left(-1 - e_0p_1\cdots p_l\right)\\
&= 1 + k +|e_0|kp_1\cdots p_l + ke_0p_1\cdots p_l\\
&= k+1,
\end{align*}
using substitution from the Diophantine equation.
By using that $\Delta(x) = -\Delta(N_0-x)$ we have that $$\Delta(N_0 - n) = -(k+1).$$ Now using our assumptions on $k$ we have the following,
\begin{align*}
    n &= kp_1\cdots p_l \\
    n&< \dfrac{1}{2}\left( - 2 - \displaystyle \sum_{i=1}^l\dfrac{1}{p_i}\right)p_1\cdots p_l \\
    n&<\dfrac{1}{2}N_0\\
    2n &< N_0\\
    n&< N_0 - n.
\end{align*}
Thus $\Delta(n) = k+1$, $\Delta(N_0 - n) = -(k+1)$ and $N_0 - n > n$ and so the conditions of Lemma \ref{DeltaCond} are satisfied and $U^k\cdot HF_{red}(Y) \neq 0$. 
\end{proof}

\section{Five Singular Fibers}\label{FiveSingularFibers}

The goal of this section is to prove Theorem \ref{5singfibersthm}. Our strategy is to find a sequence of values $(x, x+1, x+2, \dots, x+j)$, where $x+j < \nicefrac{N_0}{2}$ such that $$(\Delta(x), \Delta(x+1), \dots, \Delta(x+j-1), \Delta(x+j) = (1, 0, \dots, 0, 1).$$ Then by \eqref{DeltaCond} we have that 
$$(\Delta(N_0 - x), \Delta(N_0 - x+1), \dots, \Delta(N_0 - x+j-1), \Delta(N_0 - x+j) = (-1, 0, \dots, 0, -1).$$
By performing the appropriate merges we have a $\Delta$-sequence as follows
$$\{\dots, 2, \dots, -2\}.$$
Then the proof of Theorem \ref{5singfibersthm} follows from Lemma \ref{DeltaCond}.\par 

\vspace{5mm}

First consider the following corollary to Lemma \ref{kcond}.

\begin{cor}
Let $Y = \Sigma(p_1,p_2,p_3,p_4,p_5).$ Then $$U\cdot HF_{red}(Y) \neq 0,$$ when one of the following holds,
\begin{enumerate}
\item $p_1 \geq 4$,
\item $p_1 = 3$ and $p_5 \geq 17$,
\item $p_1 = 2$, $p_2 = 3$, and $p_3 \geq 17$,
\item $p_1 = 2$, $ p_2 = 3$, $p_3 = 7$ and $p_4 \geq 83$,
\item $p_1 = 2$, $p_2 = 3$, $p_3 = 7$, $p_4 = 43$, $p_5 \geq 1811$,
\item $p_1 = 2$, $p_2 = 3$, $p_3 = 11$, $p_4 \geq 15$ and $p_5 \geq 101$.
\end{enumerate}
\label{5singfiberscor}
\end{cor}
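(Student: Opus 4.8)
The plan is to deduce the corollary directly from Lemma~\ref{kcond} applied with $l = 5$ and $k = 1$. For these parameters the hypothesis $0 \le k < \tfrac12\bigl(l - 2 - \sum_{i=1}^l \tfrac1{p_i}\bigr)$ of Lemma~\ref{kcond} reduces to the single inequality $\sum_{i=1}^5 \tfrac1{p_i} < 1$, while its conclusion is exactly $U\cdot HF_{red}(Y)\neq 0$. So the entire problem becomes number-theoretic: show that each of the six hypotheses listed forces $\sum_{i=1}^5 \tfrac1{p_i} < 1$. Throughout one uses, for free, that $p_1 < p_2 < \cdots < p_5$ and that the $p_i$ are pairwise coprime, since $Y$ is a Seifert fibered \emph{integral homology} sphere.

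For each case I would argue by monotonicity: $\sum 1/p_i$ is maximized when every index not pinned down by the hypothesis is taken as small as the ordering and the pairwise-coprimality constraints permit, so it suffices to bound $\sum 1/p_i$ for that single extremal tuple. In case~(1), $p_i \ge i + 3$ gives $\sum 1/p_i \le \tfrac14 + \tfrac15 + \tfrac16 + \tfrac17 + \tfrac18 = \tfrac{743}{840} < 1$. In case~(2), fixing $p_1 = 3$ forces $p_2 \ge 4$, $p_3 \ge 5$, and $p_4 \ge 7$ (coprimality with $3$ rules out $6$), so together with $p_5 \ge 17$ one gets $\sum 1/p_i \le \tfrac13 + \tfrac14 + \tfrac15 + \tfrac17 + \tfrac1{17} < 1$. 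Cases~(3)--(6) follow the same recipe: fix the prescribed small values, push the remaining $p_i$ up to their smallest admissible values using coprimality (after $2,3,7$ the candidates for $p_4$ are $43, 47, 53, \dots$; after $2,3,11$ they are $13, 17, 19, \dots$), and the prescribed lower bounds then drag the total strictly below $1$.

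The conceptual content is light, so the only real obstacle is arithmetic sharpness: in cases~(2)--(6) the extremal value of $\sum 1/p_i$ is extraordinarily close to $1$. For instance $\tfrac13 + \tfrac14 + \tfrac15 + \tfrac16 + \tfrac1{17} > 1$, so in case~(2) one genuinely needs the coprimality bound $p_4 \ge 7$ and not merely $p_4 \ge 6$; and $\tfrac12 + \tfrac13 + \tfrac17 + \tfrac1{43} = \tfrac{1805}{1806}$, leaving a window of width only $\tfrac1{1806}$ for $1/p_5$ in case~(5). Accordingly I would do each estimate via an exact common-denominator comparison rather than with decimals, and would pay particular attention to the boundary tuples near cases~(4) and~(5) --- for example $\Sigma(2,3,7,83,85)$, whose reciprocal sum sits right at the edge --- where the minimal admissible value of $p_5$ above $p_4$ must itself be pinned down by coprimality with $p_1 p_2 p_3 p_4$; should any such tuple fail to satisfy $\sum 1/p_i < 1$, it would have to be deferred to the finer $\Delta$-sequence analysis of Section~\ref{FiveSingularFibers}.
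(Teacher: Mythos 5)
Your reduction is exactly the paper's: Corollary \ref{5singfiberscor} is deduced from Lemma \ref{kcond} with $l=5$, $k=1$, and the paper's stated check $p_1p_2\cdots p_5\le N_0/2$ is literally your inequality $\sum_{i=1}^5 1/p_i<1$, since $N_0=p_1\cdots p_5\bigl(3-\sum_i 1/p_i\bigr)$. Your extremal estimates for cases (1)--(3), (5), (6) are correct; for instance $\tfrac13+\tfrac14+\tfrac15+\tfrac17+\tfrac1{17}=\tfrac{7033}{7140}<1$ in case (2), and in case (3) the coprimality with $6$ is genuinely needed (it forces $p_4\ge 19$, $p_5\ge 23$; with $18$ and $19$ allowed the sum would exceed $1$), exactly as your recipe prescribes.

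The one point you left conditional is, however, an actual failure, and it should be settled rather than hedged. For the admissible tuple $\Sigma(2,3,7,83,85)$ (note $85=5\cdot 17$ is coprime to $2,3,7,83$) one has
\[
\tfrac12+\tfrac13+\tfrac17+\tfrac1{83}+\tfrac1{85}
=\tfrac{41}{42}+\tfrac{168}{7055}
=\tfrac{296311}{296310}>1,
\]
since $168\cdot 42=7056>7055$. So the hypothesis of Lemma \ref{kcond} with $k=1$ fails for this tuple, and neither your argument nor the paper's one-line verification covers it; with $p_4=83$ the method requires $p_5\ge 89$, not merely $p_5>p_4$. Consequently case (4) of the corollary, as literally stated, is not established by this method alone --- this is an oversight in the paper's own proof as much as a gap in your proposal --- and your suggested remedy is the correct one: that single boundary tuple must be handled by the general $\Sigma(2,3,7,p,q)$ analysis, i.e.\ Lemma \ref{237pqlem}, so the downstream results (Theorem \ref{5singfibersthm}) are unaffected. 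Apart from making that deferral explicit (or strengthening the hypothesis of case (4)), your proposal is correct and follows the same route as the paper.
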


\begin{proof}
This simply involves testing that $p_1p_2 \cdots p_5 \leq \nicefrac{N_0}{2}$ in the given cases. Then as $\Delta(p_1p_2 \cdots p_5) = 2$ and $\Delta(N_0 - p_1p_2 \cdots p_5) = -2$ the proposition follows easily. 
\end{proof}

The remaining finite cases for five singular fibers can all be checked to satisfy $U\cdot HF_{red}(Y)$. Thus to prove Theorem \ref{5singfibersthm} we only need to address the following infinite cases: $\Sigma(2,3,5,p,q)$, $\Sigma(2,3,7,p,q)$, $\Sigma(2,3,11,13,p)$.\par

We will address each of these infinite cases separately in the following lemmas, proved later in this section. 

\begin{restatable}[]{lemma}{lemAapq}
Let $Y = \Sigma(2,3,5,p,q),$ for $p \neq 7$. Then $U \cdot HF_{red}(U) \neq 0.$
\label{235pqlem}
\end{restatable}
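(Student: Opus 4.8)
The plan is to apply the strategy outlined at the start of Section \ref{FiveSingularFibers}: for $Y = \Sigma(2,3,5,p,q)$ with $3 < p < q$ coprime and $p \neq 7$, I want to exhibit a value $x$ with $x < N_0/2$ such that the $\Delta$-sequence restricted to $\{x, x+1, \dots, x+j\}$ reads $(1, 0, \dots, 0, 1)$. By the symmetry $\Delta(N_0 - n) = -\Delta(n)$ this forces $(-1, 0, \dots, 0, -1)$ near $N_0 - x$, and after merging the two $1$'s into a single $2$ (and likewise the two $-1$'s into a $-2$) the resulting $\Delta$-sequence contains $\{\dots, 2, \dots, -2\}$, so Lemma \ref{DeltaCond} with $k = 1$ gives $U \cdot HF_{red}(Y) \neq 0$. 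So the whole problem reduces to finding the right $x$ and verifying its $\Delta$-values.

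The natural candidate, mimicking Lemma \ref{kcond}'s proof and Corollary \ref{5singfiberscor}, is something built out of $p_1 p_2 p_3 = 30$ together with the continued-fraction/Seifert data. Since Corollary \ref{5singfiberscor} already handles $p = 2,3,7$ cases and small ranges via $x = p_1\cdots p_5 = 30pq$, the remaining work is precisely the cases where $30pq > N_0/2$, i.e. where that naive choice is too big. First I would write $N_0 = 30pq\bigl(3 - \tfrac12 - \tfrac13 - \tfrac15 - \tfrac1p - \tfrac1q\bigr) = 30pq \cdot \tfrac{59}{30} - q p(\text{lower order}) $, more precisely $N_0 = 59 pq - 30 q - 30 p$ roughly (up to the exact arithmetic with the $p_i'$), and then look for $x$ of the form $x = 6pq$ or $x = 6q$ or a small combination, compute $\Delta$ there using \eqref{Delta}, and check it equals $1$; then search a short window $x, x+1, \dots$ for the next place $\Delta$ is nonzero and confirm it is again $1$ before $N_0/2$. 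The key identity that makes $\Delta$ computable is the Diophantine relation $e_0 p_1\cdots p_l + \sum_i p_1\cdots p_i' \cdots p_l = -1$, which lets ceiling sums collapse to integers at multiples of partial products, exactly as in the proof of Lemma \ref{kcond}.

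The main obstacle I expect is that, unlike the clean single computation in Lemma \ref{kcond}, here I need a window of consecutive integers on which $\Delta$ vanishes and then jumps to $1$, and controlling the ceiling functions $\lceil n p_i'/p_i \rceil$ over a window — where the residues $n p_i' \bmod p_i$ vary — requires knowing the $p_i'$ well enough to ensure no intermediate $\Delta(x+m)$ is negative or larger than $1$. Since $p_i'$ depends on $p$ and $q$ through the Diophantine equation, I would likely split into congruence classes of $p$ and $q$ modulo $2, 3, 5$ (finitely many), solve for $e_0, p_i'$ symbolically in each class, and then the $\Delta$-computation over the window becomes a finite piecewise-linear check. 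The condition $p \neq 7$ presumably enters because for $p = 7$ the chosen $x$ collides with $N_0/2$ or the window runs past it, which is exactly the boundary case peeled off into a separate lemma for $\Sigma(2,3,7,p,q)$.

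An alternative, possibly cleaner route worth trying first: rather than hunting for a $(1,0,\dots,0,1)$ pattern directly, find a single $x < N_0/2$ with $\Delta(x) = 2$ (as in Corollary \ref{5singfiberscor}), which immediately gives the $\{\dots,2,\dots,-2\}$ pattern by symmetry and Lemma \ref{DeltaCond}. This requires $x p_i'/p_i$ to have the right fractional parts so that $1 + |e_0| x - \sum \lceil x p_i'/p_i\rceil = 2$; taking $x$ a multiple of $30$ but not a multiple of $p$ or $q$ and tuning it should work. I would attempt to show $x = 30 \lfloor (N_0/2)/30 \rfloor$-type choices, or more robustly $x = 30 t$ for a well-chosen $t$ depending on residues, lands a $\Delta$-value of exactly $2$ strictly below $N_0/2$, with the $p \neq 7$ exclusion accounting for the one residue configuration where the estimate $30pq$-type bound just fails. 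Whichever of the two patterns turns out to have the shorter case analysis is the one I would write up; I expect both to ultimately reduce to a finite check over residues of $p, q$ modulo $30$.
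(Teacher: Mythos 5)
Your high-level skeleton matches the paper's strategy (find a $(1,0,\dots,0,1)$ window below $\nicefrac{N_0}{2}$, invoke the symmetry $\Delta(N_0-n)=-\Delta(n)$, merge, and apply Lemma \ref{DeltaCond} with $k=1$), but the proposal has two concrete problems. First, the ``alternative, possibly cleaner route'' you say you would try first is provably impossible. From the Diophantine equation, $|e_0| = \tfrac12 + \tfrac{a}{3} + \tfrac{b}{5} + \tfrac{p'}{p} + \tfrac{q'}{q} + \tfrac{1}{30pq}$, so
\[
\Delta(n) \;=\; 1 + \frac{n}{30pq} - \sum_{i=1}^{5}\left(\left\lceil \frac{np_i'}{p_i}\right\rceil - \frac{np_i'}{p_i}\right) \;\leq\; 1 + \frac{n}{30pq},
\]
and since $N_0 = 59pq - 30p - 30q < 60pq$, every $n < \nicefrac{N_0}{2} < 30pq$ satisfies $\Delta(n) < 2$, hence $\Delta(n)\leq 1$. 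There is no single $x < \nicefrac{N_0}{2}$ with $\Delta(x)=2$; this is exactly why the $(1,0,\dots,0,1)$ pattern is unavoidable here, and why $\Sigma(2,3,5,p,q)$ is not covered by Corollary \ref{5singfiberscor}.

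Second, the main route as you describe it rests on a false premise: reducing mod $2,3,5$ determines only $a$ and $b$ and bounds $e_0\in\{-1,-2,-3\}$; it does \emph{not} determine $p'$ and $q'$, which are essentially modular inverses depending on the actual values of $p$ and $q$, so there is no ``finite symbolic'' list of $(e_0,p_i')$ per congruence class and your window check is not finite as stated. This is why the choice of test points matters: the paper uses multiples of $pq$ (e.g.\ $25pq$, $26pq$, $27pq$ and nearby offsets), at which the two uncontrolled ceilings $\lceil np'/p\rceil$, $\lceil nq'/q\rceil$ collapse exactly; your suggestion $x=6pq$ does give $\Delta(6pq)=1$ by the same mechanism, but multiples of $30$ alone do not, and the real work is producing the \emph{second} consecutive $+1$, whose existence and location depend on $e_0$, on $pq\bmod 30$, and on where $\nicefrac{p'}{p}$ and $\nicefrac{q'}{q}$ sit. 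In the hard classes (e.g.\ $pq\equiv 1,13,17,29 \bmod 30$ with the extremal $e_0$) no value of the form $xpq\pm k$ with small $k$ works, and the paper must instead use values built from $p'q$ and $pq'$ such as $15pq+30pq'$, $60pq'-8pq$, $45pq-30p'q$, stratified by which subinterval of $(0,1)$ contains $\nicefrac{p'}{p}$ and $\nicefrac{q'}{q}$, together with separate verifications that these lie below $\nicefrac{N_0}{2}$ (this is also where the exclusions $p\neq 7$ and the residual finite families for small $p,q$ genuinely enter, handled by Lemma \ref{2357pqlem} and by direct checks). Your proposal anticipates none of these Type~B values, so as written it would stall precisely on the cases that constitute the bulk of the paper's proof.
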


\begin{restatable}[]{lemma}{lemAbpq}
Let $Y = \Sigma(2,3,5,7,q).$ Then $U \cdot HF_{red}(U) \neq 0.$
\label{2357pqlem}
\end{restatable}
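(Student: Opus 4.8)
The plan is to exhibit, for every admissible $q$ (coprime to $2,3,5,7$ and large enough that $\Sigma(2,3,5,7,q)$ has five singular fibers), a short arithmetic-progression block $(x, x+1, \dots, x+j)$ below $N_0/2$ on which the function $\Delta$ of \eqref{Delta} takes the values $(1, 0, \dots, 0, 1)$; by the symmetry $\Delta(n) = -\Delta(N_0 - n)$ this forces a mirrored block with values $(-1, 0, \dots, 0, -1)$ above $N_0/2$, and after the appropriate merges the $\Delta$-sequence contains $\{\dots, 2, \dots, -2, \dots\}$, so Lemma \ref{DeltaCond} with $k = 1$ gives $U \cdot \mathbb{H}_{red}(\Gamma) \neq 0$, i.e. $U \cdot HF_{red}(Y) \neq 0$. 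The natural first candidate is the single point $x = 2\cdot 3\cdot 5\cdot 7 = 210$, where the computation in the proof of Lemma \ref{kcond} shows $\Delta(210) = 2$ directly; this works precisely when $210 \le N_0/2$, and Corollary \ref{5singfiberscor}(5) already disposes of all $q \ge 1811$. So the real content is the finitely many (in a suitable sense: a bounded, slowly growing family of) values $7 < q < 1811$ with $\gcd(q,210)=1$, for which $210 > N_0/2$ and a cruder estimate fails.

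For those remaining $q$ I would not look for a single jump of size $2$ but rather for two separated jumps of size $1$ joined by a plateau of zeros, which is exactly the $(1,0,\dots,0,1)$ pattern the section's strategy anticipates. Concretely, I would first pin down $(e_0, p_1', \dots, p_5')$ as a function of $q$: solving the Diophantine equation $e_0\cdot 210 q + \sum_i (p_1\cdots \widehat{p_i}\cdots p_5) p_i' = -1$ shows that $p_1', p_2', p_3', p_4'$ are determined by congruences mod $2,3,5,7$ independent of $q$, while $p_5'$ and $e_0$ depend on $q$ mod $210$; this splits the range $7 < q < 1811$ into finitely many residue classes, within each of which $\Delta(n)$ becomes an explicit piecewise-linear sawtooth in $n$. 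Within a class I would locate a small $n_0$ with $\Delta(n_0) = 1$ (a place where exactly one ceiling term increments as $n$ passes $n_0-1 \to n_0$ "ahead of schedule"), then a nearby $n_1 > n_0$ with $\Delta(n_1) = 1$ and $\Delta(n) = 0$ for $n_0 < n < n_1$ (a return to the diagonal), and verify $n_1 < N_0/2$; these can be read off from where the fractional parts $\{n p_i'/p_i\}$ reset. Because $N_0 = 210q\bigl(3 - \tfrac12 - \tfrac13 - \tfrac15 - \tfrac17 - \tfrac1q\bigr)$ grows linearly in $q$ while the needed block has bounded length, the inequality $n_1 < N_0/2$ holds for all but a genuinely finite list of small $q$, and that residual list I would simply check by the same computation the paper uses elsewhere ("can be checked").

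The main obstacle is the bookkeeping in the middle range: showing that in \emph{every} residue class of $q$ mod $210$ (intersected with $7 < q < 1811$, $\gcd(q,210)=1$) one can find the $(1,0,\dots,0,1)$ block strictly below $N_0/2$, rather than having the two unit jumps forced too far apart or too close to the symmetric point. I expect this to reduce to a finite but somewhat tedious case analysis on the residues $p_5' \bmod q$ and $e_0 \bmod 1$-type data, organized so that the plateau of zeros sits just after the first "early" ceiling increment; the estimate $210 \le N_0/2 \iff q \ge 1811$ from Corollary \ref{5singfiberscor} handles the tail, and for the head one exhibits explicit small blocks. If a uniform choice resists, the fallback is to enumerate the admissible $q$ with $N_0/2 < 210$ directly — there are only finitely many — and verify $U \cdot HF_{red}(\Sigma(2,3,5,7,q)) \neq 0$ for each by computing its $\Delta$-sequence, which is a finite computation of exactly the kind invoked after Corollary \ref{5singfiberscor}.
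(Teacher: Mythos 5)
There is a genuine gap, and it sits at the foundation of your reduction. The point where the computation in Lemma \ref{kcond} gives $\Delta = 2$ is $n = p_1p_2p_3p_4p_5 = 210q$ (the product of \emph{all five} multiplicities), not $210$; and for $Y = \Sigma(2,3,5,7,q)$ this candidate \emph{never} lies below $\nicefrac{N_0}{2}$, because $\tfrac12+\tfrac13+\tfrac15+\tfrac17 > 1$ forces $N_0 = 210q\bigl(3 - \tfrac12-\tfrac13-\tfrac15-\tfrac17-\tfrac1q\bigr) < 2\cdot 210q$ for every $q$. Relatedly, Corollary \ref{5singfiberscor}(5) concerns $\Sigma(2,3,7,43,q)$, not $\Sigma(2,3,5,7,q)$; no item of that corollary covers $p_3 = 5$, which is exactly why $\Sigma(2,3,5,p,q)$ is listed among the infinite families the corollary does not handle. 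Consequently your reduction to "finitely many $q$ with $7 < q < 1811$" is false: the lemma must treat \emph{all} admissible $q$, an infinite family, and your fallback of enumerating the $q$ with $\nicefrac{N_0}{2} < 210$ collapses for the same reason (the simple candidate fails for every $q$, not for finitely many).

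Your secondary sketch --- splitting by residue classes, making $\Delta$ explicit, and hunting for a $(1,0,\dots,0,1)$ block below $\nicefrac{N_0}{2}$ --- is indeed the right general idea and is what the paper does, but in your write-up it is the unsupported part: you give no candidate points and no argument that a suitable block exists in every class, while the scaffolding that was supposed to confine this work to a bounded range is the part that is wrong. The actual proof has to carry out that analysis for the full infinite family: one splits by $pq \bmod 30$ (with $p = 7$), by $e_0 \in \{-1,-2,-3\}$, and by the ranges of $\nicefrac{p'}{p}$ and $\nicefrac{q'}{q}$, takes explicit candidates such as $26pq$, $26pq\pm 1$, $27pq$, $15pq+30pq'$, $60pq'-9pq$, $280q-210q'$, and uses the fact that $p = 7$ forces $p' \in \{1,\dots,6\}$ together with the Diophantine equation to rule out impossible cells and to verify the $\nicefrac{N_0}{2}$ inequalities in the surviving ones (this is the content of Tables \ref{Compmod30}, \ref{1mod30}, \ref{29mod30}, \ref{13mod30p7}, \ref{17mod30p7}). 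Without producing those blocks, or some replacement for them, the proposal does not prove the lemma.
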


\begin{restatable}[]{lemma}{lemBpq}
Let $Y = \Sigma(2,3,7,p,q).$ Then $U \cdot HF_{red}(U) \neq 0.$
\label{237pqlem}
\end{restatable}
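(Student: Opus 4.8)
The plan is to prove Lemma \ref{237pqlem} along the same lines as the strategy outlined at the start of Section \ref{FiveSingularFibers}: exhibit, for $Y = \Sigma(2,3,7,p,q)$, an integer $x$ with $x$ (together with a run $x, x+1, \dots, x+j$) lying below $N_0/2$ and satisfying $(\Delta(x), \Delta(x+1), \dots, \Delta(x+j)) = (1, 0, \dots, 0, 1)$. By the symmetry $\Delta(m) = -\Delta(N_0 - m)$ this produces a matching run with values $(-1, 0, \dots, 0, -1)$ above $N_0/2$, and after the appropriate merges the $\Delta$-sequence contains $\{\dots, 2, \dots, -2\}$, so Lemma \ref{DeltaCond} (with $k = 1$) gives $U \cdot HF_{red}(Y) \neq 0$. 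By Corollary \ref{5singfiberscor}, parts (4) and (5), we may assume $7 < p$ with $(p,q) \neq (43, \cdot)$ already partially handled, so really the work is to cover all remaining $p, q$ with $p > 7$ (and $q > p$), $\gcd$ conditions ensuring a homology sphere; I would organize the argument by the residue of $p$ (and then $q$) modulo small primes, since the ceiling terms $\lceil np_i'/p_i \rceil$ in \eqref{Delta} are controlled by these residues.

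The first concrete step is to pin down $e_0$ and the $p_i'$ for $\Sigma(2,3,7,p,q)$, or at least the combination $|e_0| n - \sum \lceil n p_i'/p_i\rceil$ for the candidate values of $n$. Rather than solving the Diophantine equation in general, I would look for a small value of $n$ — something like $n$ a small multiple of $42 = 2\cdot3\cdot7$, or $n$ near $q$ or $pq/\text{something}$ — for which the three "small" fractions $n p_1'/p_1, n p_2'/p_2, n p_3'/p_3$ hit integers or near-integers, so that $\Delta(n)$ is forced to be small and positive, and then check that an adjacent $n' > n$ gives $\Delta(n') = 1$ again with $\Delta = 0$ in between. The key identity to lean on is the same substitution used in the proof of Lemma \ref{kcond}: at $n = p_1 p_2 \cdots p_5$ one gets $\Delta(n) = k+1$ exactly; here I want a different, smaller $n$ (since $N_0$ for these families may be smaller than $2 p_1 \cdots p_5$ only in the excluded cases) that still yields $\Delta = 1$ twice with a gap. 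A natural candidate: take $n$ to be a multiple of $\text{lcm}(2,3,7) = 42$ times a suitable factor, reducing the problem to understanding $\lceil n p_4'/p_4 \rceil$ and $\lceil n p_5'/p_5 \rceil$ alone.

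The main obstacle I anticipate is uniformity in $p$ and $q$: the quantities $p_4', p_5'$ (the "inverse-like" residues attached to $p$ and $q$) vary with $p, q$ in a way that is not a simple function, so a single closed-form choice of $x$ may not work for all residue classes. I expect to need a finite case split — likely on $p \bmod 42$ and $q \bmod (\text{something involving } p)$, or more cleverly on the value of $\Delta$ on a short window of integers near $N_0/2$ — reducing to finitely many sub-families, some of which may themselves need the Corollary \ref{5singfiberscor} bounds or a direct computation for the small leftover values of $p$. A secondary technical point is verifying $x + j < N_0/2$ in each case, which amounts to the inequality $2(x+j) < p_1 p_2 \cdots p_5 \big((l-2) - \sum 1/p_i\big)$; since $l = 5$ and $\sum 1/p_i = 1/2 + 1/3 + 1/7 + 1/p + 1/q < 1 + 1/7 + \varepsilon$, we have $(l-2) - \sum 1/p_i > 1$, so $N_0 > p\cdot q\cdot 42$ comfortably, and any $x$ of size $O(pq)$ or $O(42\,q)$ will satisfy the bound once $p, q$ are not too small — which is exactly the regime left after Corollary \ref{5singfiberscor}. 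I would finish by assembling the residue-class computations into a single statement covering all admissible $(p,q)$, and then note that the finitely many excluded tuples have already been dispatched by the Corollary or by direct check.
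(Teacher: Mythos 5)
Your high-level framing is the same as the paper's (two consecutive $+1$'s in the $\Delta$-sequence before $\nicefrac{N_0}{2}$, then symmetry, merges, and Lemma \ref{DeltaCond} with $k=1$), but what you have written is a plan rather than a proof, and the concrete direction you propose for executing it has a genuine gap. You suggest evaluating $\Delta$ at small $n$ (small multiples of $42$, or $n$ near $q$), so that the three ceiling terms coming from $2,3,7$ become exact and only $\left\lceil np'/p\right\rceil$ and $\left\lceil nq'/q\right\rceil$ remain. But those two terms are exactly the ones you cannot control uniformly: $p'$ and $q'$ are determined by the Diophantine equation and vary with $(p,q)$ in an essentially arbitrary way, so for $n=42m$ one gets $\Delta(42m)=1+m\bigl(\tfrac{42p'}{p}+\tfrac{42q'}{q}+\tfrac{1}{pq}\bigr)-\left\lceil\tfrac{42mp'}{p}\right\rceil-\left\lceil\tfrac{42mq'}{q}\right\rceil$, whose value depends on the fractional parts of $42mp'/p$ and $42mq'/q$; there is no finite residue split (your ``$q \bmod (\text{something involving }p)$'' is not finite as $p$ ranges) that pins these down, and you never exhibit a candidate $n$ for which two consecutive $+1$'s are actually forced. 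The paper resolves exactly this uniformity problem by making the opposite choice: it evaluates $\Delta$ at multiples of $pq$, namely $26pq$, $35pq$, $36pq$, $39pq$ and their shifts by $\pm1$, so that the two ``large'' ceiling terms become exact integers ($xp'q$ and $xpq'$) and cancel against the Diophantine relation, leaving values that depend only on $pq\bmod 42$ (which fixes $a,b$) and on $|e_0|\in\{1,2,3,4\}$; this is what produces the finite tables (Table \ref{237pqfunctionvalues} and Table \ref{Compmod42}).

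Two further points you gloss over also require real work in the paper. First, the bound $x+j<\nicefrac{N_0}{2}$ is not automatic for ``any $x$ of size $O(pq)$'': the candidates go up to $39pq+1$ while $\nicefrac{N_0}{2}\approx 40$--$42\,pq$ for small $p$, so one needs the explicit inequality $q\geq\frac{42p+4}{7p-42}$ (e.g.\ $q>25$ when $p=11$), with the finitely many failures checked by hand. Second, one residue class survives the multiples-of-$pq$ analysis ($pq\equiv 1\bmod 42$ with $e_0=-2$); there the paper has to introduce Type B values such as $63pq-42pq'-1,\ 63pq-42pq'$, split according to whether $\nicefrac{p'}{p}$ and $\nicefrac{q'}{q}$ lie above or below $\nicefrac{1}{2}$ (Table \ref{1mod42}), and verify $63pq-42pq'<\nicefrac{N_0}{2}$ under $p>42$, $q>\frac{42p}{p-42}$, again with a finite list of leftovers. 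Your proposal anticipates that a case analysis will be needed but neither identifies a workable family of candidate integers nor shows that any case split it suggests terminates, so as it stands it does not establish Lemma \ref{237pqlem}.
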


\begin{restatable}[]{lemma}{lemCpq}
Let $Y = \Sigma(2,3,11,13,q).$ Then $U \cdot HF_{red}(U) \neq 0.$
\label{other5lem}
\end{restatable}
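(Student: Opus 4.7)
My plan is to apply the strategy described at the start of Section \ref{FiveSingularFibers}: for each integer $q$ coprime to $2 \cdot 3 \cdot 11 \cdot 13$, I would exhibit indices $x < y$ with $y < N_0/2$ such that $\Delta(x) = \Delta(y) = +1$ and $\Delta$ vanishes on all integers strictly between them. The symmetry $\Delta(n) = -\Delta(N_0 - n)$ then supplies a mirror pair in the upper half of the $\Delta$-sequence where $-1$'s surround a block of zeros. Performing the merges available at the zero entries collapses each block, yielding a $\Delta$-sequence containing an entry $+2$ followed later by an entry $-2$. Lemma \ref{DeltaCond} with $k=1$ then delivers $U \cdot HF_{red}(Y) \neq 0$.

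To locate such pairs, I would first solve the Diophantine equation
$$858 q \, e_0 + 429 q \, p_1' + 286 q \, p_2' + 78 q \, p_3' + 66 q \, p_4' + 858 \, p_5' = -1$$
subject to $0 < p_i' < p_i$. Because $p_1' = 1$ automatically, while $p_2'$, $p_3'$, and $p_4'$ are determined by the residues of $q$ modulo $3$, $11$, and $13$ respectively, the argument splits into finitely many residue classes of $q$ modulo $\operatorname{lcm}(2,3,11,13) = 858$. In each class $(e_0, p_1', \ldots, p_4')$ becomes a fixed affine function of $q$, and $p_5'$ is an explicit linear expression in $q$. This is exactly the template underlying Lemmas \ref{235pqlem}--\ref{237pqlem}, and my proof of Lemma \ref{other5lem} would parallel those arguments.

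Within each residue class, I would search for $x$ among small integer combinations of the natural building blocks $q$ and $858 = p_1 p_2 p_3 p_4$, since these inputs cause most of the ceiling terms in $\Delta(n) = 1 + |e_0| n - \sum_i \lceil n p_i'/p_i \rceil$ to simplify algebraically via the Diophantine identity. Consequently the conditions $\Delta(x) = 1$, $\Delta(y) = 1$, and $\Delta(n) = 0$ for $x < n < y$ reduce to a finite list of explicit congruence checks on the fractional parts $\{n p_i' / p_i\}$. The bound $y < N_0/2 \sim 858 q$ is extremely loose relative to candidate sizes of order $O(q)$, so this inequality is essentially automatic once $q$ exceeds a small threshold, and the finitely many small $q$ can be dispatched by direct computation as in Corollary \ref{5singfiberscor}. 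The main obstacle I anticipate is combinatorial rather than conceptual: a somewhat large number of residue classes modulo $858$ must be handled, and the candidate pairs $(x, y)$ may need to be chosen differently in different classes; the proof will therefore be organized as a case analysis, guided by how the Diophantine identity collapses for each tuple $(p_1', p_2', p_3', p_4')$.
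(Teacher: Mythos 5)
Your route is the one the paper itself follows for Lemma \ref{other5lem}: produce two $+1$'s, separated only by zeros, in the $\Delta$-sequence before $N_0/2$, use the symmetry $\Delta(n)=-\Delta(N_0-n)$, merge, and apply Lemma \ref{DeltaCond} with $k=1$; your reduction of the Diophantine data is also correct (on each residue class of $q$ modulo $858$ the value $e_0$ is constant and $p_5'$ is affine in $q$, since $858p_5'=kq-1$ with $k\equiv q^{-1}\pmod{858}$ fixed). One small technical point: the zeros never appear in the $\Delta$-sequence at all, since its domain excludes them, so the merge is applied to the two adjacent $\pm 1$ entries rather than to ``blocks of zeros.'' The substantive issue is that the content of the lemma is precisely the exhibition of the pairs, and your proposal only promises a search over roughly $240$ residue classes without any argument that it succeeds in each one. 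The paper short-circuits that case explosion with a single uniform candidate: writing $p$ for the free multiplicity and $D=(e_0,1,a,b,c,p')$ for the Diophantine solution, it evaluates $\Delta$ at $781p$, where $781\equiv 0\pmod{11}$ and $781\equiv 1\pmod{3}$ and $\pmod{13}$, so the Diophantine identity gives $\Delta(781p)=1$ unconditionally, the dependence on $b$ disappears, and $\Delta(781p\pm 1)$ depends only on $(|e_0|,a,c)$ with $|e_0|\le 3$. Table \ref{13pcase} then settles all but two configurations with the adjacent pairs $(781p-1,781p)$ or $(781p,781p+1)$, and the leftover configurations (those with $c\neq 1,12$ and $|e_0|\in\{2,3\}$) require the less obvious candidates $1287p-858p'$ and $1573p-858p'$; these do lie in your proposed search space, since on each class $858p'=kq-1$, but nothing in your plan guarantees beforehand that candidates of this shape exist in every class. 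Finally, the inequality $y<N_0/2$ is not as free as you suggest: here $N_0/2\approx 857.6\,q$, so a candidate's leading coefficient must stay below roughly $857$, and $781q$ clears it by under ten percent; for the paper's choices the check does reduce to $q\ge 6$, but the coefficient has to be verified case by case, as the paper does.
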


With these lemmas, we are ready to prove Theorem \ref{5singfibersthm}

\singfibersthm*

\begin{proof}
This follows immediately from Corollary \ref{5singfiberscor} and Lemmas \ref{235pqlem}, \ref{237pqlem} and \ref{other5lem}. 
\end{proof}

\subsection{$\Sigma(2,3,5,p,q)$ Preliminaries}
Before we proceed with our proof of Lemmas \ref{235pqlem} and \ref{2357pqlem} we must first complete some algebraic preliminaries. Consider $Y = \Sigma(2,3,5,p,q),$ with Diophantine solutions $D = (e_0, 1, a, b, p', q')$. Then the Diophantine equation simplifies as follows 
$$-1 = 30pqe_0 + 15pq + 10apq + 6bpq + 30p'q + 30pq'. $$
Reducing this mod 30 we get the following,
\begin{align*}
29 &\equiv 15pq + 10apq + 6bpq  \mod 30 \\
29 &\equiv pq(15 + 10a +6b) \mod 30
\end{align*}

By the requirements of the Diophantine equation we know that $a \in \{1,2\}$ and $b \in \{1, 2, 3, 4\}$. Using this and reducing $pq \mod 30$ we can solve for $a$ and $b$. That gives us the following,

\begin{align*}
&pq \equiv 1 \mod 30 \implies D = (e_0,1,2,4,p',q') \quad \quad  &&pq \equiv 17 \mod 30 \implies D = (e_0,1,1,2,p',q') \\
&pq \equiv 7 \mod 30 \implies D = (e_0,1,2,2,p',q')\quad \quad &&pq \equiv 19 \mod 30 \implies D = (e_0,1,2,1,p',q') \\
&pq \equiv 11 \mod 30 \implies D = (e_0,1,1,4,p',q')\quad \quad &&pq \equiv 23 \mod 30 \implies D = (e_0,1,1,3,p',q') \\
&pq \equiv 13 \mod 30 \implies D = (e_0,1,2,3,p',q')\quad \quad &&pq \equiv 29 \mod 30 \implies D = (e_0,1,1,1,p',q') \\
\end{align*}

Note in all the above cases, $apq \equiv 2 \mod 3$ and $bpq \equiv 4 \mod 5$. \par

We can also come up with a bound on $e_0$ using the Diophantine equation:
\begin{align*}
-1 &= 30pqe_0 + 15pq + 10apq + 6bpq + 30p'q + 30pq' \\
-30pqe_0 &= 15pq + 10apq+6bpq+30p'q + 30pq' + 1 \\
|e_0| &= \frac{15pq + 10apq + 6bpq + 30p'q + 30pq' + 1}{30pq}\\
|e_0| &= \frac{1}{2} + \frac{a}{3} + \frac{b}{5} + \frac{p'}{p} + \frac{q'}{q} + \frac{1}{30pq} \\
|e_0| &< \frac{1}{2} + \frac{2}{3} + \frac{4}{5} + \frac{p}{p} + \frac{q}{q} + \frac{1}{30\cdot 7\cdot 11}= \frac{4582}{1155} \approx 3.96\\
\end{align*}
Therefore $e_0 = -1, -2, -3$. \par

As stated above, the goal is to find a consecutive sequence of values
$$(\Delta(x), \Delta(x+1), \dots, \Delta(x+j-1), \Delta(x+j) = (1, 0, \dots, 0, 1).$$
In the majority of cases for $\Sigma(2,3,5,p,q)$ this will be a pair of elements of the form $xpq, xpq \pm 1$. With this in mind, consider the following simplification of $\Delta(xpq + k)$, for $x \in \Z_+$ and $k \in \Z$.

\begin{align*}
    \Delta(xpq + k) &= 1 + |e_0|(xpq + k) - \left \lceil \frac{xpq+k}{2}\right \rceil- \left \lceil \frac{a(xpq+k)}{3}\right \rceil- \left \lceil \frac{b(xpq+k)}{5}\right \rceil \\
    &\quad \quad - \left \lceil \frac{p'(xpq+k)}{p}\right \rceil - \left \lceil \frac{q'(xpq+k)}{q}\right \rceil \\
    &= 1 + |e_0|xpq + k|e_0| - \left \lceil \frac{xpq+k}{2}\right \rceil - \left \lceil \frac{a(xpq+k)}{3}\right \rceil - \left \lceil \frac{b(xpq+k)}{5}\right \rceil \\
    &\quad \quad - xp'q - \left \lceil \frac{kp'}{p}\right \rceil - xpq' - \left \lceil \frac{kq'}{q}\right \rceil\\
    &= 1 + |e_0|xpq + k|e_0| - \left \lceil \frac{xpq+k}{2}\right \rceil - \left \lceil \frac{a(xpq+k)}{3}\right \rceil - \left \lceil \frac{b(xpq+k)}{5}\right \rceil \\
    &\quad \quad - \left \lceil \frac{kp'}{p}\right \rceil - \left \lceil \frac{kq'}{q}\right \rceil - x \left(-pqe_0 - \frac{pq}{2} - \frac{apq}{3} - \frac{bpq}{5} - \frac{1}{30} \right) \\
    &= 1 + k|e_0| + \frac{xpq}{2} - \left \lceil \frac{xpq+k}{2}\right \rceil + \frac{axpq}{3} - \left \lceil \frac{a(xpq+k)}{3}\right \rceil + \frac{bxpq}{5} - \left \lceil \frac{b(xpq+k)}{5}\right \rceil \\
    &\quad\quad- \left \lceil \frac{kp'}{p}\right \rceil- \left \lceil \frac{kq'}{q}\right \rceil + \frac{x}{30}.
\end{align*}

Using this simplification above we are able to compute $\Delta(xpq)$ and $\Delta(xpq \pm 1)$ for the $x$ values that satisfy the majority of our $\Sigma(2,3,5,p,q)$ cases. These values are presented in Table \ref{235pqfunctionvalues}. 
\begin{table}[h]
    \centering
    \begin{tabular}{|c||c|c|c|c|c|c|c|c|} 
        \hline 
         &$a = 1$ & $a = 1$ & $a = 1$ & $a = 1$ & $a = 2$ & $a = 2$ & $a = 2$ & $a = 2$ \\
         &$b = 1$ & $b = 2$ & $b = 3$ & $b = 4$ & $b = 1$ & $b = 2$ & $b = 3$ & $b = 4$ \\ \hline \hline 
         \rowcolor{lightgray!50}$\Delta(25pq - 1) =$  & $2 - |e_0|$ & $2 - |e_0|$ & $2 - |e_0|$ & $2 - |e_0|$&$3- |e_0|$ &$3- |e_0|$ &$3- |e_0|$ &$3- |e_0|$  \\ \hline
          \rowcolor{lightgray!50}$\Delta(25pq) =$  & 1 & 1 & 1 & 1 & 1 & 1 & 1 & 1  \\ \hline 
           \rowcolor{lightgray!50}$\Delta(25pq + 1) =$  & $|e_0| - 2$ & $|e_0| - 2$ & $|e_0| - 2$ & $|e_0| - 2$ & $|e_0| - 3$ &$ |e_0| - 3$ &$|e_0| - 3$ &$|e_0| - 3$  \\ \hline \hline 
           $\Delta(26pq - 1) =$  & $2 - |e_0|$ & $2 - |e_0|$ & $2 - |e_0|$ & $3 - |e_0|$&$2- |e_0|$ &$2- |e_0|$ &$2- |e_0|$ &$3- |e_0|$  \\ \hline
          $\Delta(26pq) =$  & 1 & 1 & 1 & 1 & 1 & 1 & 1 & 1  \\ \hline 
           $\Delta(26pq + 1) =$  & $|e_0| - 2$ & $|e_0| - 3$ & $|e_0| - 3$ & $|e_0| - 3$ & $|e_0| - 2$ &$ |e_0| - 3$ &$|e_0| - 3$ &$|e_0| - 3$  \\ \hline \hline 
           \rowcolor{lightgray!50}$\Delta(27pq - 1) =$  & $2 - |e_0|$ & $2 - |e_0|$ & $3 - |e_0|$ & $3 - |e_0|$&$2- |e_0|$ &$2- |e_0|$ &$3- |e_0|$ &$3- |e_0|$  \\ \hline
         \rowcolor{lightgray!50} $\Delta(27pq) =$  & 1 & 1 & 1 & 1 & 1 & 1 & 1 & 1  \\ \hline 
          \rowcolor{lightgray!50} $\Delta(27pq + 1) =$  & $|e_0| - 2$ & $|e_0| - 2$ & $|e_0| - 3$ & $|e_0| - 3$ & $|e_0| - 2$ &$ |e_0| - 2$ &$|e_0| - 3$ &$|e_0| - 3$  \\ \hline 
    \end{tabular}
    \caption{Key values of $\Delta(xpq)$ and $\Delta(xpq \pm 1)$}
    \label{235pqfunctionvalues}
\end{table}

%%%%%%%%%%%%%%%%%%%%%%%%%%%%%%%%%%%%%%%%%%%%%%%%%%%%%%%%%%%%%%%%%%%%%%%%%%%%%%%%%%%%%%%%%%%%%%%%%%%%%%%%%%%%%%%%%%%%%%%%%%%%%%%%%%%%%%%%%%%%%%%%%%%%%%%%%%%%%%%%%%%%%

\subsection{$\Sigma(2,3,5,p,q)$ Results}

Now we are ready to prove the Lemma \ref{235pqlem}.

\lemAapq*

As stated above, we wish to find values that provide us with 2 consecutive $+1's$ in our $\Delta$-sequence that occur prior to $\nicefrac{N_0}{2}$. Then by Lemma \ref{DeltaCond} our proof is complete. 
The values that satisfy this condition for sufficiently large $p$ and $q$ are found in Tables \ref{Compmod30}, \ref{1mod30}, \ref{13mod30}, \ref{17mod30} and $\ref{29mod30}.$ We separate the values these tables into two cases. Type A are those where the values are of the form $xpq \pm k$. Type B cases are those that follow a different pattern and are color coded green. \par

\begin{table}[h]
    \centering
    \begin{tabular}{|c||c|c|c|}
\hline
&$e_0 = -1$ & $e_1 = -2$ & $e_0 = -3$ \\ \hline \hline 
\multirow{2}{10em}{$pq \equiv 1 \mod 30$} & $26pq - 1$ & $26pq - 1$ & See Table \ref{1mod30}  \\
& $26pq$ & $26pq$ &   \\\hline 
\multirow{2}{10em}{$pq \equiv 7 \mod 30$} & $26pq - 1$ & $25pq - 1$ & $27pq$ \\
& $26pq$ & $25pq$ & $27pq+1$ \\\hline 
\multirow{2}{10em}{$pq \equiv 11 \mod 30$} & $25pq - 1$ & $26pq - 1$ & $25pq$ \\
 & $25pq$ & $26pq$ & $25pq+1$ \\\hline 
\multirow{2}{10em}{$pq \equiv 13 \mod 30$} & $26pq - 1$ & $25pq - 1$ & See Table \ref{13mod30}  \\
& $26pq$ & $25pq$ &   \\\hline 
\multirow{2}{10em}{$pq \equiv 17 \mod 30$} & $25pq - 1$ & See Table \ref{17mod30} & $25pq$ \\
& $25pq$ &  & $25pq+1$ \\\hline 
\multirow{2}{10em}{$pq \equiv 19 \mod 30$} & $26pq - 1$ & $25pq - 1$ & $ 26pq+1$ \\
& $26pq$ & $25pq$ & $26pq+1$ \\\hline 
\multirow{2}{10em}{$pq \equiv 23 \mod 30$} & $25pq - 1$ & $27pq - 1$ & $25pq$ \\
& $25pq$ & $27pq$ & $25pq+1$ \\\hline 
\multirow{2}{10em}{$pq \equiv 29 \mod 30$} & $25pq - 1$ & See Table \ref{29mod30} & $25pq$ \\
& $25pq$ &  & $25pq+1$ \\\hline 
\end{tabular}
    \caption{General Cases for $Y = \Sigma(2,3,5,p,q)$, with $p$ and $q$ sufficiently large}
    \label{Compmod30}
\end{table}

\begin{table}[h]
    \centering
    \begin{tabular}{|c||c|c|c|c|}
    \hline
         & $0<\nicefrac{p'}{p}<\nicefrac{1}{3}$ &$\nicefrac{1}{3}<\nicefrac{p'}{p}<\nicefrac{1}{2}$ &$\nicefrac{1}{2}<\nicefrac{p'}{p}<\nicefrac{2}{3}$
         &$\nicefrac{2}{3}<\nicefrac{p'}{p}<1$
         \\ \hline \hline
         \multirow{3}{7em}{$0<\nicefrac{q'}{q}<\nicefrac{1}{3}$} & $20pq$ & \cellcolor{green!25}$15pq + 30pq'$ & \cellcolor{green!25}$15pq + 30pq'$ & $20pq - 3$\\ 
         & $20pq+1$ & \cellcolor{green!25}$15pq + 30pq'+1$ & \cellcolor{green!25}$15pq + 30pq'+1$ & {\tiny$\vdots$} \\
         & $20pq+2$ & \cellcolor{green!25} & \cellcolor{green!25} & $22pq$\\\hline
         \multirow{3}{7em}{$\nicefrac{1}{3}<\nicefrac{q'}{q}<\nicefrac{1}{2}$} & \cellcolor{green!25} $15pq + 30p'q$ & $20pq$ & $22pq - 3$ & $22pq - 3$\\
         & \cellcolor{green!25} $15pq + 30p'q + 1$ & $20pq+1$ & {\tiny$\vdots$} & {\tiny$\vdots$}\\
         & \cellcolor{green!25} & $20pq + 2$ & $22pq$ & $22pq$\\\hline
         \multirow{3}{7em}{$\nicefrac{1}{2}<\nicefrac{q'}{q}<\nicefrac{2}{3}$} & \cellcolor{green!25} $15pq + 30p'q$ & $22pq - 3$ & $22pq - 2$ & $22pq - 3$\\
         & \cellcolor{green!25} $15pq + 30p'q + 1$ & {\tiny$\vdots$} & $22pq - 1$ & {\tiny$\vdots$}\\
         & \cellcolor{green!25} & $20pq + 2$ & $22pq$ & $22pq$\\\hline
         \multirow{3}{7em}{$\nicefrac{2}{3}<\nicefrac{q'}{q}<1$} & $22pq - 3$ & $22pq - 3$ & $22pq - 3$ & $22pq - 2$\\
         & {\tiny$\vdots$} & {\tiny$\vdots$} & {\tiny$\vdots$} & {\tiny$\vdots$}\\
         & $22pq$ & $22pq$ & $22pq$ & $22pq$\\\hline
    \end{tabular}
    \caption{$pq \equiv 1 \mod 30$, $e_0 = -3$, , for $p$ and $q$ sufficiently large}
    \label{1mod30}
\end{table}

\begin{table}[h]
    \centering
    \begin{tabular}{|c||c|c|c|c|c|}
    \hline
         & $0<\nicefrac{p'}{p}\leq \nicefrac{1}{5}$ & $\nicefrac{1}{5}<\nicefrac{p'}{p}\leq \nicefrac{2}{5}$ & $\nicefrac{2}{5}<\nicefrac{p'}{p}\leq \nicefrac{3}{5}$ & $\nicefrac{3}{5}<\nicefrac{p'}{p}\leq \nicefrac{4}{5}$ & $\nicefrac{4}{5}<\nicefrac{p'}{p}\leq 1$  \\ \hline\hline
        \multirow{3}{7em}{$0<\nicefrac{q'}{q}\leq \nicefrac{1}{5}$} & $26pq$ & $26pq$ & $26pq$ & $26pq$ & $26pq$\\
        & $26pq + 1$ & $26pq+1$ & $26pq + 1$ & $26pq + 1$ & $26pq + 1$\\
        & $26pq + 2$ & $26pq + 2$ & $26pq + 2$ & $26pq + 2$ & $26pq + 2$\\ \hline
        
         \multirow{3}{7em}{$\nicefrac{1}{5}<\nicefrac{q'}{q}\leq \nicefrac{2}{5}$} & $26pq$ & $26pq$ & $26pq$ & $26pq$ & $26pq$ \\
         & $26pq + 1$ & $26pq + 1$ & $26pq + 1$ & $26pq + 1$ & $26pq + 1$ \\
         & $26pq + 2$ & $26pq + 2$ & $26pq + 2$ & $26pq + 2$ & $26pq + 2$ \\\hline
         \multirow{3}{7em}{$\nicefrac{2}{5}<\nicefrac{q'}{q}\leq \nicefrac{3}{5}$} & $26pq$ & $26pq$ & $28pq$ & $28pq$ & $28pq - 4$\\
         & $26pq + 1$ & $26pq + 1$ & {\tiny$\vdots$} & {\tiny$\vdots$} & {\tiny$\vdots$}\\
         & $26pq + 2$ & $26pq + 2$ & $28pq + 6$ & $28pq + 6$ & $ 28pq$\\\hline
         \multirow{3}{7em}{$\nicefrac{3}{5}<\nicefrac{q'}{q}\leq \nicefrac{4}{5}$} & $26pq$ & $26pq$ & $28pq$ & \cellcolor{green!25}$90pq'-3pq$ & $28pq - 4$ \\
         & $26pq+1$ & $26pq+1$ & {\tiny$\vdots$} & \cellcolor{green!25}{\tiny$\vdots$} & {\tiny$\vdots$} \\
         & $26pq + 2$ & $26pq + 2$ & $28pq+6$ &\cellcolor{green!25} $90pq' - 3pq + 3$ & $28pq$ \\\hline
         \multirow{3}{7em}{$\nicefrac{4}{5}<\nicefrac{q'}{q}\leq 1$} & $26pq$ & $26pq$ & $28pq - 4$ & $28pq - 4$ & $28pq - 4$ \\
         & $26pq+1$ & $26pq+1$ & {\tiny$\vdots$} & {\tiny$\vdots$} & {\tiny$\vdots$} \\
         & $26pq + 2$ & $26pq + 2$ & $28pq$ & $28pq$ & $28pq$ \\
         
         \hline
    \end{tabular}
    \caption{$pq \equiv 13 \mod 30$, $e_0 = -3$, for $p$ and $q$ sufficiently large}
    \label{13mod30}
\end{table}

\begin{table}[h]
    \centering
    \begin{tabular}{|c||c|c|c|c|c|}
    \hline
         & $0<\nicefrac{p'}{p}\leq \nicefrac{1}{5}$ & $\nicefrac{1}{5}<\nicefrac{p'}{p}\leq \nicefrac{2}{5}$ & $\nicefrac{2}{5}<\nicefrac{p'}{p}\leq \nicefrac{3}{5}$ & $\nicefrac{3}{5}<\nicefrac{p'}{p}\leq \nicefrac{4}{5}$ & $\nicefrac{4}{5}<\nicefrac{p'}{p}\leq 1$  \\ \hline\hline
         \multirow{3}{7em}{$0<\nicefrac{q'}{q}\leq \nicefrac{1}{5}$} & $28pq$ & $28pq$ & $28pq$ & $26pq -2$ & $26pq-2$\\
         & {\tiny$\vdots$} & {\tiny$\vdots$} & {\tiny$\vdots$} & $26pq-1$ & $26pq-1$\\
         & $28pq + 3$ & $28pq + 4$ & $28pq + 4$ & $26pq$ & $ 26pq$\\\hline
         \multirow{3}{7em}{$\nicefrac{1}{5}<\nicefrac{q'}{q}\leq \nicefrac{2}{5}$} & $28pq$ & \cellcolor{green!25}$51pq - 60pq'-3$ & $28pq - 6$ & $26pq - 2$ & $26pq - 2$ \\
         & {\tiny$\vdots$} & \cellcolor{green!25}{\tiny$\vdots$} & {\tiny$\vdots$} & $26pq - 1$ & $26pq - 1$ \\
         & $28pq + 4$ & \cellcolor{green!25}$51pq - 60pq'$ & $28pq$ & $26pq$ & $26pq$ \\\hline
         \multirow{3}{7em}{$\nicefrac{2}{5}<\nicefrac{q'}{q}\leq \nicefrac{3}{5}$} & $28pq$ & $28pq - 6$ & $28pq - 6$ & $26pq - 2$ & $26pq - 2$\\
         & {\tiny$\vdots$} & {\tiny$\vdots$} & {\tiny$\vdots$} & $26pq - 1$ & $26pq - 1$\\
         & $28pq$ & $28pq$ & $28pq$ & $26pq$ & $26pq$\\\hline
         \multirow{3}{7em}{$\nicefrac{3}{5}<\nicefrac{q'}{q}\leq \nicefrac{4}{5}$} & $26pq - 2$ & $26pq -2$ & $26pq -2$ & $26pq -2$ & $26pq -2$ \\
         & $26pq - 1$ & $26pq -1$ & $26pq -1$ & $26pq -1$ & $26pq -1$ \\
         & $26pq$ & $26pq$ & $26pq$ & $26pq$ & $26pq$ \\\hline
         \multirow{3}{7em}{$\nicefrac{5}{5}<\nicefrac{q'}{q}\leq 1$} & $26pq - 2$ & $26pq -2$ & $26pq -2$ & $26pq -2$ & $26pq -2$ \\
         & $26pq - 1$ & $26pq -1$ & $26pq -1$ & $26pq -1$ & $26pq -1$ \\
         & $26pq$ & $26pq$ & $26pq$ & $26pq$ & $26pq$ \\\hline
    \end{tabular}
    \caption{$pq \equiv 17 \mod 30$, $e_0 = -2$, for $p$ and $q$ sufficiently large}
    \label{17mod30}
\end{table}

\begin{table}[h]
    \centering
    \begin{tabular}{|c||c|c|c|}
    \hline
         & $0<\nicefrac{p'}{p}<\nicefrac{1}{2}$ &$\nicefrac{1}{2}<\nicefrac{p'}{p}<\nicefrac{3}{5}$ &$\nicefrac{3}{5}<\nicefrac{p'}{p}<1$  \\ \hline \hline
         \multirow{3}{7em}{$0<\nicefrac{q'}{q}<\nicefrac{1}{2}$} & $26pq$ & \cellcolor{green!25}$60p'q - 8pq$ & \cellcolor{green!25}$45pq - 30p'q - 1$\\
         & $26pq+1$ & \cellcolor{green!25}$60p'q - 8pq + 1$& \cellcolor{green!25}$45pq - 30p'q$\\
         & $26pq+2$ &\cellcolor{green!25} $60q' - 8q + 2$ &\cellcolor{green!25} \\\hline
         \multirow{3}{7em}{$\nicefrac{1}{2}<\nicefrac{q'}{q}<\nicefrac{3}{5}$} & \cellcolor{green!25}$60pq' - 8pq$ & $26pq - 2$ &$26pq - 2$ \\
         & \cellcolor{green!25}$60pq' - 8pq + 1$ & $26pq - 1$ &$26pq - 1$ \\
         & \cellcolor{green!25}$60pq' - 8pq+2$ & $26pq$ &$26pq $ \\\hline
         \multirow{3}{7em}{$\nicefrac{3}{5}<\nicefrac{q'}{q}<1$} & \cellcolor{green!25}$45pq - 30pq' - 1$ & $26pq - 2$ & $26pq - 2$\\
        & \cellcolor{green!25}$45pq - 30pq'$ & $26pq - 1$ & $26pq - 1$ \\
        & \cellcolor{green!25} & $26pq$ & $26pq$ \\\hline
    \end{tabular}
    \caption{$pq \equiv 29 \mod 30$, $e_0 = -2$, for $p$ and $q$ sufficiently large}
    \label{29mod30}
\end{table}

\begin{proof}
Given the values in Tables \ref{Compmod30}, \ref{1mod30}, \ref{13mod30}, \ref{17mod30} and $\ref{29mod30},$ our proof involves checking the value of $\Delta$ of these numbers, and confirming that they occur before $\nicefrac{N_0}{2}.$ For Type A cases, the $\Delta$ function of these numbers can either be found in Table \ref{235pqfunctionvalues} or computed using the formula for $\Delta(xpq + k)$. Type B cases we will address later. \par

For the Type A cases it remains to show that these values occur before $\nicefrac{N_0}{2}$. Since it is clear that $$ 25pq \leq 26pq \leq 27pq \leq 28pq \leq 28pq + 6$$
we need only to check that $28pq + 6 \leq \nicefrac{N_0}{2}.$ It can be easily checked that this holds when $$p>10, \text{ and } q \geq \frac{10p + 4}{p - 10},$$ which are satisfied in the following cases,
\begin{itemize}
    \item $p = 11,\quad q>114$
    \item $p = 13, \quad q > 43$
    \item $p = 17,\quad q>24$
    \item $p = 19,\quad q>21$
    \item $p > 20$
\end{itemize}
The finite cases that do not satisfy these inequalities may be easily checked to satisfy $$U \cdot HF_{red} \neq 0.$$ 

Thus it remains to address the following. 
\begin{enumerate}
    \item $pq \equiv 1 \mod 30$, $e_0 = -3$, $\nicefrac{1}{3} < \nicefrac{p'}{p} < \nicefrac{2}{3}$ and $0 < \nicefrac{q'}{q} < \nicefrac{1}{3}$, and vice versa. 
    \item $pq \equiv 13 \mod 30$, $e_0 = -3$, $\nicefrac{3}{5} < \nicefrac{p'}{p} < \nicefrac{4}{5}$ and $\nicefrac{3}{5} < \nicefrac{q'}{q} < \nicefrac{4}{5}$
    \item $pq \equiv 17\mod 30$, $e_0 = -2$, $\nicefrac{1}{5} < \nicefrac{p'}{p}\leq \nicefrac{2}{5}$ and $\nicefrac{1}{5} < \nicefrac{q'}{q}\leq \nicefrac{2}{5}$
    \item $pq \equiv 29 \mod 30$, $e_0 = -2$, $\nicefrac{1}{2} < \nicefrac{q'}{q} < \nicefrac{3}{5}$ and $0 < \nicefrac{p'}{p} < \nicefrac{1}{2}$ and vice versa
    \item $pq \equiv 29 \mod 30$, $e_0 = -2$, $\nicefrac{3}{5} < \nicefrac{p'}{p} < 1$ and $0 < \nicefrac{q'}{q} < \nicefrac{1}{2}$, and vice versa
\end{enumerate} 

\paragraph*{Case 1:} Assume that $\nicefrac{1}{3} < \nicefrac{p'}{p} < \nicefrac{2}{3}$ and $0 < \nicefrac{q'}{q} < \nicefrac{1}{3}$. First we check that $\Delta(15pq + 30pq') = 1$ and $\Delta(15pq + 30pq' + 1) = 1$. 

\begin{align*}
    \Delta(15pq + 30pq') &= 1 + |e_0|(15pq + 30pq') - \left \lceil \frac{15pq + 30pq'}{2} \right \rceil - \left \lceil \frac{2(15pq + 30pq')}{3} \right \rceil\\
    &\quad\quad - \left \lceil \frac{4(15pq + 30pq')}{5}\right \rceil - \left \lceil \frac{p'(15pq + 30pq'}{p} \right \rceil - \left \lceil \frac{q'(15pq + 30pq'}{q}\right \rceil\\
    &= 1 + 15pq|e_0| - 30pq'|e_0| - \frac{15pq + 30pq' + 1}{2} - \frac{30pq + 60pq'}{3} \\
    &\quad\quad - \frac{60pq + 120pq'}{5} - 15p'q - 30p'q' - 15pq' - \left \lceil \frac{30q'}{q}(pq')\right \rceil \\
    &= 1 + 15pq|e_0| + 30pq'|e_0| - \frac{15pq + 30pq' + 1}{2} - \frac{30pq + 60pq'}{3}\\
    &\quad\quad - \frac{60pq - 120pq'}{5} - 30p'q' - 15\left( -pqe_0 - \frac{pq}{2} - \frac{2pq}{3} - \frac{4pq}{5} - \frac{1}{30} \right) \\
    &\quad\quad - \left \lceil \frac{30q'}{q} \left( -pqe_0 - \frac{pq}{2} - \frac{2pq}{3} - \frac{4pq}{5} - \frac{1}{30} - p'q \right) \right \rceil \\
    &= 1
\end{align*}

It can be similarly checked that $\Delta(15pq + 30pq' + 1) = 1$. Now we must check that $$15pq + 30pq' + 1 < \nicefrac{N_0}{2}.$$ It is easily checked that this is satisfied when $$q' < \frac{29pq - 30q - 3p - 2}{60p}.$$
Since $q' < \nicefrac{q}{3}$ by assumption, we check when
$$\frac{q}{3} < \frac{29pq - 30q - 3p - 2}{60p},$$
which is satisfied when $q > 6.$ Thus $15pq + 30pq' + 1 < \nicefrac{N_0}{2}$ is satisfied for $q>6$, which is trivially true. The case where $\nicefrac{1}{3} < \nicefrac{q'}{q} < \nicefrac{2}{3}$ and $0 < \nicefrac{p'}{p} < \nicefrac{1}{3}$ can be checked the same way, and so this case is complete. \par 

The remaining cases can be checked using the same process.
   \paragraph*{Case 2:} Let $pq \equiv 13 \mod 30$, $e_0 = -3$, $\nicefrac{3}{5} < \nicefrac{p'}{p},\nicefrac{q'}{q} <\nicefrac{4}{5}$. It is easily checked that $$\Delta(60pq' - 9pq) = 1 = \Delta(60pq' - 9pq + 3).$$ $$\Delta(60pq' - 9pq + 1) = 0 = \Delta(60pq' - 9pq + 2).$$
    Note that if $\left \lceil \frac{3p'}{p}\right\rceil = 3$, then $q>\frac{3}{5}$, by the Diophantine equation. Since we assumed $\nicefrac{3}{5} < \nicefrac{q'}{q}<\nicefrac{4}{5}$, we must have $\left \lceil \frac{3p'}{p}\right\rceil = 2$. Also using the Diophantine equation we have that $\nicefrac{p'}{p} > \nicefrac{3}{5}$, implies $\nicefrac{q'}{q}< \nicefrac{19}{30}$.  We check that $$60pq' - 9pq + 3 < \nicefrac{N_0}{2},$$ when
    $$q'<\frac{77pq - 30p - 30q - 6}{120p}.$$
    Since $q' < \frac{19q}{30}$, we check when 
    $$\frac{19q}{30} < \frac{77pq - 30p - 30q - 6}{120p},$$
    which is satisfied when $$p>30, \text{ and } q > \frac{30p + 6}{p - 30}.$$ The above inequalities are satisfied in the following cases,
    \begin{multicols}{2}
    \begin{itemize}
        \item $p> 60$
        \item $p = 31, \quad q>936$
        \item $p = 37, \quad q>159$
        \item $p = 41, \quad q>112$
        \item $p = 43, \quad q>99$
        \item $p = 47, \quad q>83$
        \item $p = 53, \quad q>69$
        \item $p = 59, \quad q>61$
    \end{itemize}
    \end{multicols}
    We still must address this for $p< 30$. We check each of these cases individually, using our restrictions on $\nicefrac{p'}{p}$, $pq \mod 30$ and the Diophantine equation. In each case we are trying to show that 
    \begin{equation}
    q' < \frac{77pq - 30q - 30p - 3}{130p}.
    \label{Necineq}
    \end{equation}
    \begin{itemize}
        \item $p = 11$: Then $p' \in \{7,8\}$. By the Diophantine equation we have $q' \leq \frac{197q - 1}{330}$, and this satisfies \eqref{Necineq} when $q > \frac{329}{11}$. 
        \item $p = 13$: Then $p' \in \{8, 9, 10\}$. By the Diophantine equation we have $q' \leq \frac{241q - 1}{390}$, and this satisfies \eqref{Necineq} when $q > 11$. 
        \item $p = 17$: Then $p' \in \{11, 12, 13\}$. By the Diophantine equation we have $q' \leq \frac{299q - 1}{510}$, and this satisfies \eqref{Necineq} when $q > 55$. 
        \item $p = 19$: Then $p'\in \{12, 13, 14, 15\}$. By the Diophantine equation we have $q' \leq \frac{343q - 1}{570}$, and this satisfies \eqref{Necineq} when $q > 9$. 
        \item $p = 23$: Then $p' \in \{14, 15, 16, 17, 18\}$. By the Diophantine equation we have $q' \leq \frac{431q - 1}{690}$, and this satisfies \eqref{Necineq} when $q > 40$. 
        \item $p = 29$: Then $p' \in \{18, 19, 20, 21, 22, 23\}$. By the Diophantine equation we have $q' \leq \frac{533q - 1}{870}$, and this satisfies \eqref{Necineq} when $q > 12$. 
    \end{itemize}
    The remaining finite cases can be checked by hand, and thus this case is complete.

  \paragraph*{Case 3:} $pq \equiv 17 \mod 30$, $e_0 = -2$, $\nicefrac{1}{5} < \nicefrac{p'}{p},\nicefrac{q'}{q} <\nicefrac{2}{5}$. \\ It is easily checked that $$\Delta(51pq - 60pq' - 3) = 1 = \Delta(51pq - 60pq').$$ $$\Delta(51pq - 60pq' - 2) = 0 = \Delta(51pq - 60pq' - 1).$$
    Using our assumptions on $p'$ and $q'$ and the Diophantine equation we have that $$51pq - 60pq' < \nicefrac{N_0}{2},$$ when
    $p>30, \text{ and } q> \frac{30p}{p-30}.$
    The above inequalities are satisfied in the following cases,
    \begin{multicols}{2}
    \begin{itemize}
        \item $p > 60$
        \item $p = 31, \quad q > 930$
        \item $p = 37, \quad q > 158$
        \item $p = 41, \quad q > 111$
        \item $p = 43, \quad q > 99$
        \item $p = 47, \quad q > 82$
        \item $p = 53, \quad q > 69$
        \item $p = 59, \quad q > 61$
    \end{itemize}
    \end{multicols}
    We still must address this for $p< 30$. We check each of these cases individually, using our restrictions on $\nicefrac{p'}{p}$, $pq \mod 30$ and the Diophantine equation. In each case we are trying to show that 
    \begin{equation}
    q' > \frac{43pq + 30p + 30q}{120p}.
    \label{Necineq2}
    \end{equation}
    \begin{itemize}
        \item $p = 11$: Then $p' \in \{3,4\}$. By the Diophantine equation we have $q' \geq \frac{133q - 1}{210}$, and this satisfies \eqref{Necineq2} when $q > 1$. 
        \item $p = 13$: Then $p' \in \{3, 4, 5\}$. By the Diophantine equation we have $q' \geq \frac{149q - 1}{390}$, and this satisfies \eqref{Necineq} when $q > 1$. 
        \item $p = 17$: Then $p' \in \{4, 5, 6\}$. By the Diophantine equation we have $q' \geq \frac{211q - 1}{510}$, and this satisfies \eqref{Necineq2} when $q > 6$. 
        \item $p = 19$: Then $p'\in \{12, 13, 14, 15\}$. By the Diophantine equation we have $q' \geq \frac{227q - 1}{570}$, and this satisfies \eqref{Necineq2} when $q > 9$. 
        \item $p = 23$: Then $p' \in \{14, 15, 16, 17, 18\}$. By the Diophantine equation we have $q' \geq \frac{259q - 1}{690}$, and this satisfies \eqref{Necineq2} when $q > 40$. 
        \item $p = 29$: Then $p' \in \{18, 19, 20, 21, 22, 23\}$. By the Diophantine equation we have $q' \geq \frac{337q - 1}{870}$, and this satisfies \eqref{Necineq2} when $q > 12$. 
    \end{itemize}
    The remaining finite cases can be checked by hand, and thus this case is complete.
    
  \paragraph*{Case 4:} $pq \equiv 29 \mod 30$, $e_0 = -2$, $\nicefrac{1}{2} < \nicefrac{q'}{q}<\nicefrac{3}{5}$ and $0 < \nicefrac{p'}{p} < \nicefrac{1}{2}$.\\ It is easily checked that $$\Delta(60pq' - 8pq) = 1 = \Delta(60pq' - 8pq + 2),$$
    $$\Delta(60pq' - 8pq + 1) = 0.$$
    Using our assumptions of $p'$ and $q'$ and the Diophantine equation we have that  $$60pq' -8pq + 2 < \nicefrac{N_0}{2},$$ when
    $p>10$ and $q > \frac{30p + 4}{3p - 30}$. These are satisfied in the following cases,
    \begin{itemize}
        \item $p>20$
        \item $p = 11, \quad q>111$
        \item $p = 13, \quad q>43$
        \item $p = 17, \quad q>25$
        \item $p = 19, \quad q>21$
    \end{itemize}
    The case where $\nicefrac{1}{2} < \nicefrac{p'}{p}<\nicefrac{3}{5}$ and $0 < \nicefrac{q'}{q} < \nicefrac{1}{2}$ can be checked similarly, completing this case.

    \paragraph*{Case 5:} $pq \equiv 29 \mod 30$, $e_0 = -2,$ $0 < \nicefrac{q'}{q}< \nicefrac{1}{2},$ $\nicefrac{3}{5} < \nicefrac{p'}{p}<1$. \\
    It is easily checked that $$\Delta(45pq - 30p'q - 1) = 1 = \Delta(45pq - 30p'q).$$
    Using our assumptions on $p'$ and $q'$ and the Diophantine equation we have that $$45pq - 30p'q < \nicefrac{N_0}{2},$$
    when $p>6$ and $q > \frac{6p}{p-6}$. These inequalities are satisfied in the following cases,
    \begin{itemize}
        \item $p>12$
        \item $p = 11, \quad q> 13$
    \end{itemize}
The case where $0 < \nicefrac{p'}{p}< \nicefrac{1}{2},$ $\nicefrac{3}{5} < \nicefrac{q'}{q}<1$, can be checked similarly. Thus our proof is complete.
\end{proof}

Now we prove the other case of $\Sigma(2,3,5,p,q).$

\lemAbpq*

\begin{table}[h]
    \centering
    \begin{tabular}{|c||c|c|c|c|c|}
    \hline
         & $0<\nicefrac{p'}{p}\leq \nicefrac{1}{5}$ & $\nicefrac{1}{5}<\nicefrac{p'}{p}\leq \nicefrac{2}{5}$ & $\nicefrac{2}{5}<\nicefrac{p'}{p}\leq \nicefrac{3}{5}$ & $\nicefrac{3}{5}<\nicefrac{p'}{p}\leq \nicefrac{4}{5}$ & $\nicefrac{4}{5}<\nicefrac{p'}{p}\leq 1$  \\ \hline\hline
        \multirow{3}{7em}{$0<\nicefrac{q'}{q}\leq \nicefrac{1}{5}$} & $26pq$ & $26pq$ & $26pq$ & $26pq$ & $26pq$\\
        & $26pq + 1$ & $26pq+1$ & $26pq + 1$ & $26pq + 1$ & $26pq + 1$\\
        & $26pq + 2$ & $26pq + 2$ & $26pq + 2$ & $26pq + 2$ & $26pq + 2$\\ \hline
        
         \multirow{3}{7em}{$\nicefrac{1}{5}<\nicefrac{q'}{q}\leq \nicefrac{2}{5}$} & $26pq$ & $26pq$ & $26pq$ & $26pq$ & $26pq$ \\
         & $26pq + 1$ & $26pq + 1$ & $26pq + 1$ & $26pq + 1$ & $26pq + 1$ \\
         & $26pq + 2$ & $26pq + 2$ & $26pq + 2$ & $26pq + 2$ & $26pq + 2$ \\\hline
         \multirow{3}{7em}{$\nicefrac{2}{5}<\nicefrac{q'}{q}\leq \nicefrac{3}{5}$} & $26pq$ & $26pq$ & \cellcolor{red!25} & \cellcolor{green!25}$280q - 210q' - 7$ &\cellcolor{red!25} \\
         & $26pq + 1$ & $26pq + 1$ & \cellcolor{red!25} & \cellcolor{green!25}{\tiny$\vdots$} &\cellcolor{red!25} \\
         & $26pq + 2$ & $26pq + 2$ & \cellcolor{red!25} & \cellcolor{green!25}$280q - 210q'$ &\cellcolor{red!25} \\\hline
         \multirow{3}{7em}{$\nicefrac{3}{5}<\nicefrac{q'}{q}\leq \nicefrac{4}{5}$} & $26pq$ & $26pq$ & \cellcolor{green!25}$1420q - 1890q' - 7$ & \cellcolor{green!25}$60pq'-9pq$ & \cellcolor{red!25} \\
         & $26pq+1$ & $26pq+1$ & \cellcolor{green!25}{\tiny$\vdots$} & \cellcolor{green!25}{\tiny$\vdots$} & \cellcolor{red!25} \\
         & $26pq + 2$ & $26pq + 2$ & \cellcolor{green!25}$1420q - 1890q'$ &\cellcolor{green!25} $60pq' - 9pq + 3$ & \cellcolor{red!25} \\\hline
         \multirow{3}{7em}{$\nicefrac{4}{5}<\nicefrac{q'}{q}\leq 1$} & $26pq$ & $26pq$ & \cellcolor{green!25}$520q - 420q' - 2$ & \cellcolor{red!25} & \cellcolor{red!25} \\
         & $26pq+1$ & $26pq+1$ & \cellcolor{green!25}{\tiny$\vdots$} & \cellcolor{red!25} & \cellcolor{red!25} \\
         & $26pq + 2$ & $26pq + 2$ & \cellcolor{green!25}$520q - 420q'$ & \cellcolor{red!25} & \cellcolor{red!25} \\
         
         \hline
    \end{tabular}
    \caption{$pq \equiv 13 \mod 30$, $p = 7$, $e_0 = -3$}
    \label{13mod30p7}
\end{table}

\begin{table}[h]
    \centering
    \begin{tabular}{|c||c|c|c|c|c|}
    \hline
         & $0<\nicefrac{p'}{p}\leq \nicefrac{1}{5}$ & $\nicefrac{1}{5}<\nicefrac{p'}{p}\leq \nicefrac{2}{5}$ & $\nicefrac{2}{5}<\nicefrac{p'}{p}\leq \nicefrac{3}{5}$ & $\nicefrac{3}{5}<\nicefrac{p'}{p}\leq \nicefrac{4}{5}$ & $\nicefrac{4}{5}<\nicefrac{p'}{p}\leq 1$  \\ \hline\hline
         \multirow{3}{7em}{$0<\nicefrac{q'}{q}\leq \nicefrac{1}{5}$} & \cellcolor{red!25} & \cellcolor{red!25} & \cellcolor{green!25}$182q-2$ & $26pq -2$ & $26pq-2$\\
         & \cellcolor{red!25} & \cellcolor{red!25} & \cellcolor{green!25}{\tiny$\vdots$} & $26pq-1$ & $26pq-1$\\
         & \cellcolor{red!25} & \cellcolor{red!25} & \cellcolor{green!25}$182q$ & $26pq$ & $ 26pq$\\\hline
         \multirow{3}{7em}{$\nicefrac{1}{5}<\nicefrac{q'}{q}\leq \nicefrac{2}{5}$} & \cellcolor{red!25} & \cellcolor{green!25}$51pq - 60pq'-3$ & \cellcolor{green!25}$240q - 210q' - 10$ & $26pq - 2$ & $26pq - 2$ \\
         & \cellcolor{red!25} & \cellcolor{green!25}{\tiny$\vdots$} & \cellcolor{green!25}{\tiny$\vdots$} & $26pq - 1$ & $26pq - 1$ \\
         & \cellcolor{red!25} & \cellcolor{green!25}$51pq - 60pq'$ & \cellcolor{green!25}$240q - 210q' - 10$ & $26pq$ & $26pq$ \\\hline
         \multirow{3}{7em}{$\nicefrac{2}{5}<\nicefrac{q'}{q}\leq \nicefrac{3}{5}$} & \cellcolor{red!25}& \cellcolor{green!25}$777q - 1260q' - 7$ & \cellcolor{red!25} & $26pq - 2$ & $26pq - 2$\\
         & \cellcolor{red!25} & \cellcolor{green!25}{\tiny$\vdots$} & \cellcolor{red!25} & $26pq - 1$ & $26pq - 1$\\
         & \cellcolor{red!25} & \cellcolor{green!25}$777q - 1260q'$ & \cellcolor{red!25} & $26pq$ & $26pq$\\\hline
         \multirow{3}{7em}{$\nicefrac{3}{5}<\nicefrac{q'}{q}\leq \nicefrac{4}{5}$} & $26pq - 2$ & $26pq -2$ & $26pq -2$ & $26pq -2$ & $26pq -2$ \\
         & $26pq - 1$ & $26pq -1$ & $26pq -1$ & $26pq -1$ & $26pq -1$ \\
         & $26pq$ & $26pq$ & $26pq$ & $26pq$ & $26pq$ \\\hline
         \multirow{3}{7em}{$\nicefrac{5}{5}<\nicefrac{q'}{q}\leq 1$} & $26pq - 2$ & $26pq -2$ & $26pq -2$ & $26pq -2$ & $26pq -2$ \\
         & $26pq - 1$ & $26pq -1$ & $26pq -1$ & $26pq -1$ & $26pq -1$ \\
         & $26pq$ & $26pq$ & $26pq$ & $26pq$ & $26pq$ \\\hline
    \end{tabular}
    \caption{$pq \equiv 17 \mod 30$, $e_0 = -2$, $p=7$}
    \label{17mod30p7}
\end{table}

\begin{proof}
Consider Table \ref{Compmod30}. The largest value in this table is $27pq$, it is easily checked that for $p = 7$, $27pq < \nicefrac{N_0}{2}$ holds when $q > 42$. The remaining finite cases can be easily checked and then we have that $Y= \Sigma(2,3,5,7,q)$ satisfies $U\cdot HF_{red}(Y)$ except for the following cases,
\begin{enumerate}
    \item $pq \equiv 1 \mod 30$, $e_0 = -3$,
    \item $pq \equiv 29 \mod 30$, $e_0 = -2$,
    \item $pq \equiv 13 \mod 30$, $e_0 = -3$,
    \item $pq \equiv 17 \mod 30$, $e_0 = -2$.
\end{enumerate}

We will now address each of these cases separately. 

\paragraph{Case 1:} Consider Table \ref{1mod30}. Excluding Type B cases, the largest value here is $22pq$, and this satisfies $22pq < \nicefrac{N_0}{2}$ for $p=7$ when $q > \frac{14}{5}.$ Now it remains to check the Type B cases, but we have already shown that $15pq + 30pq' + 1 < \nicefrac{N_0}{2}$ for $q>6$. Thus this case is complete. 

\paragraph{Case 2:} Consider Table \ref{29mod30}, the largest value here is $26pq+2$, excluding Type B cases. It can be checked that for $p = 7$, $26pq+2 < \nicefrac{N_0}{2}$ is satisfied when $q > \nicefrac{214}{19}$. Now it remains to check the Type B cases. Consider when $\nicefrac{q'}{q} < \nicefrac{1}{2}$ and $\nicefrac{3}{5} < \nicefrac{p'}{p}$. We have already shown that $45pq - 30p'q < \nicefrac{N_0}{2}$ when $p>6$ and $q>\frac{6p}{p-6}$. Thus this is satisfied when $q > 42$ for $p=7$. The case where $\nicefrac{p'}{p} < \nicefrac{1}{2}$ and $\nicefrac{3}{5} < \nicefrac{q'}{q}$ can be checked similarly. Now consider when $\nicefrac{1}{2} < \nicefrac{q'}{q} < \nicefrac{3}{5}$ and $\nicefrac{p'}{p} < \nicefrac{1}{2}$. Since $p=7$, we must have $p' \in \{1,2,3\}$ to satisfy $\nicefrac{p'}{p} < \nicefrac{1}{2}$. When $p' = 1$ we can solve the Diophantine equation to find that 
$$q' = \frac{173q - 1}{210}$$
but then $\nicefrac{q'}{q} < \nicefrac{173}{210}$ and thus $\nicefrac{q'}{q} > \nicefrac{3}{5}$, so this case cannot occur for $p = 7$ and $p' = 1$.  For $p' = 2$ we follow the same argument to find that $$q' = \frac{143q - 1}{210},$$ but again this only satisfies $\nicefrac{1}{2} < \nicefrac{q'}{q} < \nicefrac{3}{5}$ for $\nicefrac{1}{38} < q < \nicefrac{1}{17}$. Therefore this case cannot occur for $p=7$ and $p' = 2$. For $p' = 3$ we check that $60pq' - 8pq + 2 < \nicefrac{N_0}{2}$ is satisfied for $q > \frac{210}{43}$ and thus this case is complete. The case where $\nicefrac{q'}{q} < \nicefrac{1}{2}, \nicefrac{1}{2} < \nicefrac{p'}{p} < \nicefrac{3}{5}$ can be checked to satisfy $60p'q - 8pq + 2 < \nicefrac{N_0}{2}$ when $q > \nicefrac{214}{15}$. Thus this case is complete. 

\paragraph{Case 3:} We proceed as in Case 2 and use the Diophantine equation to limit the possible values for $p'$ in each case. Then we check to make sure that $\nicefrac{q'}{q}$ satisfies the desired condition; if not that cell in the table is left blank and highlighted in red, as this case is not possible for $p = 7$. For the cases that are possible, we have found values that give us the desired 2 consecutive positive ones in our $\Delta$-sequence, presented in Table \ref{13mod30p7}. It remains to check that these occur before $\nicefrac{N_0}{2}$ in our $\Delta$ - sequence:
\begin{itemize}
    \item $\nicefrac{3}{5}<\nicefrac{p'}{p}<\nicefrac{4}{5}, \nicefrac{2}{5}<\nicefrac{q'}{q}<\nicefrac{3}{5}$\\
    It follows that $p' = 5,$ and thus $280q - 210q' < \nicefrac{N_0}{2}$ for $q > \frac{-208}{41}$.
    \item $\nicefrac{2}{5}<\nicefrac{p'}{p}<\nicefrac{3}{5}, \nicefrac{3}{5}<\nicefrac{q'}{q}<\nicefrac{4}{5}$\\
    It follows that $p' = 4,$ and thus $1420q - 1890q' - 7 < \nicefrac{N_0}{2}$ for $q > \frac{214}{45}$.
    \item $\nicefrac{3}{5}<\nicefrac{p'}{p}<\nicefrac{4}{5}, \nicefrac{3}{5}<\nicefrac{q'}{q}<\nicefrac{4}{5}$\\
    It follows that $p' = 5,$ and thus $60pq' - 9pq +3 < \nicefrac{N_0}{2}$ for $q > \frac{209}{73}$.
    \item $\nicefrac{2}{5}<\nicefrac{p'}{p}<\nicefrac{3}{5}, \nicefrac{4}{5}<\nicefrac{q'}{q}< 1$\\
    It follows that $p' = 3,$ and thus $520q - 420q' < \nicefrac{N_0}{2}$ for $q > \frac{214}{19}$.
\end{itemize}

\paragraph{Case 4:} We proceed exactly as in Case 3 and the values that give us the desired 2 consecutive positive ones in our $\Delta$-sequence are presented in Table \ref{17mod30p7}. It remains to check that these occur before $\nicefrac{N_0}{2}$ in our $\Delta$ - sequence:

\begin{itemize}
    \item $\nicefrac{2}{5}<\nicefrac{p'}{p}<\nicefrac{3}{5}, 0<\nicefrac{q'}{q}<\nicefrac{1}{5}$\\
    It is easily checked that $182q < \nicefrac{N_0}{2}$ for $q > \frac{210}{19}$.
    \item $\nicefrac{1}{5}<\nicefrac{p'}{p}<\nicefrac{2}{5}, \nicefrac{1}{5}<\nicefrac{q'}{q}<\nicefrac{2}{5}$\\
    It follows that $p' = 2,$ and thus $51pq - 60pq' < \nicefrac{N_0}{2}$ for $q > \frac{2354}{2331}$.
    
    \item $\nicefrac{2}{5}<\nicefrac{p'}{p}<\nicefrac{3}{5}, \nicefrac{1}{5}<\nicefrac{q'}{q}<\nicefrac{2}{5}$\\
    It follows that $p' = 3,$ and thus $240q - 210q' - 10 < \nicefrac{N_0}{2}$ for $q > \frac{64}{15}$.
    \item $\nicefrac{1}{5}<\nicefrac{p'}{p}<\nicefrac{2}{5}, \nicefrac{2}{5}<\nicefrac{q'}{q}< \nicefrac{3}{5}$\\
    It follows that $p' = 2,$ and thus $777q - 1260q' < \nicefrac{N_0}{2}$ for $q > \frac{222}{41}$.
\end{itemize}

\end{proof}

%%%%%%%%%%%%%%%%%%%%%%%%%%%%%%%%%%%%%%%%%%%%%%%%%%%%%%%%%%%%%%%%%%%%%%%%%%%%%%%%%%%%%%%%%%%%%%%%%%%%%%%%%%%%%%%%%%%%%%%%%%%%%%%%%%%%%%%%%%%%%%%%%%%%%%%%%%%%%%%%%%%%%

\subsection{$\Sigma(2,3,7,p,q)$ Preliminaries}
We proceed exactly as in the $\Sigma(2,3,5,p,q)$ case. Consider the Diophantine equation for $\Sigma(2,3,7,p,q)$ with solution $D = (e_0, 1, a, b, p', q')$, by reducing this modulo 42 we are able to solve for the Diophantine solutions based on $pq \mod 42.$

\begin{align*}
&pq \equiv 1 \mod 42 \implies D = (e_0,1,1,1,p',q') \quad \quad  &&pq \equiv 23 \mod 42 \implies D = (e_0,1,2,4,p',q') \\
&pq \equiv 5 \mod 42 \implies D = (e_0,1,2,3,p',q')\quad \quad &&pq \equiv 25 \mod 42 \implies D = (e_0,1,1,2,p',q') \\
&pq \equiv 11 \mod 42 \implies D = (e_0,1,2,2,p',q')\quad \quad &&pq \equiv 29 \mod 42 \implies D = (e_0,1,2,1,p',q') \\
&pq \equiv 13 \mod 42 \implies D = (e_0,1,1,6,p',q')\quad \quad &&pq \equiv 31 \mod 42 \implies D = (e_0,1,1,5,p',q') \\
&pq \equiv 17 \mod 42 \implies D = (e_0,1,2,5,p',q')\quad \quad &&pq \equiv 37 \mod 42 \implies D = (e_0,1,1,4,p',q') \\
&pq \equiv 19 \mod 42 \implies D = (e_0,1,1,3,p',q')\quad \quad &&pq \equiv 41 \mod 42 \implies D = (e_0,1,2,6,p',q') \\
\end{align*}

Note in all the above cases, $apq \equiv 1 \mod 3$ and $bpq \equiv 1 \mod 7$. 

We can also come up with a bound on $e_0$ using the Diophantine equation:
\begin{align*}
-1 &= 42pqe_0 + 21pq + 14apq + 6bpq + 42p'q + 42pq' \\
-42pqe_0 &= 21pq + 14apq+6bpq+42p'q + 42pq' + 1 \\
|e_0| &= \frac{21pq + 14apq + 6bpq + 42p'q + 42pq' + 1}{42pq}\\
|e_0| &= \frac{1}{2} + \frac{a}{3} + \frac{b}{7} + \frac{p'}{p} + \frac{q'}{q} + \frac{1}{42pq} \\
|e_0| &< \frac{1}{2} + \frac{2}{3} + \frac{6}{7} + \frac{p}{p} + \frac{q}{q} + \frac{1}{42*11*13} = \frac{4028}{1001} \approx 4.023\\
\end{align*}
Therefore $e_0 = -1, -2, -3, -4$. 

Using a similar simplification as we did for $\Sigma(2,3,5,p,q)$ we are able to compute $\Delta(xpq)$ and $\Delta(xpq \pm 1)$ for the $x$ values that satisfy the majority of our $\Sigma(2,3,7,p,q)$ cases. These values are presented in Table \ref{237pqfunctionvalues}. 
\begin{table}[h]
    \centering
    \begin{tabular}{|c||c|c|c|c|c|c|c|c|} 
        \hline 
         &$a = 1$ & $a = 1$ & $a = 1$ & $a = 1$ & $a = 2$ & $a = 2$ & $a = 2$ & $a = 2$ \\
         &$b = 1,2$ & $b = 3$ & $b = 4$ & $b = 5,6$ & $b = 1,2$ & $b = 3$ & $b = 4$ & $b = 5,6$ \\ \hline \hline 
         \rowcolor{lightgray!50}$\Delta(26pq - 1) =$  & $1 - |e_0|$ & $1 - |e_0|$ & $1 - |e_0|$ & $2 - |e_0|$&$2- |e_0|$ &$2- |e_0|$ &$2- |e_0|$ &$3- |e_0|$  \\ \hline
          \rowcolor{lightgray!50}$\Delta(26pq) =$  & 1 & 1 & 1 & 1 & 1 & 1 & 1 & 1  \\ \hline 
           \rowcolor{lightgray!50}$\Delta(26pq + 1) =$  & $|e_0| - 2$ & $|e_0| - 3$ & $|e_0| - 3$ & $|e_0| - 3$ & $|e_0| - 3$ &$ |e_0| - 4$ &$|e_0| - 4$ &$|e_0| - 4$  \\ \hline \hline 
           $\Delta(35pq - 1) =$  & $2 - |e_0|$ & $2 - |e_0|$ & $2 - |e_0|$ & $2 - |e_0|$&$3- |e_0|$ &$3- |e_0|$ &$3- |e_0|$ &$3- |e_0|$  \\ \hline
          $\Delta(35pq) =$  & 1 & 1 & 1 & 1 & 1 & 1 & 1 & 1  \\ \hline 
           $\Delta(35pq + 1) =$  & $|e_0| - 2$ & $|e_0| - 2$ & $|e_0| - 2$ & $|e_0| - 2$ & $|e_0| - 3$ &$ |e_0| - 3$ &$|e_0| - 3$ &$|e_0| - 3$  \\ \hline \hline 
           \rowcolor{lightgray!50}$\Delta(36pq - 1) =$  & $2 - |e_0|$ & $2 - |e_0|$ & $2 - |e_0|$ & $2 - |e_0|$&$2- |e_0|$ &$2- |e_0|$ &$2- |e_0|$ &$2- |e_0|$  \\ \hline
         \rowcolor{lightgray!50} $\Delta(36pq) =$  & 1 & 1 & 1 & 1 & 1 & 1 & 1 & 1  \\ \hline 
          \rowcolor{lightgray!50} $\Delta(36pq + 1) =$  & $|e_0| - 3$ & $|e_0| - 3$ & $|e_0| - 3$ & $|e_0| - 3$ & $|e_0| - 3$ &$ |e_0| - 3$ &$|e_0| - 3$ &$|e_0| - 3$  \\ \hline \hline 
          $\Delta(39pq - 1) =$  & $2 - |e_0|$ & $2 - |e_0|$ & $3 - |e_0|$ & $3 - |e_0|$&$2- |e_0|$ &$2- |e_0|$ &$3- |e_0|$ &$3- |e_0|$  \\ \hline
          $\Delta(39pq) =$  & 1 & 1 & 1 & 1 & 1 & 1 & 1 & 1  \\ \hline 
          $\Delta(39pq + 1) =$  & $|e_0| - 2$ & $|e_0| - 2$ & $|e_0| - 3$ & $|e_0| - 3$ & $|e_0| - 2$ &$ |e_0| - 2$ &$|e_0| - 3$ &$|e_0| - 3$  \\ \hline 
    \end{tabular}
    \caption{Key values of $\Delta(xpq)$ and $\Delta(xpq \pm 1)$ for $\Sigma(2,3,7,p,q)$}
    \label{237pqfunctionvalues}
\end{table}

%%%%%%%%%%%%%%%%%%%%%%%%%%%%%%%%%%%%%%%%%%%%%%%%%%%%%%%%%%%%%%%%%%%%%%%%%%%%%%%%%%%%%%%%%%%%%%%%%%%%%%%%%%%%%%%%%%%%%%%%%%%%%%%%%%%%%%%%%%%%%%%%%%%%%%%%%%%%%%%%%%%%%

\subsection{$\Sigma(2,3,7,p,q)$ Results}

Now we are ready to prove Lemma \ref{237pqlem}.

\lemBpq*

As with Lemma \ref{235pqlem}, we wish to find values that provide us with 2 consecutive $+1's$ in our $\Delta$-sequence that occur prior to $\nicefrac{N_0}{2}$. 
The values that satisfy this condition are found in Tables \ref{Compmod42} and \ref{1mod42}. \par

\begin{table}[h]
    \centering
    \begin{tabular}{|c||c|c|c|c|}
\hline
&$e_0 = -1$ & $e_1 = -2$ & $e_0 = -3$ & $e_0 = -4$\\ \hline \hline 
\multirow{2}{10em}{$pq \equiv 1 \mod 42$} & $26pq - 1$ & See Table \ref{1mod42} & $26pq$ & $26pq$\\
& $26pq$ &  & $26pq +1$ & $26pq +1$\\\hline
\multirow{2}{10em}{$pq \equiv 5 \mod 42$} & $26pq - 1$ & $26pq - 1$ & $39pq$ & $35pq$\\
& $26pq$ & $26pq$ & $39pq+1$ & $35pq+1$\\\hline 
\multirow{3}{10em}{$pq \equiv 11 \mod 42$} & $26pq - 1$ & $26pq - 1$ & $36pq$& $26pq$\\
& $26pq$ & $26pq$ & {\tiny$\vdots$} & $26pq+1$\\
&  &  & $36pq+3$& \\\hline 
\multirow{2}{10em}{$pq \equiv 13 \mod 42$} & $26pq - 1$ & $26pq - 1$ & $35pq$ & $26pq$ \\
& $26pq$ & $26pq$ & $35pq + 1$ & $26pq+1$ \\\hline 
\multirow{2}{10em}{$pq \equiv 17 \mod 42$} & $26pq - 1$ & $26pq - 1$  & $26pq - 1$ & $35pq$\\
& $26pq$ & $26pq$  & $26pq$ & $35pq+1$\\\hline 
\multirow{3}{10em}{$pq \equiv 19 \mod 42$} & $26pq - 1$ & $36pq$ & $35pq$ & $26pq$\\
& $26pq$ & $36pq + 1$ & $35pq+1$ & $26pq + 1$\\
&  & $36pq + 2$ &  & \\\hline 
\multirow{3}{10em}{$pq \equiv 23 \mod 42$} & $26pq - 1$ & $26pq - 1$ & $36pq$ & $35pq$\\
& $26pq$ & $26pq$ & {\tiny$\vdots$} & $35pq + 1$\\
&  &  & $36pq+3$ & \\\hline 
\multirow{3}{10em}{$pq \equiv 25 \mod 42$} & $26pq - 1$ & $36pq$ & $26pq$& $26pq$ \\
& $26pq$ & $36pq+1$ & $26pq+1$& $26pq+1$ \\
&  & $36pq+1$ & &  \\\hline 
\multirow{2}{10em}{$pq \equiv 29 \mod 42$} & $26pq - 1$ & $26pq - 1$  & $36pq$ & $26pq$\\
& $26pq$ & $26pq - 1$  & $36pq+1$ & $26pq+1$\\\hline 
\multirow{2}{7em}{$pq \equiv 31 \mod 42$} & $26pq - 1$ & $26pq - 1$ & $35pq$ &$26pq$\\ 
& $26pq$ & $26pq$ & $35pq+1$ &$26pq+1$\\ \hline 
\multirow{3}{10em}{$pq \equiv 37 \mod 42$} & $26pq - 1$ & $36pq$ & $35pq$ &$ 26pq$\\
& $26pq$ & {\tiny$\vdots$} & $35pq+1$ & $26pq+1$\\
&  & $36pq + 3$ &  &  \\ \hline
\multirow{2}{10em}{$pq \equiv 41 \mod 42$} & $26pq-1$ & $26pq-1$ & $26pq-1$ &$35pq$\\
& $26pq$ & $26pq$ & $26pq$ &$35pq+1$\\ \hline 
\end{tabular}
    \caption{General Cases for $Y = \Sigma(2,3,7,p,q)$}
    \label{Compmod42}
\end{table}

\begin{table}[h]
    \centering
    \begin{tabular}{|c||c|c|}
    \hline
         & $0<\nicefrac{p'}{p}<\nicefrac{1}{2}$ &$\nicefrac{1}{2}<\nicefrac{p'}{p}<1$  \\ \hline \hline
         \multirow{3}{7em}{$0<\nicefrac{q'}{q}<\nicefrac{1}{2}$} & $39pq$ & \cellcolor{green!25}$63pq - 42p'q - 1$\\
         & $39pq+1$ & \cellcolor{green!25}$63pq - 42p'q$\\
         & $39pq+2$ & \cellcolor{green!25}\\\hline
         \multirow{2}{7em}{$\nicefrac{1}{2}<\nicefrac{q'}{q}<1$} & \cellcolor{green!25}$63pq - 42pq' -1$ & \cellcolor{green!25}$63pq - 42pq' -1$ \\
         & \cellcolor{green!25}$63pq - 42pq'$ & \cellcolor{green!25}$63pq - 42pq'$ \\\hline
    \end{tabular}
    \caption{$pq \equiv 1 \mod 42$, $e_0 = -2$}
    \label{1mod42}
\end{table}

\begin{proof}
We must now show that the values  in Tables \ref{Compmod42} and \ref{1mod42} occur before $\nicefrac{N_0}{2}$. Since it is clear that $$ 26pq \leq 35pq \leq 36pq \leq 39pq < 39pq + 1 < 39pq + 2$$
we need only to check that $39pq + 1 \leq \nicefrac{N_0}{2}.$ It can be easily checked that this holds when $$p>6, \text{ and } q \geq \frac{42p + 4}{7p - 42},$$ which are satisfied in the following cases,
\begin{itemize}
    \item $p = 11,\quad q>25$,
    \item $p > 12$.
\end{itemize}
The finite cases that do not satisfy these inequalities may be easily checked to satisfy $$U \cdot HF_{red} \neq 0.$$ 

Thus it remains to address the following when $pq \equiv 1 \mod 42$ and $e_0 = -2$. First let $\nicefrac{1}{2}<\nicefrac{p'}{p}$, and $\nicefrac{q'}{q} < \nicefrac{1}{2}$. It can be easily checked that 
$$\Delta(63pq - 42pq' - 1) = 1 = \Delta(63pq - 42pq').$$
We must show that $63pq - 42pq' < \nicefrac{N_0}{2}$. This is true when $$p' > \frac{41pq + 42p + 42q}{84q}.$$
Using that $\nicefrac{1}{2} < \nicefrac{p'}{p}$, the above is satisfied when $p>42$ and $q > \frac{42p}{p-42}$. These inequalities are satisfied in the following cases,
\begin{multicols}{2}
\begin{itemize}
    \item $p > 84$
    \item $p = 43, \quad q>1806$
    \item $p = 47, \quad q>394$
    \item $p = 53, \quad q>202$
    \item $p = 55, \quad q>177$
    \item $p = 59, \quad q>145$
    \item $p = 61, \quad q>134$
    \item $p = 65, \quad q>118$
    \item $p = 67, \quad q>112$
    \item $p = 71, \quad q>102$
    \item $p = 73, \quad q>98$
    \item $p = 79, \quad q>89$
\end{itemize}
\end{multicols}
The case where $\nicefrac{p'}{p} < \nicefrac{1}{2}$ and $\nicefrac{q'}{q} > \nicefrac{1}{2}$ can be checked similarly. Thus our proof is complete. \\
\end{proof}

%%%%%%%%%%%%%%%%%%%%%%%%%%%%%%%%%%%%%%%%%%%%%%%%%%%%%%%%%%%%%%%%%%%%%%%%%%%%%%%%%%%%%%%%%%%%%%%%%%%%%%%%%%%%%%%%%%%%%%%%%%%%%%%%%%%%%%%%%%%%%%%%%%%%%%%%%%%%%%%%%%%%%

\subsection{$\Sigma(2,3,11,13,q)$ Preliminaries}
Before we proceed, we must first complete some algebraic preliminaries. This will follow the general idea of the previous two sections, but we take a slightly different approach. Consider $Y = \Sigma(2,3,11,13,p),$ with Diophantine solutions $D = (e_0, 1, a, b, c, p')$. Then the Diophantine equation simplifies as follows 
$$-1 = 858pe_0 + 429p + 286ap + 78bp + 66cp + 858p'. $$
Reducing this modulo 3, 11 and 13 we get that
$$qp \equiv 2 \mod 3, \quad bp \equiv 10 \mod 11, \quad cp \equiv 12 \mod 13.$$
We can also come up with a bound on $e_0$ using the Diophantine equation:
\begin{align*}
-1 &= 858pe_0 + 429p + 286ap + 78bp + 66cp + 858p'\\
|e_0| &= \frac{1}{2} + \frac{a}{3} + \frac{b}{11} + \frac{c}{13} + \frac{p'}{p} + \frac{1}{858p}\\
|e_0| &< \frac{1}{2} + \frac{2}{3} + \frac{10}{11} + \frac{12}{13} + \frac{p}{p} \approx 3.99
\end{align*}
Therefore $e_0 = -1, -2, -3$. \par 

%%%%%%%%%%%%%%%%%%%%%%%%%%%%%%%%%%%%%%%%%%%%%%%%%%%%%%%%%%%%%%%%%%%%%%%%%%%%%%%%%%%%%%%%%%%%%%%%%%%%%%%%%%%%%%%%%%%%%%%%%%%%%%%%%%%%%%%%%%%%%%%%%%%%%%%%%%%%%%%%%%%%%

\subsection{$\Sigma(2,3,11,13,q)$ Results}
We are now ready to prove the following.

\lemCpq

\begin{proof}
Consider the following,
\begin{align*}
    \Delta(781p) &= 1+|e_0|(781p) - \left \lceil \frac{781p}{2}\right \rceil - \left \lceil \frac{781ap}{3}\right \rceil - \left \lceil \frac{781bp}{11}\right \rceil - \left \lceil \frac{781cp}{13}\right \rceil - \left \lceil \frac{781pp'}{p}\right \rceil 
\end{align*}
Using that $$qp \equiv 2 \mod 3, \quad bp \equiv 10 \mod 11, \quad cp \equiv 12 \mod 13,$$ and that $$781 \equiv 1 \mod 3, \quad 781 \equiv 0 \mod 11, \quad 781 \equiv 1 \mod 13,$$ we know the value of the ceiling functions. Now we solve the Diophantine equation for $p',$
$$p' = \frac{-1}{858} - pe_0 - \frac{p}{2} - \frac{ap}{3} - \frac{bp}{11} - \frac{cp}{13}.$$
Now we are able to simplify $\Delta(781p)$ fully.
\begin{align*}
    \Delta(781p) &= 1+|e_0|(781p) -  \frac{781p + 1}{2} -  \frac{781ap+1}{3} - 71bp -  \frac{781cp+1}{13} - 781p'\\
    &= 1+|e_0|(781p) -  \frac{781p + 1}{2} -  \frac{781ap+1}{3} - 71bp -  \frac{781cp+1}{13}\\
    &\quad\quad- 781\left(\frac{-1}{858} - pe_0 - \frac{p}{2} - \frac{ap}{3} - \frac{bp}{11} - \frac{cp}{13}\right)\\
    &= 1+|e_0|(781p) -  \frac{781p + 1}{2} -  \frac{781ap+1}{3} - 71bp-  \frac{781cp+1}{13}\\
    &\quad\quad+ \frac{781}{858} +781pe_0 +\frac{781p}{2} + \frac{781qp}{3} + \frac{781bp}{11} + \frac{781cp}{13}\\
    &= 1 + \frac{781}{858} - \frac{1}{2} - \frac{1}{3} - \frac{1}{13}\\
    &=1
\end{align*}

Using this same process we have that

\[\Delta(781p + 1) = 
    \begin{dcases}
        |e_0| - 1, & a=1, c=1 \\
        |e_0| - 2, & a=1 \text{ and } c \neq 1, \text{ or }  a = 2, c=1 \\
       |e_0| - 3, & a = 2, c \neq 1 \\
    \end{dcases}
\]

\[\Delta(781p - 1) = 
    \begin{dcases}
        2 - |e_0|, & a=1, c \neq 12 \\
        3 - |e_0|, & a=1 \text{ and } c = 12, \text{ or }  a = 2, c \neq 12 \\
       4 -|e_0|, & a = 2, c = 12 \\
    \end{dcases}
\]

We can see in Table \ref{13pcase} the values that give us two consecutive ones, given conditions on $|e_0|$, $a$ and $c$. Since $781p - 1 < 781p < 781p+1,$ we need to show that $781p+1<\nicefrac{N_0}{2}.$ This is easily checked to be true when $p\geq 6$, which is always true for $Y= \Sigma(2,3,11,13,p).$ It remains to check the following two cases,

\begin{table}[h]
    \centering
    \begin{tabular}{|c||c|c|c|}
\hline
&$e_0 = -1$ & $e_1 = -2$ & $e_0 = -3$  \\ \hline \hline 
\multirow{2}{10em}{$a = 1, c = 1$} & $781p - 1$ & $781p$ & $781p$\\
& $781p $ &  $781p + 1$ & $781p + 1$\\\hline
\multirow{2}{10em}{$a = 1, c = 12$} & $781p - 1$ & $781p - 1$ & $781p$\\
& $781p$ &$781p$ & $781p + 1$ \\\hline 
\multirow{2}{10em}{$a = 1, c \neq 1, 12$}& $781p - 1$& & $781p$\\
& $781p $&  & $781p + 1$\\ \hline
\multirow{2}{10em}{$a = 2, c = 1$} & $781p - 1$ & $781p - 1$ & $781p$  \\
&$781p$ &$781p$ & $781p + 1$ \\\hline 
\multirow{2}{10em}{$a = 2, c = 12$} & $781p - 1$ & $781p - 1$  & $781p - 1$\\
& $781p $ & $781p $ & $781p$ \\\hline 
\multirow{2}{10em}{$a = 2, c \neq 1, 12$} & $781p - 1$ & $781p - 1$&  \\
& $781p$ & $781p$ &\\\hline
\end{tabular}
    \caption{General Cases for $Y = \Sigma(2,3,11,13,p)$}
    \label{13pcase}
\end{table}

\begin{enumerate}
\item $|e_0| = 2$, $a =1$, $c \neq 1, 12$
\item $|e_0| = 3$, $a = 1$, $c \neq 1,12$
\end{enumerate}

\paragraph{Case 1:} It can be checked that $\Delta(1287p - 858p') = 1$ and $\Delta(2387p - 858p') = 3-|e_0| = 1$. Now it remains to check that $1287p - 858p' < \nicefrac{N_0}/2$. We use the Diophantine equation to show that 
$$p' \geq \frac{505p-1}{858},$$
and then plug this into our above inequality. We see that $1287p - 858p' < \nicefrac{N_0}/2$ is satisfied when $p > \frac{860}{151} \approx 5.6,$ which is trivially true, and so this case is complete. 

\paragraph{Case 2:} It is easily checked that $\Delta(1573p - 858p' - 1) = 1$ and $\Delta(1573p - 858p') = 1$. We find that $1573p-858p' < \nicefrac{N_0}{2}$ when $p \geq 6$. 

\end{proof}

\section{Mapping Cone}\label{MappingCone}
In this section we give the proper notation and theorems used to set up the mapping cone for rational surgery on a knot in $S^3$. The mapping cone formula was introduced by Ozsv\'{a}th and Szab\'{o} in \cite{OSMC}, but we follow Gainullin's general structure in \cite{GainMC}. \par

\subsection{Background}
Given a knot $K$ in $S^3$ we associate a doubly-filtered complex $C = CFK^{\infty}(K).$ The generators of this complex are denoted $[\mathbf{x},i,j]$, where $(i,j)\in\Z \times \Z$ is the filtration. We have that $C$ is homotopy equivalent as a filtered complex to a complex where all filtration preserving differentials are trivial \cite{Rasmu}. Therefore we can replace the group, viewed as a chain complex, with its homology at each filtration level. We will work with this object, called the reduced complex. \par

This complex $C$ has an associated $U$-action, which corresponds to translation by the vector $(-1, -1).$ $C$ is invariant under this shift, so the group at filtration level $(i,j)$ is the same as the one at $(i-1, j-1)$, and we can view $U$ as the identity map between these. We know that $U$ is a chain map, and $U$ is invertible, so $C$ is an $\mathbb{F}[U,U^{-1}]$ - module. It follows that $C$ is generated by elements at filtration level $i=0$, and we will refer to this complex at filtration level $(0,j)$ as $\widehat{HFK}(K,j)$. \par

We can view $C$ as the complex used to compute $HF^{\infty}(S^3),$ where the knot provides an additional filtration. Then we can use the grading on $HF^{\infty}(S^3)$ to obtain a grading on $C$. The associated $U$-action decreases this grading by 2. \par

We can now define the following quotient complexes of $C$. 
$$A_k^+(K) = C\{i \geq 0 \text{ or } j \geq k\}, \quad k \in \mathbb{Z},$$
and 
$$B^+ = C\{i \geq 0\} \cong CF^+(S^3).$$
We also have two chain maps $v_k\colon A_k^+(K) \to B^+$, and $h_k\colon A_k^+(K) \to B^+$. The map $v_k$ is simply projection, sending all generators with $i>0$ to $0$, and the identity elsewhere. The map $h_k$ first projects onto $C\{j \geq k\},$ then multiplies by $U^k$, and then identifies $C\{j\geq 0\}$ with $C\{i \geq 0\}$ via a chain homotopy equivalence. This homotopy equivalence exists because both complexes represent $CF^+(S^3).$ It is known that $v_k$ is an isomorphism if $k \geq g(K)$ and $h_k$ is an isomorphism if $k \geq -g(K)$, where $g(K)$ is the genus of the knot \cite{OSMC}.\par

Now we can define chain complexes 
$$\mathcal{A}^+_{i, p/q}(K) = \bigoplus_{n \in \mathbb{Z}}(n, A^+_{\left \lfloor \frac{i+pn}{q}\right \rfloor}(K)),$$ and 
$$\mathcal{B}^+ = \bigoplus_{n \in \mathbb{Z}}(n,B^+).$$
We use the index $n$ just to distinguish between different copies of the same group. Then we have a chain map $$D^+_{i,p/q}\colon \mathcal{A}_{i,p/q}^+ \to \mathcal{B}^+,$$
where $D^+_{i,p/q}$ is given by the appropriate sums of $v_k$ and $h_k$. We require $v_k \colon (n, A^+_k(K) \to (n, B^+)$, and $h_k \colon (n, A^+_k(K) \to (n+1, B^+)$. We can define this map explicitly as follows,
$$D^+_{i,p/q}(\{k,a_k\})_{k \in \mathbb{Z}}) = \{(k,b_k)\}_{k \in\mathbb{Z}}, \text{ where } \quad b_k = v^+_{\left \lfloor  \frac{i+pk}{q}\right \rfloor}(a_k) + h^+_{\left \lfloor \frac{i+p(k-1)}{q}\right \rfloor}(a_{k-1}).$$
The complexes $A_k^+(K)$ and $B^+$ inherit a relative $\mathbb{Z}$-grading from $C$. Now let $\mathbb{X}^+_{i,p/q}$ denote the mapping cone of $D^+_{i,p/q}$. This has a relative $\mathbb{Z}$-grading by requiring $D^+_{i,p/q}$ decreases the grading by 1. 

\begin{theorem}[{O}zsv\'ath-{S}zab\'o, \cite{OSMC}]
There is a relatively graded isomorphism of $\F[U]$-modules $$H_*\left(\X^+_{i,p/q}\right) \cong HF^+\left(S^3_{p/q}(K), i\right).$$
\end{theorem}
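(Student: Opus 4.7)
The plan is to reconstruct the Ozsv\'ath--Szab\'o proof from \cite{OSMC}, which builds up from integer surgery to rational surgery in three layers. First I would recall the large-$N$ surgery theorem: for $N$ sufficiently large compared to $g(K)$, and for each Spin$^{c}$ structure $s$ on $S^{3}_{N}(K)$ (identified with an integer after an appropriate affine shift), there is a relatively graded $\F[U]$-module isomorphism $HF^{+}(S^{3}_{N}(K),s)\cong H_{*}(A^{+}_{s}(K))$. The proof of this input is a Heegaard diagram / winding argument: stabilizing a genus-one diagram for $S^{3}_{N}(K)$ by winding transverse circles yields a diagram whose holomorphic disk count in the appropriate Spin$^{c}$ structure matches the quotient $C\{i\geq 0 \text{ or } j\geq s\}$, up to truncation that disappears for $N$ large.

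Next I would establish the integer surgery case $q=1$. Here the key tool is the surgery exact triangle for the triple $(S^{3},\,S^{3}_{N}(K),\,S^{3}_{N+p}(K))$ for large $N$. By splitting this triangle according to Spin$^{c}$ structures and invoking the large-$N$ identification on two of the three terms, one sets up a long exact sequence that, after taking a limit $N\to\infty$, collapses to a mapping cone whose homology computes $HF^{+}(S^{3}_{p}(K),i)$. The two chain maps in this cone must then be identified with the projection $v_{k}$ and the cap-product $h_{k}$; this identification proceeds by computing the $2$-handle cobordism maps on $HF^{+}$ using the standard triangle-counting description and matching them against the formulas defining $v_{k}$, $h_{k}$.

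For general $p/q$ surgery, I would realize $S^{3}_{p/q}(K)$ as integer surgery on a two-component link $K\sqcup C$, where $C$ is an unknot linking $K$ once and framed so that blowing down $C$ converts integer surgery on $K\sqcup C$ into $p/q$-surgery on $K$. (Equivalently, one may use a continued-fraction expansion and iterate.) Applying the link surgery mapping cone formula, or equivalently iterating the integer surgery triangle along the auxiliary unknot, produces a bigraded mapping cone. Collapsing one direction by using that surgery on $C$ in the auxiliary manifold is large in the relevant sense yields a one-dimensional cone whose $\mathcal{A}$-term is exactly $\bigoplus_{n\in\Z}A^{+}_{\lfloor(i+pn)/q\rfloor}(K)$: the floor function $\lfloor(i+pn)/q\rfloor$ appears as the Spin$^{c}$ label on the $n$-th copy after restricting Spin$^{c}$ structures from the link surgery to $S^{3}_{p/q}(K)$ and indexing by $i$.

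The main obstacle will be step two, and in particular the identification of the cobordism-induced maps in the exact triangle with $v_{k}$ and $h_{k}$ at the chain level, together with the correct Spin$^{c}$ and grading bookkeeping. This requires tracking how triangle counts in a suitable Heegaard triple diagram decompose according to Spin$^{c}$ structures on the cobordism, and comparing them term-by-term against the quotient maps $C\to C\{i\geq 0\}$ and $C\to C\{j\geq k\}\xrightarrow{U^{k}} C\{i\geq 0\}$. Once this is accomplished, the collapse in step three is a direct algebraic manipulation, and the relative $\Z$-grading statement is automatic from the fact that $v_{k}$, $h_{k}$, and the surgery cobordism maps are all degree-homogeneous, with the convention that $D^{+}_{i,p/q}$ drops grading by one fixing the degree shift on the cone $\X^{+}_{i,p/q}$.
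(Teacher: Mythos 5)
This statement is not proved in the paper at all: it is quoted directly from Ozsv\'ath--Szab\'o \cite{OSMC}, so the only meaningful comparison is with their original argument. Your outline reproduces the broad architecture of that argument correctly --- the large-$N$ surgery theorem proved by winding a doubled Heegaard diagram, then the integer surgery formula obtained from the surgery exact triangle for $(S^3, S^3_N(K), S^3_{N+p}(K))$ with the cobordism maps identified with $v_k$ and $h_k$ via triangle counts and Spin$^c$ bookkeeping, which you rightly flag as the technical heart. Where you genuinely diverge is the passage from integer to rational surgeries: Ozsv\'ath and Szab\'o do not iterate the exact triangle along an auxiliary unknot or invoke a link surgery formula (the latter postdates \cite{OSMC}); instead they realize $S^3_{p/q}(K)$ as an \emph{integral} surgery on the knot obtained by connect-summing $K$ with the core of a Heegaard solid torus in a lens space $L(q,r)$, apply the (suitably generalized) integer surgery formula to that rationally null-homologous knot, and extract the $A^+_{\lfloor (i+pn)/q\rfloor}$ terms from the K\"unneth formula for the connected sum together with the Spin$^c$ identifications --- this is exactly where the floor function appears. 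Your continued-fraction/link-iteration route can be made to work (it is essentially the later Manolescu--Ozsv\'ath perspective) but it requires a chain of auxiliary unknots for general $q$, not a single meridional component, and it trades the K\"unneth computation for substantially more bookkeeping in collapsing the iterated cone; the original connected-sum trick buys a much shorter reduction to machinery already in place. As a proof proposal yours is a plausible plan rather than a proof: the identification of the cobordism maps with $v_k$ and $h_k$, which you defer, is precisely the step that consumes most of the work in \cite{OSMC}.
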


Note that $i$ indexes the Spin$^c$-structure of $HF^+\left(S^3_{p/q}(K),i\right)$. Since we are working with Seifert fibered integral homology spheres, our spaces will only have one Spin$^c$-structure, thus we will often drop the $i$, as this index is not important for our purposes. \par

The picture you should have in your head for the mapping cone is depicted in Figure \ref{GenMCpic} for $\nicefrac{2}{3}$ surgery of some knot in $S^3$. We have $3$ copies of each group, which the subscripts denote, and the $h_k$ maps go over $2$. For general $\nicefrac{p}{q}$ surgery we will have $q$ copies of each group and the $h_k$ maps will go over $p$. \par

Since we know that $v_k$ is an isomorphism if $k \geq g(K)$ and $h_k$ is an isomorphism if $k \geq -g(K)$, when we are working with a specific knot, we know much of the mapping cone is acyclic. We can delete this acyclic part, which we will refer to as the truncated mapping cone. This truncated mapping cone is depicted for $\nicefrac{2}{3}$ surgery of some genus 2 knot in $S^3$ in Figure \ref{TrunMCpic}

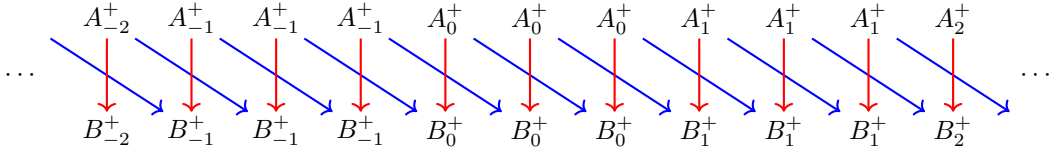
\begin{figure}
    \centering
    \begin{tikzpicture}[thick,scale=0.75, every node/.style={scale=1}]
    \node at (-7.5,1) {$A_{-2}^+$};
    \node at (-6,1) {$A_{-1}^+$};
    \node at (-4.5,1) {$A_{-1}^+$};
    \node at (-3,1) {$A_{-1}^+$};
    \node at (-1.5,1) {$A_{0}^+$};
    \node at (0,1) {$A_{0}^+$};
    \node at (1.5,1) {$A_{0}^+$};
    \node at (3,1) {$A_{1}^+$};
    \node at (4.5,1) {$A_{1}^+$};
    \node at (6,1) {$A_{1}^+$};
    \node at (7.5,1) {$A_{2}^+$};
    \node at (-7.5,-1) {$B_{-2}^+$};
    \node at (-6,-1) {$B_{-1}^+$};
    \node at (-4.5,-1) {$B_{-1}^+$};
    \node at (-3,-1) {$B_{-1}^+$};
    \node at (-1.5,-1) {$B_{0}^+$};
    \node at (0,-1) {$B_{0}^+$};
    \node at (1.5,-1) {$B_{0}^+$};
    \node at (3,-1) {$B_{1}^+$};
    \node at (4.5,-1) {$B_{1}^+$};
    \node at (6,-1) {$B_{1}^+$};
    \node at (7.5,-1) {$B_{2}^+$};
    \node at (-9,0) {$\cdots$};
    \node at (9,0) {$\cdots$};
    
    \draw [blue, ->](-7,.65) -- (-5,-.65);
    \draw [blue, ->](-8.5,.65) -- (-6.5,-.65);
    \draw [blue, ->](-4,.65) -- (-2,-.65);
    \draw [blue, ->](-1,.65) -- (1,-.65);
    \draw [blue, ->](2,.65) -- (4,-.65);
    \draw [blue, ->](5,.65) -- (7,-.65);
    \draw [blue, ->](-5.5,.65) -- (-3.5,-.65);
    \draw [blue, ->](-2.5,.65) -- (-.5,-.65);
    \draw [blue, ->](.5,.65) -- (2.5,-.65);
    \draw [blue, ->](3.5,.65) -- (5.5,-.65);
    \draw [blue, ->](6.5,.65) -- (8.5,-.65);
    
    \draw[red,->] ((-7.5,.65) -- (-7.5,-.65);
    \draw[red,->] ((-6,.65) -- (-6,-.65);
    \draw[red,->] ((-4.5,.65) -- (-4.5,-.65);
    \draw[red,->] ((-3,.65) -- (-3,-.65);
    \draw[red,->] ((-1.5,.65) -- (-1.5,-.65);
    \draw[red,->] ((0,.65) -- (0,-.65);
    \draw[red,->] ((7.5,.65) -- (7.5,-.65);
    \draw[red,->] ((6,.65) -- (6,-.65);
    \draw[red,->] ((4.5,.65) -- (4.5,-.65);
    \draw[red,->] ((3,.65) -- (3,-.65);
    \draw[red,->] ((1.5,.65) -- (1.5,-.65);
    \end{tikzpicture}
    \caption{The Mapping cone for $\nicefrac{2}{3}$-surgery on a knot $K \subset S^3$. The $v_k$ maps are red, $h_k$ maps are blue.}
    \label{GenMCpic}
\end{figure}

\begin{figure}
    \centering
    \begin{tikzpicture}[thick,scale=0.75, every node/.style={scale=1}]
    \node at (-6,1) {$A_{-1}^+$};
    \node at (-4.5,1) {$A_{-1}^+$};
    \node at (-3,1) {$A_{-1}^+$};
    \node at (-1.5,1) {$A_{0}^+$};
    \node at (0,1) {$A_{0}^+$};
    \node at (1.5,1) {$A_{0}^+$};
    \node at (3,1) {$A_{1}^+$};
    \node at (4.5,1) {$A_{1}^+$};
    \node at (6,1) {$A_{1}^+$};
    \node at (-6,-1) {$B_{-1}^+$};
    \node at (-4.5,-1) {$B_{-1}^+$};
    \node at (-3,-1) {$B_{-1}^+$};
    \node at (-1.5,-1) {$B_{0}^+$};
    \node at (0,-1) {$B_{0}^+$};
    \node at (1.5,-1) {$B_{0}^+$};
    \node at (3,-1) {$B_{1}^+$};
    \node at (4.5,-1) {$B_{1}^+$};
    \node at (6,-1) {$B_{1}^+$};

    \draw [blue, ->](-4,.65) -- (-2,-.65);
    \draw [blue, ->](-1,.65) -- (1,-.65);
    \draw [blue, ->](2,.65) -- (4,-.65);
    \draw [blue, ->](-5.5,.65) -- (-3.5,-.65);
    \draw [blue, ->](-2.5,.65) -- (-.5,-.65);
    \draw [blue, ->](.5,.65) -- (2.5,-.65);
    \draw [blue, ->](3.5,.65) -- (5.5,-.65);
    
    \draw[red,->] ((-6,.65) -- (-6,-.65);
    \draw[red,->] ((-4.5,.65) -- (-4.5,-.65);
    \draw[red,->] ((-3,.65) -- (-3,-.65);
    \draw[red,->] ((-1.5,.65) -- (-1.5,-.65);
    \draw[red,->] ((0,.65) -- (0,-.65);
    \draw[red,->] ((6,.65) -- (6,-.65);
    \draw[red,->] ((4.5,.65) -- (4.5,-.65);
    \draw[red,->] ((3,.65) -- (3,-.65);
    \draw[red,->] ((1.5,.65) -- (1.5,-.65);
    \end{tikzpicture}
    \caption{The truncated mapping cone for $\nicefrac{2}{3}$-surgery on a genus 2 knot $K \subset S^3$. The $v_k$ maps are red, $h_k$ maps are blue.}
    \label{TrunMCpic}
\end{figure}

Now let $\mathbf{A}^+_{i,p/q}(K) = H_*(A^+_k(K))$ and $ \mathbf{B}^+ = H_*(B^+),$ $\mathbb{A}^+_{i,p/q}(K) = H_*(\mathcal{A}^+_{i,p/q}(K))$, $\mathbb{B}^+ = H_*(\mathcal(B)^+)$ and let $\mathbf{v}_k, \mathbf{h}_k$ and $\mathbf{D}^+_{i,p/q}$ denote the maps induced by $v_k$, $h_k$ and $D^+_{i,p/q}$ on homology respectively.  \par 
We have that the short exact sequence
$$0 \longrightarrow \mathcal{B}^+ \overset{\iota}\longrightarrow\X^+_{i,p/q} \overset{j}\longrightarrow \mathcal{A}^+_{i,p/q}(K) \longrightarrow 0$$
induces the exact triangle
\begin{center}
\begin{tikzcd}
\mathbb{A}^+_{i,p/q(K)} \arrow[r, "\mathbf{D}^+_{i,p/q}"]
& \mathbb{B}^+ \arrow[d, "\iota_*"] \\
& H_*\left(\X^+_{i,p/q}\right) \cong HF^+\left(S^3_{p/q}(K),i\right) \arrow[lu, "j_*"].
\end{tikzcd}
\end{center}

All maps above are $U$-equivariant. From the exact triangle we have the following,

\begin{cor}(Gainullin, \cite[Section 2]{GainMC})
If the surgery slope $\nicefrac{p}{q}$ is positive, then the map $\mathbf{D}^+_{i,p/q}$ will be surjective, so $HF^+\left(S^3_{p/q}(K), i\right) \cong \ker\left(\mathbf{D}^+_{i,p/q}\right).$
\end{cor}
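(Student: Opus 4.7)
The plan is to split the corollary into two assertions: (i) the map $\mathbf{D}^+_{i,p/q}$ is surjective when $p/q > 0$, and (ii) this surjectivity implies the identification $HF^+(S^3_{p/q}(K),i) \cong \ker(\mathbf{D}^+_{i,p/q})$. The second assertion is immediate from the exact triangle just established: unrolling it into the long exact sequence
\begin{equation*}
\cdots \to \mathbb{A}^+_{i,p/q}(K) \xrightarrow{\mathbf{D}^+_{i,p/q}} \mathbb{B}^+ \xrightarrow{\iota_*} H_*(\X^+_{i,p/q}) \xrightarrow{j_*} \mathbb{A}^+_{i,p/q}(K) \xrightarrow{\mathbf{D}^+_{i,p/q}} \mathbb{B}^+ \to \cdots,
\end{equation*}
surjectivity of $\mathbf{D}^+_{i,p/q}$ forces $\iota_* = 0$, so by exactness $j_*$ is injective and identifies $H_*(\X^+_{i,p/q}) \cong HF^+(S^3_{p/q}(K),i)$ with $\ker(\mathbf{D}^+_{i,p/q})$ as an $\F[U]$-module; $U$-equivariance is inherited from that of $\iota$ and $j$.

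For (i), I would fix a finitely supported $\{b_k\}_{k\in\Z} \in \mathbb{B}^+$ and try to construct a finitely supported preimage $\{a_k\}_{k \in \Z} \in \mathbb{A}^+_{i,p/q}(K)$ satisfying
\begin{equation*}
b_k = \mathbf{v}_{f(k)}(a_k) + \mathbf{h}_{f(k-1)}(a_{k-1}) \quad \text{for all } k \in \Z,
\end{equation*}
where $f(k) = \lfloor (i + pk)/q \rfloor$. The key structural input, recalled in the excerpt, is that $\mathbf{v}_k$ is an isomorphism whenever $k \geq g(K)$ and $\mathbf{h}_k$ is an isomorphism whenever $k \leq -g(K)$. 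Because $p/q > 0$, the function $f$ is monotonic and unbounded in both directions, so $\mathbf{v}_{f(k)}$ becomes an isomorphism for all sufficiently large $k$ and $\mathbf{h}_{f(k-1)}$ becomes an isomorphism for all sufficiently small $k$. This asymmetry is precisely what distinguishes positive from negative surgery slopes.

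The construction then proceeds by truncating the mapping cone outside an interval where $\mathbf{v}$ and $\mathbf{h}$ already fail to be isomorphisms. In the acyclic tails the columns can be iteratively absorbed without changing $H_*(\X^+_{i,p/q})$, reducing the surjectivity of $\mathbf{D}^+_{i,p/q}$ to a finite linear algebra statement on the truncated diagram illustrated in Figure \ref{TrunMCpic}. Within the truncation I would descend from the top: at each index $k$ for which $\mathbf{v}_{f(k)}$ is an iso, solve $a_k = \mathbf{v}_{f(k)}^{-1}(b_k - \mathbf{h}_{f(k-1)}(a_{k-1}))$; and ascend from the bottom using the iso $\mathbf{h}$-maps in the symmetric way. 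Positivity of $p/q$ ensures that these two solution sweeps propagate toward each other and meet.

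The main obstacle is verifying that the two sweeps are compatible in the middle range, where neither $\mathbf{v}_{f(k)}$ nor $\mathbf{h}_{f(k-1)}$ is an isomorphism but both contribute to each row of $\mathbb{B}^+$. The natural formulation is to observe that the cokernel of the truncated $\mathbf{D}^+_{i,p/q}$ embeds into the cokernel of a composite of $\mathbf{v}$- and $\mathbf{h}$-maps across the truncation; when $p > 0$ this composite is surjective because the $\mathbf{v}$-iso at the top and the $\mathbf{h}$-iso at the bottom combine to cover every row of $\mathbb{B}^+$, whereas for $p < 0$ the orientations of the $h$-arrows would reverse and instead force an injective rather than surjective $\mathbf{D}^+_{i,p/q}$. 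Carrying out this diagram chase carefully, relative to the truncated cone in Figure \ref{TrunMCpic}, is where the bulk of the technical work lies.
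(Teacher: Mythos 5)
Your part (ii) is exactly how the paper (following Gainullin) obtains the identification: surjectivity kills $\iota_*$, so $j_*$ maps $H_*(\mathbb{X}^+_{i,p/q})$ isomorphically onto $\ker(\mathbf{D}^+_{i,p/q})$; that half is fine. The gap is in part (i), and it sits precisely where you yourself locate "the bulk of the technical work." The two sweeps you describe do not actually produce a preimage: the descent step $a_k = \mathbf{v}_{f(k)}^{-1}\bigl(b_k - \mathbf{h}_{f(k-1)}(a_{k-1})\bigr)$ consumes $a_{k-1}$, so it propagates upward out of the middle region rather than downward into it, and the symmetric $\mathbf{h}$-sweep has the same defect; both recursions need initial data exactly in the range where neither map is invertible. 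The claim that the cokernel of the truncated $\mathbf{D}^+_{i,p/q}$ embeds in the cokernel of some composite of $\mathbf{v}$- and $\mathbf{h}$-maps is asserted, not proved, and it is not clear how such a composite "covers every row of $\mathbb{B}^+$" in the middle columns. (Two side remarks: even after truncation you must rerun the exact-triangle argument for the truncated cone, since the corollary is stated for the full $\mathbf{D}^+_{i,p/q}$; and for negative slopes the $\mathbf{h}$-arrows do not reverse --- the cone is defined the same way, and what changes is that $f$ decreases, making $\mathbf{D}^+$ injective rather than surjective.)

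The missing idea, which is how the cited argument of Gainullin/Ozsv\'ath--Szab\'o runs and which makes the middle range a non-issue, is to work on homology from the start: $\mathbb{B}^+$ is a direct sum of copies of $\mathcal{T}^+$, and \emph{every} $\mathbf{v}_k$ and $\mathbf{h}_k$ is already surjective, because on the tower summand of $\mathbf{A}^+_k$ they act by $U^{V_k}$ and $U^{H_k}$ and multiplication by any power of $U$ is surjective on $\mathcal{T}^+$. Positivity of $p/q$ is needed only to get a \emph{finitely supported} preimage. Concretely: given $\{b_k\}$ supported in $[s_1,s_2]$, set $a_k=0$ for $k<s_1$ and, moving rightward, choose a tower element $a_k$ with $U^{V_{f(k)}}a_k = b_k + U^{H_{f(k-1)}}a_{k-1}$. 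If the right-hand side has $U$-height $m_k$, one may take $a_k$ of height $m_k+V_{f(k)}$, and then the error fed into slot $k+1$ has height $\max(m_k - f(k),0)$, using $H_j = V_j + j$. Once $k>s_2$ the heights satisfy $m_{k+1}=\max(m_k-f(k),0)$, and since $p/q>0$ forces $f(k)\to+\infty$, the error vanishes after finitely many steps, so one may set $a_k=0$ from then on. This proves surjectivity of the full $\mathbf{D}^+_{i,p/q}$ directly, with no truncation and no matching of sweeps; your outline as written does not close this step.
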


We need to establish some important decompositions of the maps given earlier. First we have that we can decompose $\mathbf{A}_k^+(K)$ as $\mathbf{A}^+_k(K) \cong \mathbf{A}^T_k(K) \oplus \mathbf{A}^{red}_k(K)$, where $\mathbf{A}^{red}_k(K)$ is a finite-dimensional vector space in the kernel of some power of $U$, and $\mathbf{A}^T_k(K) \cong \mathcal{T}^+$. We also have an isomorphism $\phi \colon \mathbf{A}_0^+(K) \to \mathbf{A}_0^+(K)$ that is $U$-equivariant, grading preserving, and satisfies $\mathbf{v}_k \circ \phi = \mathbf{h}_k,$ where we view $\mathbf{v}_k$ and $\mathbf{h}_k$ as maps into $CF^+(S^3),$ and $\phi$ as an isomorphism from $CF^+(S^3)$ to itself. Note, we will often drop $K$ from the above notation and refer to $\mathbf{A}_k^+(K)$ as simply $\mathbf{A}_k^+$.\par
 We have that 
 $$\mathbf{D}^+_{i,p/q} = \mathbf{D}^T_{i,p/q} \oplus \mathbf{D}^{red}_{i,p/q}$$
  where the first map is the restriction of $\mathbf{D}^+_{i,p/q}$ to $\mathbb{A}^T_{i,p/q}(K) = \bigoplus_{n \in \Z}\mathcal{T}^+$ and the second one is the restriction to $\mathbb{A}^{red}_{i,p/q}(K)$. Similarly we have restrictions of $\mathbf{v}_k$ and $\mathbf{h}_k$ to $\mathcal{T}^+$ which will just be some powers of $U$. These powers of $U$ will be denoted by $V_k$ and $H_k$. We will need the following properties of these integers, (see \cite[Section 2]{NiWuCosSur}, \cite[Section 7]{Rasmu}, \cite[Lemma 2.5]{HLZ})
 
 \begin{align}
&V_k \geq V_{k+1} \text{ and } H_k \leq H_{k+1}, \quad \forall k \in \Z,\label{EQ3}\\
&V_k = H_{-k},\quad  \forall k \in \Z, \\
&V_k \to + \infty \text{ as } k \to - \infty \quad \text{ and } \quad  H_k \to + \infty \text{ as } k \to +\infty \\
&V_k = 0 \text{ for } k \geq g(K)\quad  \text{ and } \quad H_k = 0 \text{ for } k \leq -g(K),\label{EQ2}\\
&V_k - 1 \leq V_{k+1} \leq V_k, \forall k \in \Z,\\
&H_k = V_k + k, \forall k \in \mathbb{Z}. \label{EQ1}
\end{align}

Finally, we state a Corollary of Gainullin that we will reference frequently in the next section. 

\begin{cor}(\cite[Corollary 30]{GainMC})
$U^g \cdot \mathbb{A}^{red}_{i,p/q} = 0.$
\label{genusthm}
\end{cor}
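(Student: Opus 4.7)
Since $\mathbb{A}^{red}_{i,p/q}(K) = \bigoplus_{n \in \mathbb{Z}} A^{red}_{\lfloor (i+pn)/q\rfloor}(K)$ as graded $\mathbb{F}[U]$-modules, it suffices to show $U^g \cdot A^{red}_k(K) = 0$ for each individual integer $k$. I would begin by disposing of the two ``large-$|k|$'' ranges. For $k \geq g(K)$ the chain map $v_k\colon A^+_k \to B^+$ is already recorded to be a chain homotopy equivalence, so $\mathbf{A}^+_k \cong \mathbf{B}^+ \cong \mathcal{T}^+$ has trivial reduced summand; the case $k \leq -g(K)$ is symmetric, using $h_k$ in place of $v_k$. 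In either of these ranges the statement is vacuous.

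For the substantive range $-g(K) < k < g(K)$ I would argue geometrically inside the reduced model of $CFK^{\infty}(K)$. In this model the generators sit on diagonals $j = i + s$ with $|s| \leq g(K)$, and on each such diagonal the quotient complex $A^+_k = C\{i \geq 0 \text{ or } j \geq k\}$ retains generators at all $i \geq i_0(s) := \min\{0,\, k - s\}$. The $U$-action is the shift $(i,j) \mapsto (i-1, j-1)$, with the convention that any generator landing in $\{i < 0,\, j < k\}$ is set to zero. Each diagonal thereby contributes a semi-infinite $\mathbb{F}[U^{-1}]$-strand whose lowest $g$ generators are killed by $U^g$ at the chain level, while everything above assembles into the tower. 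Consequently the torsion summand $\mathbf{A}^{red}_k$, whose support is concentrated precisely in these ``bottom $g$'' regions of the diagonals, is annihilated by $U^g$.

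The main obstacle will be upgrading this chain-level bound to a clean homology-level statement: at the chain level the kernel of $U^g$ contains both representatives of genuine torsion classes and elements in the tower part that have not yet been pushed high enough to escape the defining region, and one must disentangle the two via the splitting $\mathbf{A}^+_k \cong \mathcal{T}^+ \oplus \mathbf{A}^{red}_k$. A perhaps cleaner alternative would be to invoke the large-surgery identification $A^+_k \simeq CF^+(S^3_n(K), [k])$ for $n \gg 0$, after which the corollary becomes the known bound on the $U$-nilpotency of $HF^+_{red}$ of a knot surgery by the Seifert genus of the knot, proved by a surgery-exact-triangle argument together with the fact that $\mathbf{v}_k$ and $\mathbf{h}_k$ both kill torsion (as $\mathcal{T}^+$ is torsion-free).
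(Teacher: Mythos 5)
First, note that the paper does not prove this statement at all: it is imported verbatim from Gainullin (\cite[Corollary 30]{GainMC}), so your sketch has to stand on its own. Your reduction to a single $k$ and the disposal of $|k|\geq g$ via $v_k$ (resp.\ $h_k$) being quasi-isomorphisms are fine, but in the range $|k|<g$ there is a genuine gap, and it sits exactly where you flag it. The assertion that ``the torsion summand is supported in the bottom $g$ positions of each diagonal'' \emph{is} the content of the corollary, and the chain-level observation does not deliver it: at the chain level \emph{every} generator of $A_k^+$ is killed by some power of $U$ (the tower exists in homology only through infinite $U$-divisibility), so knowing which generators die under $U^g$ says nothing yet about which homology classes do; a class in $\mathbf{A}^{red}_k$ could a priori be represented only by cycles whose components sit arbitrarily high on the diagonals. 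Your proposed repair makes this worse rather than better: $\mathcal{T}^+$ is not torsion-free as an $\mathbb{F}[U]$-module (every element is killed by a power of $U$), and $\mathbf{v}_k,\mathbf{h}_k$ do \emph{not} automatically vanish on $\mathbf{A}^{red}_k$ --- the whole point of Lemma \ref{lvls2} in Section \ref{MappingCone} is that ruling out such nonvanishing requires work. Likewise, the ``known bound by the Seifert genus'' for large surgeries is essentially the statement being proved (the result of \cite{GainMC} quoted in Section \ref{introduction} gives exponent $g+\lceil g_4/2\rceil$ for surgeries, not $g$ in each Spin$^c$ structure), so invoking it is circular.

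Here is the mechanism your sketch is missing. Fix $k\geq 0$ and let $x\in\mathbf{A}^+_k$. On the tower summand $\mathbf{v}_k$ acts by $U^{V_k}$, which is surjective on $\mathcal{T}^+\cong H_*(B^+)$, so there is a tower class $y$ with $\mathbf{v}_k(x)=\mathbf{v}_k(y)$. Then $x-y\in\ker(\mathbf{v}_k)$, which by the long exact sequence is the image of $H_*\bigl(C\{i<0,\ j\geq k\}\bigr)$; since every generator of that subcomplex satisfies $k\leq j\leq i+g$ and $i\leq -1$, it occupies at most $g-k$ values of $i$ and is annihilated by $U^{g-k}$ at the chain level. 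Hence $U^{g-k}x=U^{g-k}y$ lies in the tower, which is exactly the statement $U^{g-k}\cdot\mathbf{A}^{red}_k=0$. For $k\leq 0$ this argument run with $v_k$ only yields the exponent $g+|k|$, which is not good enough; one must instead use $\mathbf{h}_k$ (surjective on the tower, acting by $U^{H_k}$) and the subcomplex $C\{i\geq 0,\ j<k\}$, annihilated by $U^{g+k}$. Combining the two cases gives $U^{g-|k|}\cdot\mathbf{A}^{red}_k=0$, stronger than needed, and then the direct sum over $n$ gives the corollary. So your ``bottom $g$'' region is the right picture, but without the correction-by-a-tower-class step and the sign-dependent choice between $v_k$ and $h_k$, the homology-level statement does not follow.
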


\subsection{Results}
The goal of this section is to prove Theorem 3, but first we must prove some preliminary results about the $\mathbf{v_k}$ and $\mathbf{h_k}$ maps, and subsequently, $V_k$ and $H_k$. 

\begin{lemma}
Let $K$ be a knot in $S^3$ with genus $g$. Then $H_i \leq g$ when $i \leq g$ and $V_i \leq g$ when $i \geq 0$. 
\label{VH}
\end{lemma}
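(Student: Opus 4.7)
The plan is to bootstrap both inequalities from the vanishing property $V_g=0$ (equation \eqref{EQ2}) using only the listed algebraic relations among $V_k$ and $H_k$; no further Floer-theoretic input is required.

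First I would compute the single key value $H_g$. By the formula $H_k = V_k + k$ from equation \eqref{EQ1} and the fact that $V_g = 0$, we get
\[
H_g = V_g + g = g.
\]
Since $H_k$ is nondecreasing in $k$ (the second half of \eqref{EQ3}), this immediately yields $H_i \leq H_g = g$ for all $i \leq g$, which is the first inequality.

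For the bound on $V_i$, I would combine the symmetry $V_k = H_{-k}$ with the computation above to obtain $V_{-g} = H_g = g$. Then, since $V_k$ is nonincreasing in $k$ (the first half of \eqref{EQ3}), it follows that $V_i \leq V_{-g} = g$ for all $i \geq -g$; in particular, $V_i \leq g$ whenever $i \geq 0$, giving the second inequality.

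There is no real obstacle here — the lemma is a short bookkeeping argument, and the only thing to be careful about is the direction of monotonicity for $V$ versus $H$ and the sign in the symmetry $V_k = H_{-k}$. One could alternatively derive the $V$-bound directly from $V_k - 1 \leq V_{k+1}$ by iterating from $V_g = 0$ down to $V_0 \leq g$, but the argument via $V_{-g} = H_g$ is cleaner and highlights the duality between the two sequences.
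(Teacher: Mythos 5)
Your proposal is correct and follows essentially the same argument as the paper: both compute $H_g = V_g + g = g$ from \eqref{EQ1} and \eqref{EQ2} and then invoke the monotonicity in \eqref{EQ3}. The only (harmless) difference is in the $V$-bound, where the paper passes through $V_0 = H_0 \leq g$ while you use the symmetry $V_{-g} = H_g = g$; both are one-line consequences of the listed relations and yield the same conclusion.
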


\begin{proof}
By \eqref{EQ1} $H_g = V_g+g$ and by \eqref{EQ2} $V_g = 0$. Therefore $H_g = g$. By \eqref{EQ3} we have that $H_i \leq g$ for $i \leq g$. Since $V_0 = H_0$ by \eqref{EQ1}, and $H_0 \leq g$, we know that $V_0 \leq g$. Then by \eqref{EQ3} we have that $V_i \leq g$ when $i \geq 0$. 
\end{proof}

\begin{lemma}
For a genus 1 knot $K$ we have that $U\cdot HF_{red}(S^3_{\pm 1}(K)) = 0$.
\label{onesurg}
\end{lemma}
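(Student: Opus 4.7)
The plan is to apply the mapping cone formula to $\pm 1$ surgery on the genus-one knot $K$. For $+1$ surgery the complex $\mathcal{A}^+$ consists of one copy of each $A_k^+$ with $k \in \Z$, and because $v_k$ is an isomorphism for $k \geq 1 = g(K)$ while $h_k$ is an isomorphism for $k \leq -1 = -g(K)$, the truncated mapping cone reduces to the three nontrivial rows $A_{-1}^+, A_0^+, A_1^+$ together with three copies of $B^+$. In particular $\mathbf{A}_k^{red}=0$ whenever $|k|\geq 1$, so $\mathbb{A}^{red}$ is a single copy of $\mathbf{A}_0^{red}$, and Corollary~\ref{genusthm} applied with $g=1$ tells us that this group is killed by $U$.

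Next I would exploit the $\F[U]$-module decomposition $\mathbf{A}_k^+ = \mathbf{A}_k^T \oplus \mathbf{A}_k^{red}$ together with the fact that $\mathbf{B}^+ = \T$ has no reduced summand. This lets me write $\mathbf{D}^+ = \mathbf{D}^T \oplus \mathbf{D}^{red}$; a grading check, combined with the observation that any $\F[U]$-homomorphism from the $U$-torsion module $\mathbf{A}_0^{red}$ into the divisible tower $\mathbf{B}^+$ is highly constrained, should force the restriction of $\mathbf{D}^+$ to $\mathbb{A}^{red}$ to vanish, so that $\mathbb{A}^{red} \subseteq \ker(\mathbf{D}^+)$. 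Since the surgery slope is positive, the corollary quoted in the excerpt gives $HF^+(S^3_{+1}(K)) \cong \ker(\mathbf{D}^+) = \ker(\mathbf{D}^T) \oplus \mathbb{A}^{red}$. The first summand carries the tower $\T$ that every integral homology sphere must have, and the second is exactly $HF_{red}(S^3_{+1}(K))$, yielding
\[
U\cdot HF_{red}(S^3_{+1}(K)) \;=\; U\cdot \mathbf{A}_0^{red} \;=\; 0.
\]

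For the $-1$ case the positivity hypothesis in the corollary fails, so I would either invoke the orientation-reversal identification $S^3_{-1}(K) \cong -S^3_{+1}(\bar K)$ (noting that $\bar K$ is again genus one, and that the standard duality between $HF^+$ of $Y$ and $-Y$ preserves the property of being killed by $U$ on the finite-dimensional reduced summand), or else rerun the mapping cone analysis directly for negative slopes. The main obstacle I anticipate is verifying that the reduced part of $\ker(\mathbf{D}^+)$ really coincides with $\mathbb{A}^{red}$ and is not enlarged by additional finite-dimensional torsion arising from $\ker(\mathbf{D}^T)$. To settle this I would compute $\mathbf{D}^T$ explicitly on the three surviving towers using the genus-one constraint $V_0 = H_0 \in \{0, 1\}$ supplied by Lemma~\ref{VH} and the relations \eqref{EQ1}--\eqref{EQ2}, after which the remainder of the argument reduces to bookkeeping.
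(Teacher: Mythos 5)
Your overall strategy (mapping cone plus Gainullin's Corollary \ref{genusthm}, then the mirror trick $S^3_{-1}(K) = -S^3_{+1}(\overline{K})$ for the negative slope) is the same as the paper's, but the middle step of your argument has a genuine gap. You assert that a grading check, together with the fact that $\mathbf{A}_0^{red}$ is $U$-torsion while $\mathbf{B}^+ \cong \mathcal{T}^+$, forces the restriction of $\mathbf{D}^+$ to $\mathbb{A}^{red}$ to vanish, so that $\ker(\mathbf{D}^+) = \ker(\mathbf{D}^T)\oplus \mathbb{A}^{red}$. This is not forced: $\mathcal{T}^+$ is itself all $U$-torsion (every element is killed by some power of $U$), so there are plenty of nonzero $U$-equivariant, grading-compatible maps from a finite $U$-torsion module into the tower --- for instance, sending a $U$-killed generator to the bottom of the tower. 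Indeed, the paper's own later analysis (Lemmas \ref{lvls1} and \ref{lvls2}) is devoted precisely to constraining $\mathbf{v}_0$ and $\mathbf{h}_0$ on $\mathbf{A}_0^{red}$, the arguments there are nontrivial (they use the isomorphism $\phi$ with $\mathbf{v}_0\circ\phi=\mathbf{h}_0$ and Corollary \ref{genusthm}), and even then the maps are only shown to vanish on levels $i\neq 0$; on level $0$ they can be nonzero. Without knowing $\mathbf{D}^{red}=0$ (or some substitute), your identification of the reduced part of $\ker(\mathbf{D}^+)$ with $\mathbb{A}^{red}$ does not go through: the kernel can contain mixed tower-plus-reduced classes, and your closing ``bookkeeping'' step, which proposes only to compute $\mathbf{D}^T$ on the towers, does not address this.

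The irony is that finishing the truncation you started would have dissolved the problem. For $+1$-surgery on a genus-one knot the cancellation via the isomorphisms $v_k$ ($k\geq 1$) and $h_k$ ($k\leq -1$) removes \emph{every} copy of $B^+$ (your count of ``three copies of $B^+$'' is not the minimal truncation), leaving only $A_0^+$; equivalently, $+1$ is a large surgery slope since $1 \geq 2g(K)-1$. This is exactly the paper's route: $HF^+(S^3_{+1}(K)) \cong H_*(A_0^+)$, so $HF_{red}(S^3_{+1}(K)) \cong \mathbf{A}_0^{red}$, which is annihilated by $U^{g(K)}=U$ by Corollary \ref{genusthm}, with no need to analyze $\mathbf{v}_0$, $\mathbf{h}_0$, or any kernel at all. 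Your treatment of the $-1$ case by passing to the mirror matches the paper and is fine once the $+1$ case is repaired.
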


\begin{proof}
By the mapping cone formula $HF^+(S^3_1(K)) = H_*(A_0^+)$ and by Corollary \ref{genusthm} we have $$U^{g(K)} \cdot H_*(A_0^+) = 0,$$ where $g(K)$ is the genus of $K$, thus $$U \cdot HF_{red}(S^3_1(K)) = 0.$$ We know that $S^3_{1/q}(K) = - S^3_{-1/q}(\overline{K})$, where $\overline{K}$ denotes the the reflection of $K$, \cite{OSSurg}. Thus it follows that $$S^3_{+1}(K) = -S^3_{-1}(\overline{K}).$$ Since $K$ and $\overline{K}$ are both genus 1 knots it follows that $$U \cdot HF_{red}(S^3_{-1}(K)) = 0,$$ by the arguments for +1 surgery.
\end{proof}

Before we proceed, we must introduce the concept of levels, a way to refer to the relative grading of $\mathbf{A}_k^+$ and $\mathbf{B}_k^+$. We define level 0 to contain the bottom of $\mathcal{T}^+$, and those elements that have this same grading, call this grading $g$. Then level 1 will consist of those elements in grading $g+2$, level 2 will consist of those elements in grading $g+4$. Thus in general elements in grading $g + 2k$, will be in level $k$. Elements in different parity of grading will be in half levels. So an element in grading $g+2k+1$ will be in level $\frac{2k+1}{2}$.

\begin{example}
Consider $\mathbf{A}_0^+$ in figure \ref{levelspic}. The $a_i$'s are the vertices in $\mathbf{A}_0^+$, while the other elements are in $\mathbf{A}_0^{red}$. We see that $a_0$ is the bottom of the tower, and thus is in level 0. The only elements in different parity of grading are $w$ and $z$.  We can then say the other elements are in the following levels,
\begin{multicols}{3}
\begin{itemize}
\centering
\item[Level 0:] $a_0, v, y$
\item[Level 1:] $a_1, x$
\item[Level 2:] $a_2$
\item[Level 3:] $a_3,u$
\item[Level $\frac{3}{2}$:] $w$
\item[Level $\frac{5}{2}$:] $z$
\end{itemize}
\end{multicols}

\begin{figure}[h]
\centering
\begin{tikzpicture}[thick,scale=0.75, every node/.style={scale=0.75}]
\node[left] at (-5,-1) {k+6};
\node[left] at (-5,-2) {k+4};
\node[left] at (-5,-3) {k+2};
\node[left] at (-5,-4) {k};
 \draw[->,red]
    (-.3,-.9)[anchor = east] [out=200, in=150] to  (-.3,-1.9)[anchor = east];
\node[left] at (-.5,-1.5) {$U$};
\draw[gray, dashed] (-5,-1)--(5,-1);
\draw[gray, dashed] (-5,-2)--(5,-2);
\draw[gray, dashed] (-5,-3)--(5,-3);
\draw[gray, dashed] (-5,-4)--(5,-4);

\filldraw[black] (0,-4) circle (2pt) node[anchor=south] {$a_0$};
\filldraw[black] (0,-3) circle (2pt) node[anchor=south] {$a_1$};
\filldraw[black] (0,-2) circle (2pt) node[anchor=south] {$a_2$};
\filldraw[black] (0,-1) circle (2pt) node[anchor=south] {$a_3$};

\node[hackennode] (a0) at (0,-.2) {};
\node[hackennode] (a0) at (0,.2) {};
\node[hackennode] (a0) at (0,0) {};

\filldraw[black] (-3,-1) circle (2pt) node[anchor=south] {$u$};
\filldraw[black] (-2,-4) circle (2pt) node[anchor=south] {$v$};
\filldraw[black] (-1,-2.5) circle (2pt) node[anchor=south] {$w$};
\filldraw[black] (1,-3) circle (2pt) node[anchor=south] {$x$};
\filldraw[black] (2,-4) circle (2pt) node[anchor=south] {$y$};
\filldraw[black] (3,-1.5) circle (2pt) node[anchor=south] {$z$};

\node (label) at (0,-4.75) {$\mathbf{A}_0^+$};

\end{tikzpicture}
\caption{Levels of $\mathbf{A}_0^+$}
\label{levelspic}
\end{figure}
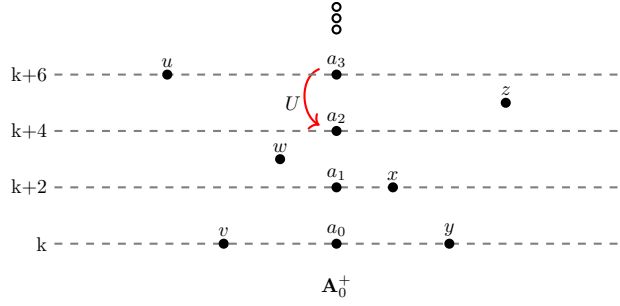

\end{example}

\begin{remark}
Note that linear combinations of elements in half levels will always be sent to 0 under the $U$-action by Corollary \ref{genusthm}. Thus elements in odd levels can never contribute to $U$ of $HF_{red}$ being nonzero, so we need not discuss them to prove Theorem \ref{gen1thm}. 
\end{remark}

Now we use Lemma \ref{onesurg} to restrict the allowable maps for $\mathbf{v}_0$ and $\mathbf{h}_0$. Consider figure \ref{-1surg}, which depicts the truncated mapping cone for -1 surgery on a genus 1 knot. This figure shows the labeling used to refer to elements of $\mathbf{A}_0^T$, $\mathbf{B}_0^T$ and $\mathbf{B}_{-1}^T$, where the subscript of the element refers to the element it is in. 

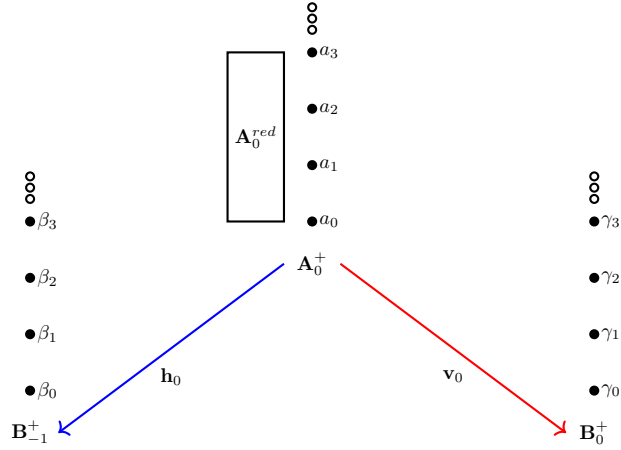
\begin{figure}[h]
\centering
\begin{tikzpicture}[thick,scale=0.75, every node/.style={scale=0.75}]
\node[hackennode] (a0) at (0,4.4) {};
\node[hackennode] (a0) at (0,4.6) {};
\node[hackennode] (a0) at (0,4.8) {};
\node (label) at (0,.25) {$\mathbf{A}_0^+$};
\filldraw[black] (0,1) circle (2pt) node[anchor=west] {$a_0$};
\filldraw[black] (0,2) circle (2pt) node[anchor=west] {$a_1$};
\filldraw[black] (0,3) circle (2pt) node[anchor=west] {$a_2$};
\filldraw[black] (0,4) circle (2pt) node[anchor=west] {$a_3$};
\draw (-.5,1) rectangle (-1.5,4);
\node (label) at (-1,2.5) {$\mathbf{A}_0^{red}$};

\node[hackennode] (a0) at (-5,1.8) {};
\node[hackennode] (a0) at (-5,1.6) {};
\node[hackennode] (a0) at (-5,1.4) {};
\node (label) at (-5,-2.75) {$\mathbf{B}_{-1}^+$};
\filldraw[black] (-5,-2) circle (2pt) node[anchor=west] {$\beta_0$};
\filldraw[black] (-5,-1) circle (2pt) node[anchor=west] {$\beta_1$};
\filldraw[black] (-5,0) circle (2pt) node[anchor=west] {$\beta_2$};
\filldraw[black] (-5,1) circle (2pt) node[anchor=west] {$\beta_3$};
%\draw (-5.5,-2) rectangle (-6.5,1);
%\node (label) at (-6,-.5) {$\mathbf{B}_0^{red}$};

\node[hackennode] (a0) at (5,1.8) {};
\node[hackennode] (a0) at (5,1.6) {};
\node[hackennode] (a0) at (5,1.4) {};
\node (label) at (5,-2.75) {$\mathbf{B}_0^+$};
\filldraw[black] (5,-2) circle (2pt) node[anchor=west] {$\gamma_0$};
\filldraw[black] (5,-1) circle (2pt) node[anchor=west] {$\gamma_1$};
\filldraw[black] (5,0) circle (2pt) node[anchor=west] {$\gamma_2$};
\filldraw[black] (5,1) circle (2pt) node[anchor=west] {$\gamma_3$};
%\draw (5.6,-2) rectangle (6.6,1);
%\node (label) at (6.1,-.5) {$\mathbf{B}_0^{red}$};

\draw [blue, ->] (-.5,.25) -- (-4.5,-2.75); 
\node (label) at (-2.5,-1.75) {$\mathbf{h}_0$};
\draw [red, ->] (.5,.25) -- (4.5,-2.75); 
\node (label) at (2.5,-1.75) {$\mathbf{v}_0$};

\end{tikzpicture}
\caption{-1 Surgery Mapping Cone}
\label{-1surg}
\end{figure}

\begin{remark}
Note we have only drawn a box to represent $\mathbf{A}_0^{red}$, and neither of the $B^+$ complexes have a reduced part as they represent $CF^+(S^3)$.
\end{remark}

\begin{lemma}
Let $x_i$ be in level $i$ of $\mathbf{A}_0^{red}$. Then $\mathbf{v_0}(x_i) = 0 = \mathbf{h_0}(x_i)$ for $i \geq V_0 + 1$, or $i < 0$.
\label{lvls1}
\end{lemma}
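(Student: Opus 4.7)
The plan is to combine two inputs: the constraint from Lemma~\ref{onesurg}, and the $U$-equivariance of $\mathbf{v}_0$ and $\mathbf{h}_0$. Since $K$ has genus one, the mapping cone for $+1$-surgery collapses to $HF^+(S^3_1(K)) \cong H_*(A_0^+) = \mathbf{A}_0^+$ as graded $\F[U]$-modules, as was used in the proof of Lemma~\ref{onesurg}. Combined with Lemma~\ref{onesurg}, this yields $U \cdot \mathbf{A}_0^{red} = 0$, so every class $x_i \in \mathbf{A}_0^{red}$ is annihilated by $U$.

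Next I analyze gradings. Since $\mathbf{v}_0$ restricted to $\mathbf{A}_0^T$ is multiplication by $U^{V_0}$ and is a degree-zero chain map, the bottom of the target tower $\mathbf{B}_0^T$ sits at the same grading as level $V_0$ of $\mathbf{A}_0^+$. The analogous statement for $\mathbf{h}_0$ and $\mathbf{B}_{-1}^T$ follows from $H_0 = V_0$ in~\eqref{EQ1}. In particular, both $\mathbf{B}_0^+$ and $\mathbf{B}_{-1}^+$ are supported in gradings $\geq g + 2V_0$, where $g$ is the grading of level $0$ of $\mathbf{A}_0^+$.

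For $i < 0$, the grading $g + 2i$ of $x_i$ lies strictly below $g + 2V_0$ (since $V_0 \geq 0$), so $\mathbf{v}_0(x_i) = 0 = \mathbf{h}_0(x_i)$ by grading considerations alone. For $i \geq V_0 + 1$, $U$-equivariance gives $U \cdot \mathbf{v}_0(x_i) = \mathbf{v}_0(U x_i) = 0$, so $\mathbf{v}_0(x_i)$ lies in $\ker(U) \subset \mathcal{T}^+$, which is one-dimensional and sits at the bottom grading $g + 2V_0$. But $\mathbf{v}_0(x_i)$ has the same grading as $x_i$, namely $g + 2i \geq g + 2V_0 + 2$, strictly above the bottom, so $\mathbf{v}_0(x_i)$ cannot lie in $\ker(U)$ unless it equals zero. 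The identical argument handles $\mathbf{h}_0$. The main subtlety is orienting the grading shift so that level $V_0$ (and not $-V_0$) is the pivotal level, which is forced by the convention that $\mathbf{v}_k$ and $\mathbf{h}_k$ are degree-zero chain maps in the mapping cone.
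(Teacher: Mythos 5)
Your proof is correct and follows essentially the same route as the paper: the $i<0$ case is ruled out by the grading/level restriction on $\mathbf{v}_0$ and $\mathbf{h}_0$ (which shift levels by $V_0=H_0$), and the $i\geq V_0+1$ case by $U$-equivariance together with $U\cdot\mathbf{A}_0^{red}=0$, your phrasing via $\ker(U)\subset\mathcal{T}^+$ being concentrated at the bottom being just a repackaging of the paper's contradiction with $U\cdot\gamma_{i-V_0}\neq 0$. Your derivation of $U\cdot\mathbf{A}_0^{red}=0$ from Lemma \ref{onesurg} and the large-surgery identification is equivalent to the paper's direct appeal to Corollary \ref{genusthm} with $g=1$.
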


\begin{proof}
We know that $\mathbf{v_0}$ must send $x_i$ to an element in level $i-(V_0)$ of $B_0^+$, due to the grading restriction on $\mathbf{v}_0$. Therefore if $i<0$, $\mathbf{v}_0(x_i) = 0$ as there are no elements in $B_0^+$ at level $i$ for negative $i$. Now consider $i \geq V_0 + 1$. Assume indirectly that $\mathbf{v_0}(x_i) = \gamma_{i-(V_0)}$. Then we have that $$U(\mathbf{v_0}(x_i)) = U\cdot \gamma_{(i-V_0)} = \gamma_{(i - V_0- 1)}.$$ 
But by the $U$-equivariance of $\mathbf{v}_0$ we also have 
$$U(\mathbf{v}_0(x_i)) = \mathbf{v}_0(U\cdot x_i) = \mathbf{v}_0(0) =  0.$$
Thus we have a contradiction so $\mathbf{v}_0(x_i) = 0.$ It can be shown that $\mathbf{h}_0(x_i) = 0$ using the same arguments. 
\end{proof}

\begin{lemma}
Let $V_0 = H_0 = 1$ and $x$ be in level $1$ of $\mathbf{A}_0^{red}$.  Then $\mathbf{v_0}(x) = 0 = \mathbf{h_0}(x)$. 
\label{lvls2}
\end{lemma}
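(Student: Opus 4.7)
The plan is to argue by contradiction, producing a class in $HF_{red}(S^3_{-1}(K))$ that is not killed by $U$, contradicting Lemma~\ref{onesurg}. I will work in the truncated mapping cone of Figure~\ref{-1surg}, whose chain-level differential (in the reduced model) sends $(\alpha, \beta_{-1}, \beta_0)$ to $(0, \mathbf{h}_0(\alpha), \mathbf{v}_0(\alpha))$. A first reduction uses grading: since $\mathbf{v}_0$ and $\mathbf{h}_0$ preserve absolute grading and $V_0 = H_0 = 1$ accounts for the absolute-grading shift between $\mathbf{A}_0^+$ and the targets, any level-$1$ element of $\mathbf{A}_0^+$ is sent to level $0$ of each $\mathbf{B}^+$, forcing $\mathbf{v}_0(x) \in \{0, \gamma_0\}$ and $\mathbf{h}_0(x) \in \{0, \beta_0\}$. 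I will also invoke Corollary~\ref{genusthm}, which for $g(K)=1$ yields $U \cdot \mathbf{A}_0^{red} = 0$, and in particular $Ux = 0$. Under the contradiction hypothesis that not both maps vanish, three cases remain.

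In Case~1, where $\mathbf{v}_0(x) = \gamma_0$ and $\mathbf{h}_0(x) = \beta_0$, I will use that $V_0 = H_0 = 1$ also forces $\mathbf{v}_0(a_1) = \gamma_0$ and $\mathbf{h}_0(a_1) = \beta_0$ on the tower, so $(x + a_1, 0, 0)$ becomes a cycle in $\mathbb{X}^+$. Because $Ux = 0$ and $U^2 a_1 = 0$, this cycle is $U^2$-torsion and hence represents a class in $HF_{red}$. Multiplying by $U$ produces $[(a_0, 0, 0)]$, which is nonzero in $HF^+$ because the first coordinate of every boundary vanishes (the differential on $\mathbf{A}_0^+$ is trivial) while $a_0 \neq 0$. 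This gives $U \cdot [(x+a_1,0,0)] \neq 0$ in $HF_{red}$, contradicting Lemma~\ref{onesurg}.

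For Cases~2 and~3, suppose $\mathbf{v}_0(x) = \gamma_0$ and $\mathbf{h}_0(x) = 0$; the other case is symmetric under the swap $\mathbf{v}_0 \leftrightarrow \mathbf{h}_0$ and $\gamma_j \leftrightarrow \beta_j$. The identity $\partial(x, 0, 0) = (0, 0, \gamma_0)$ makes $[(0, 0, \gamma_0)]$ vanish in $HF^+$. The main obstacle is then to show that $[(0, 0, \gamma_1)]$ and $[(0, 0, \gamma_2)]$ \emph{do not} vanish: writing either as a boundary $\partial(\alpha, \cdot, \cdot)$ would require $\alpha \in \mathbf{A}_0^+$ at level $2$ or $3$ with $\mathbf{v}_0(\alpha)$ equal to the prescribed $\gamma_k$ and $\mathbf{h}_0(\alpha) = 0$. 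Lemma~\ref{lvls1} kills every reduced contribution at those levels (which are $\geq V_0 + 1 = 2$), while the only tower candidate $a_k$ satisfies $\mathbf{h}_0(a_k) = \beta_{k-1} \neq 0$, so no such $\alpha$ exists. With these non-vanishings in hand, $[(0, 0, \gamma_2)]$ is $U^2$-torsion (since $U^2\gamma_2 = \gamma_0$ is now a boundary) and therefore lies in $HF_{red}$, yet $U[(0, 0, \gamma_2)] = [(0, 0, \gamma_1)] \neq 0$, again contradicting Lemma~\ref{onesurg}. The bookkeeping in these non-vanishing arguments—tracking exactly which tower element on $\mathbf{A}_0^+$ can hit $\gamma_k$ and showing that Lemma~\ref{lvls1} excludes the reduced compensation—is the step that requires care; everything else is linear algebra in the mapping cone.
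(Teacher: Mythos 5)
Your Case 1 is essentially the paper's Case 1 (the same cycle $x+a_1$ with $U(x+a_1)=a_0$), but your Cases 2 and 3 contain a genuine gap, and it is exactly at the step you flag as "linear algebra." The inference ``$[(0,0,\gamma_2)]$ is $U^2$-torsion and therefore lies in $HF_{red}$'' is false: every element of the tower $\mathcal{T}^+$ is also killed by a power of $U$, and what makes a class survive to $HF_{red}=HF^+/\,U^N\!\cdot\! HF^+$ ($N\gg 0$) is failing to be in the image of all large powers of $U$, not being $U$-torsion. The classes you use are precisely the tower of $HF^+(S^3_{-1}(K))$: for every $N$ the element $(0,0,\gamma_{k+N})$ is a cycle in your reduced model and $U^N[(0,0,\gamma_{k+N})]=[(0,0,\gamma_k)]$, so $[(0,0,\gamma_1)]$ and $[(0,0,\gamma_2)]$ are $U$-divisible to all orders and map to zero in $HF_{red}$. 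Consequently ``$U[(0,0,\gamma_2)]=[(0,0,\gamma_1)]\neq 0$ in $HF^+$'' is perfectly consistent with Lemma \ref{onesurg}; under the hypothesis $\mathbf{v}_0(x)=\gamma_0$, $\mathbf{h}_0(x)=0$ the only effect you have detected is that the tower of the cone begins at $[(0,0,\gamma_1)]$ rather than $[(0,0,\gamma_0)]$, which is not by itself a contradiction. Your nonvanishing computations for those classes are correct but do not bear on $HF_{red}$, so Cases 2 and 3 are not closed.

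The ingredient you are missing is the one the paper uses for exactly these mixed cases: the $U$-equivariant, grading-preserving isomorphism $\phi\colon \mathbf{A}_0^+\to\mathbf{A}_0^+$ with $\mathbf{v}_0\circ\phi=\mathbf{h}_0$. From $x$ with $\mathbf{v}_0(x)=\gamma_0$, $\mathbf{h}_0(x)=0$ one produces a second non-tower element on which the roles of $\mathbf{v}_0$ and $\mathbf{h}_0$ are exchanged, and then $a_1+x+\phi(x)$ is a cycle supported on the $\mathbf{A}_0$-side with $U\bigl(a_1+x+\phi(x)\bigr)=a_0$, so one concludes as in Case 1 (contradicting Corollary \ref{genusthm}/Lemma \ref{onesurg}). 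The reason this works on the $\mathbf{A}_0$-side and not on the $\mathbf{B}$-side is that boundaries in the truncated cone have vanishing $\mathbf{A}_0$-component, and a class whose $\mathbf{A}_0$-component is $a_0$ cannot lie in the image of $U^N$ for large $N$ (that would require $a_N+r\in\ker\mathbf{D}^+$, impossible since $\mathbf{h}_0(a_N)=\beta_{N-1}$ cannot be cancelled by reduced terms, which Lemma \ref{lvls1} confines to low levels). This last observation is also the justification you should add in your Case 1, where you pass from ``nonzero in $HF^+$'' to ``nonzero in $HF_{red}$''; there the repair is available, but in your Cases 2 and 3 no analogous repair exists.
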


\begin{proof} Due to the grading restrictions on $\mathbf{v}_0$ and $\mathbf{h}_0$ we know that $\mathbf{v}_0(x) = 0$ or $\mathbf{v}_0(x) = \gamma_0$ and similarly $\mathbf{h}_0(x) = 0$ or $\mathbf{h}_0(x) = \beta_0$. Thus the following three cases are the only possible maps for $\mathbf{v}_0$ and $\mathbf{h}_0$ besides $\mathbf{v_0}(x) = 0 = \mathbf{h_0}(x)$. We will show that in each case we get a contradiction, and thus $\mathbf{v_0}(x) = 0 = \mathbf{h_0}(x)$ must hold.
    \paragraph*{Case 1:} Let $\mathbf{h}_0(x) = \beta_0$, $\mathbf{v}_0(x) = \gamma_0$. Here we have $$\mathbf{D}^+(x+a_1) = 2\beta_0 + 2 \gamma_0 = 0,$$ thus $x + a_1 \in A_0^{red}$. But $U\cdot (x + a_1) = a_0$, which contradicts Corollary \ref{genusthm}.

    \paragraph*{Case 2:} Let $\mathbf{h}_0(x) = \beta_0$, $\mathbf{v}_0(x) = 0$. Recall there exists an isomorphism $\phi \colon \mathbf{A}_0^+ \to \mathbf{A}_0^+$ that is $U$-equivariant, grading preserving, and satisfies $\mathbf{v}_0 \circ \phi = \mathbf{h}_0.$ Therefore $\mathbf{v}_0(\phi(x)) = \gamma_0$, and $\mathbf{h}_0(\phi(x)) = 0$. It follows that $\phi(x) \neq x$, and $U \cdot\phi(x) = 0$ by the $U$-equivariance of $\phi$. Thus $\phi(x)$ must not be in the tower. Then we have $$\mathbf{D}^+(a_1 + x + \phi(x)) = \beta_0 + \gamma_0 + \beta_0 + \gamma_0 = 0,$$ 
    thus $a_1 + x + \phi(x) \in A_0^{red},$ but $U \cdot (a_1 + x + \phi(x) \neq 0$, so this contradicts Corollary \ref{genusthm}.

    \paragraph*{Case 3:} Let $\mathbf{h}_0(x) = 0$, $\mathbf{v}_0 = \gamma_0$. This is argued similarly to case 2.

Thus we have shown that the only allowable maps for $\mathbf{v}_0$ and $\mathbf{h}_0$ must satisfy $\mathbf{v}_0(x) = 0 = \mathbf{h}_0(x)$, for any element $x$ in level 1 of $\mathbf{A}_0^{red}$. 
\end{proof}

\begin{prop}
Let $K$ be a genus 1 knot $K$ in $S^3$, and let $x$ be in level $i$ of $\mathbf{A}_0^{red}$. Then $\mathbf{v}_0(x_i) = 0 =\mathbf{h}_0(x_i)$ for $i \neq 0$.
\end{prop}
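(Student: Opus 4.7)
The plan is to combine Lemma~\ref{VH} with Lemmas~\ref{lvls1} and \ref{lvls2} in a short case analysis driven by the value of $V_0$. Since $K$ has genus one, Lemma~\ref{VH} gives $V_0 \leq 1$, and the identity $H_0 = V_0 + 0$ from \eqref{EQ1} forces $V_0 = H_0$. Combined with the non-negativity of $V_0$, this leaves only the two possibilities $V_0 = H_0 \in \{0,1\}$, and in each case the thresholds appearing in the two preceding lemmas line up exactly to cover every nonzero level.

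First I would dispose of $V_0 = 0$: for any integer $i \neq 0$, either $i \geq 1 = V_0 + 1$ or $i < 0$, so Lemma~\ref{lvls1} applies directly and yields $\mathbf{v}_0(x_i) = 0 = \mathbf{h}_0(x_i)$. Next, for $V_0 = 1$, Lemma~\ref{lvls1} handles all $i \neq 0$ with $i \geq 2$ or $i < 0$, leaving only the level $i = 1$. But the hypothesis $V_0 = H_0 = 1$ is exactly what Lemma~\ref{lvls2} requires, so it supplies $\mathbf{v}_0(x) = 0 = \mathbf{h}_0(x)$ in that remaining case.

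If the statement is also intended to cover half-integer levels, a short grading-parity argument suffices. Since $\mathbf{v}_0$ and $\mathbf{h}_0$ shift grading by the fixed even amount $-2V_0$, and since $B^+ \cong CF^+(S^3) \cong \mathcal{T}^+$ has generators only at a single parity of grading, any element at a half-integer level of $\mathbf{A}_0^{red}$ has no possible nonzero image in $B^+$, forcing $\mathbf{v}_0(x) = 0 = \mathbf{h}_0(x)$.

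There is no real obstacle here; the substantive work was absorbed into Lemmas~\ref{lvls1} and \ref{lvls2}, and the present proposition simply records that those two results cover every nonzero level once $V_0$ is pinned down by the genus one hypothesis. The only subtle bookkeeping is to verify that the threshold $V_0 + 1$ from Lemma~\ref{lvls1} plus the special case $V_0 = H_0 = 1$ from Lemma~\ref{lvls2} leaves no gap, which is precisely what the dichotomy $V_0 \in \{0,1\}$ ensures.
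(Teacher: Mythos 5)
Your argument is correct and is essentially the paper's own proof: the proposition is stated there as following directly from Lemmas \ref{lvls1} and \ref{lvls2}, and your case split $V_0 \in \{0,1\}$ (via Lemma \ref{VH} and \eqref{EQ1}) just makes explicit why those two lemmas leave no gap.
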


\begin{proof}
This follows directly from Lemmas \ref{lvls1} and \ref{lvls2}. 
\end{proof}

Given these restrictions, we are now ready to prove Theorem \ref{MCG1}. Figure \ref{1nsurg} depicts the truncated mapping cone for $\frac{1}{n}$ surgery of genus 1 knots for positive $n$. We consider the element $a_{(0,i)}^j$ to be in level $j$ of the $i$\textsuperscript{th} copy of $\mathbf{A}_0^+$, denoted $\mathbf{A}_{(0,i)}^{red}$. We have a similar notation for $b_{(0,k)}^l$ in level $l$ of  $\mathbf{B}_{0,k}$.

\begin{figure}[h]
\centering
\begin{tikzpicture}[thick,scale=0.75, every node/.style={scale=0.75}]
\node[hackennode] (a0) at (-5,4.4) {};
\node[hackennode] (a0) at (-5,4.6) {};
\node[hackennode] (a0) at (-5,4.8) {};
\node (label) at (-5,.25) {$\mathbf{A}_{(0,1)}^+$};
\filldraw[black] (-5,1) circle (2pt) node[anchor=west] {$a_{(0,1)}^0$};
\filldraw[black] (-5,2) circle (2pt) node[anchor=west] {$a_{(0,1)}^1$};
\filldraw[black] (-5,3) circle (2pt) node[anchor=west] {$a_{(0,1)}^2$};
\filldraw[black] (-5,4) circle (2pt) node[anchor=west] {$a_{(0,1)}^3$};
\draw (-5.5,1) rectangle (-6.5,4);
\node (label) at (-6,2.5) {$\mathbf{A}_{(0,1)}^{red}$};

\draw [blue, ->] (-5,0) -- (-4,-1); 
\node (label) at (-4.75,-.5) {$\mathbf{h}_0$};
\draw [blue, ->] (-2,0) -- (-1,-1); 
\node (label) at (-1.75,-.5) {$\mathbf{h}_0$};
\draw [blue, ->] (1,0) -- (2,-1); 
\node (label) at (1.25,-.5) {$\mathbf{h}_0$};
\draw [blue, ->] (6,0) -- (7,-1); 
\node (label) at (6.25,-.5) {$\mathbf{h}_0$};

\draw [red, ->] (-2,0) -- (-3,-1); 
\node (label) at (-2.8,-.5) {$\mathbf{v}_0$};
\draw [red, ->] (1,0) -- (0,-1); 
\node (label) at (.2,-.5) {$\mathbf{v}_0$};
\draw [red, ->] (9,0) -- (8,-1); 
\node (label) at (8.2,-.5) {$\mathbf{v}_0$};

\node[hackennode] (a0) at (-2,4.4) {};
\node[hackennode] (a0) at (-2,4.6) {};
\node[hackennode] (a0) at (-2,4.8) {};
\node (label) at (-2,.25) {$\mathbf{A}_{(0,2)}^+$};
\filldraw[black] (-2,1) circle (2pt) node[anchor=west] {$a_{(0,2)}^0$};
\filldraw[black] (-2,2) circle (2pt) node[anchor=west] {$a_{(0,2)}^1$};
\filldraw[black] (-2,3) circle (2pt) node[anchor=west] {$a_{(0,2)}^2$};
\filldraw[black] (-2,4) circle (2pt) node[anchor=west] {$a_{(0,2)}^3$};
\draw (-2.5,1) rectangle (-3.5,4);
\node (label) at (-3,2.5) {$\mathbf{A}_{(0,2)}^{red}$};

\node[hackennode] (a0) at (1,4.4) {};
\node[hackennode] (a0) at (1,4.6) {};
\node[hackennode] (a0) at (1,4.8) {};
\node (label) at (1,.25) {$\mathbf{A}_{(0,3)}^+$};
\filldraw[black] (1,1) circle (2pt) node[anchor=west] {$a_{(0,3)}^0$};
\filldraw[black] (1,2) circle (2pt) node[anchor=west] {$a_{(0,3)}^1$};
\filldraw[black] (1,3) circle (2pt) node[anchor=west] {$a_{(0,3)}^2$};
\filldraw[black] (1,4) circle (2pt) node[anchor=west] {$a_{(0,3)}^3$};
\draw (.5,1) rectangle (-.5,4);
\node (label) at (0,2.5) {$\mathbf{A}_{(0,3)}^{red}$};

\node[hackennode] (dot) at (3.6,2.5) {};
\node[hackennode] (dot) at (3.8,2.5) {};
\node[hackennode] (dot) at (4,2.5) {};

\node[hackennode] (a0) at (6,4.4) {};
\node[hackennode] (a0) at (6,4.6) {};
\node[hackennode] (a0) at (6,4.8) {};
\node (label) at (6,.25) {$\mathbf{A}_{(0,n-1)}^+$};
\filldraw[black] (6,1) circle (2pt) node[anchor=west] {$a_{(0,n-1)}^0$};
\filldraw[black] (6,2) circle (2pt) node[anchor=west] {$a_{(0,n-1)}^1$};
\filldraw[black] (6,3) circle (2pt) node[anchor=west] {$a_{(0,n-1)}^2$};
\filldraw[black] (6,4) circle (2pt) node[anchor=west] {$a_{(0,n-1)}^3$};
\draw (5.7,1) rectangle (4.3,4);
\node (label) at (5,2.5) {$\mathbf{A}_{(0,n-1)}^{red}$};

\node[hackennode] (a0) at (9,4.4) {};
\node[hackennode] (a0) at (9,4.6) {};
\node[hackennode] (a0) at (9,4.8) {};
\node (label) at (9,.25) {$\mathbf{A}_{(0,n)}^+$};
\filldraw[black] (9,1) circle (2pt) node[anchor=west] {$a_{(0,n)}^0$};
\filldraw[black] (9,2) circle (2pt) node[anchor=west] {$a_{(0,n)}^1$};
\filldraw[black] (9,3) circle (2pt) node[anchor=west] {$a_{(0,n)}^2$};
\filldraw[black] (9,4) circle (2pt) node[anchor=west] {$a_{(0,n)}^3$};
\draw (8.5,1) rectangle (7.5,4);
\node (label) at (8,2.5) {$\mathbf{A}_{(0,n)}^{red}$};

\node[hackennode] (a0) at (-3.5,-2.2) {};
\node[hackennode] (a0) at (-3.5,-2.4) {};
\node[hackennode] (a0) at (-3.5,.-2.6) {};
\node (label) at (-3.5,-1.5) {$\mathbf{B}_{(0,1)}^+$};
\filldraw[black] (-3.5,-6) circle (2pt) node[anchor=west] {$b_{(0,1)}^0$};
\filldraw[black] (-3.5,-5) circle (2pt) node[anchor=west] {$b_{(0,1)}^1$};
\filldraw[black] (-3.5,-4) circle (2pt) node[anchor=west] {$b_{(0,1)}^2$};
\filldraw[black] (-3.5,-3) circle (2pt) node[anchor=west] {$b_{(0,1)}^3$};
\draw (-4,-6) rectangle (-5,-3);
\node (label) at (-4.5,-4.5) {$\mathbf{B}_{(0,1)}^{red}$};

\node[hackennode] (a0) at (-.5,-2.2) {};
\node[hackennode] (a0) at (-.5,-2.4) {};
\node[hackennode] (a0) at (-.5,.-2.6) {};
\node (label) at (-.5,-1.5) {$\mathbf{B}_{(0,2)}^+$};
\filldraw[black] (-.5,-6) circle (2pt) node[anchor=west] {$b_{(0,2)}^0$};
\filldraw[black] (-.5,-5) circle (2pt) node[anchor=west] {$b_{(0,2)}^1$};
\filldraw[black] (-.5,-4) circle (2pt) node[anchor=west] {$b_{(0,2)}^2$};
\filldraw[black] (-.5,-3) circle (2pt) node[anchor=west] {$b_{(0,2)}^3$};
\draw (-1,-6) rectangle (-2,-3);
\node (label) at (-1.5,-4.5) {$\mathbf{B}_{(0,2)}^{red}$};

\node[hackennode] (a0) at (2.5,-2.2) {};
\node[hackennode] (a0) at (2.5,-2.4) {};
\node[hackennode] (a0) at (2.5,.-2.6) {};
\node (label) at (2.5,-1.5) {$\mathbf{B}_{(0,3)}^+$};
\filldraw[black] (2.5,-6) circle (2pt) node[anchor=west] {$b_{(0,3)}^0$};
\filldraw[black] (2.5,-5) circle (2pt) node[anchor=west] {$b_{(0,3)}^1$};
\filldraw[black] (2.5,-4) circle (2pt) node[anchor=west] {$b_{(0,3)}^2$};
\filldraw[black] (2.5,-3) circle (2pt) node[anchor=west] {$b_{(0,3)}^3$};
\draw (2,-6) rectangle (1,-3);
\node (label) at (1.5,-4.5) {$\mathbf{B}_{(0,3)}^{red}$};

\node[hackennode] (dot) at (5.6,-4.5) {};
\node[hackennode] (dot) at (5.4,-4.5) {};
\node[hackennode] (dot) at (5.2,-4.5) {};

\node[hackennode] (a0) at (7.5,-2.2) {};
\node[hackennode] (a0) at (7.5,-2.4) {};
\node[hackennode] (a0) at (7.5,.-2.6) {};
\node (label) at (7.5,-1.5) {$\mathbf{B}_{(0,n-1)}^+$};
\filldraw[black] (7.5,-6) circle (2pt) node[anchor=west] {$b_{(0,n-1)}^0$};
\filldraw[black] (7.5,-5) circle (2pt) node[anchor=west] {$b_{(0,n-1)}^1$};
\filldraw[black] (7.5,-4) circle (2pt) node[anchor=west] {$b_{(0,n-1)}^2$};
\filldraw[black] (7.5,-3) circle (2pt) node[anchor=west] {$b_{(0,n-1)}^3$};
\draw (7.2,-6) rectangle (5.8,-3);
\node (label) at (6.5,-4.5) {$\mathbf{B}_{(0,n-1)}^{red}$};
\end{tikzpicture}
\caption{$\frac{1}{n}$ surgery mapping cone for a genus 1 knot}
\label{1nsurg}
\end{figure}
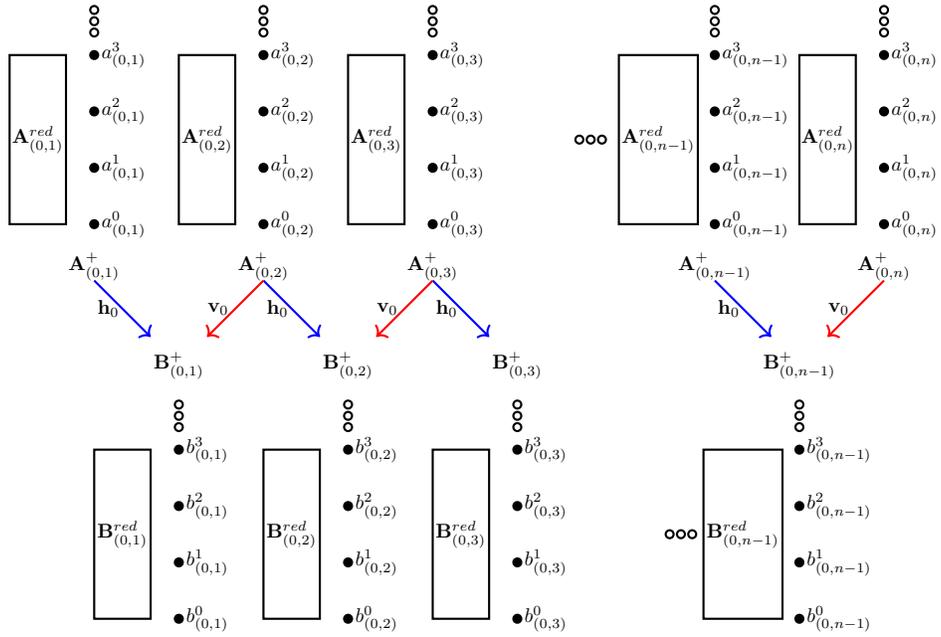

Now we are ready to prove our main result for genus 1 knots. 

\begin{proof}[Proof of Theorem \ref{MCG1}] By Proposition \ref{maprest}, the only elements in $\ker(\mathbf{D}^+_{1/n})$  consists of all elements of $\mathbf{A}_{(0,i)}^{red}$, elements of the form $a_{(0,1)}^i + a_{(0,2)}^i + \cdots + a_{(0,n)}^i$ and sums of these elements. We know that $a_{(0,1)}^i + a_{(0,2)}^i + \cdots + a_{(0,n)}^i$ is in the tower as $$U^k \left(a_{(0,1)}^{i+k} + a_{(0,2)}^{i+k} + \cdots + a_{(0,n)}^{i+k}\right) = a_{(0,1)}^i + a_{(0,2)}^i + \cdots + a_{(0,n)}^i.$$ Thus $HF_{red}(S^3_{1/n}(K)) = \mathbf{A}_0^{red}$ and so $U\cdot HF_{red}(S^3_{1/n}(K)) = 0$ by Corollary \ref{GainMC}, for positive $n$. \par
Now consider negative surgery. We know that $S^3_{1/n}(K) = -S^3_{-1/n}(\overline{K})$ where $\overline{K}$ is the mirror of $K$. Since $K$ and $\overline{K}$ are both genus 1 knots it follows that $$U \cdot HF_{red}(S^3_{-1/n}(K)) = 0$$ by the arguments for positive $\frac{1}{n}$ surgery above.
\end{proof}

\clearpage

\bibliography{References}

\end{document}